\documentclass[11pt,twoside,reqno]{amsart}
\usepackage{amsmath}
\usepackage{amssymb}
\usepackage{graphicx}
\usepackage{mathrsfs}
\usepackage[mathcal]{eucal}
\usepackage[alphabetic]{amsrefs}
\usepackage{array}
\BibSpec{article}{%
    +{}  {\PrintAuthors}                {author}
    +{,} { \textit}                     {title}
    +{.} { }                            {part}
    +{:} { \textit}                     {subtitle}
    +{,} { \PrintContributions}         {contribution}
    +{.} { \PrintPartials}              {partial}
    +{,} { }                            {journal}
    +{}  { \textbf}                     {volume}
    +{}  { \PrintDatePV}                {date}
    +{,} { \issuetext}                  {number}
    +{,} { \eprintpages}                {pages}
    +{,} { }                            {status}
    +{}  { \parenthesize}               {language}
    +{}  { \PrintTranslation}           {translation}
    +{;} { \PrintReprint}               {reprint}
    +{.} { }                            {note}
    +{.} {}                             {transition}
}

\BibSpec{book}{%
    +{}  {\PrintPrimary}                {transition}
    +{,} { \textit}                     {title}
    +{.} { }                            {part}
    +{:} { \textit}                     {subtitle}
    +{,} { \PrintEdition}               {edition}
    +{}  { \PrintEditorsB}              {editor}
    +{,} { \PrintTranslatorsC}          {translator}
    +{,} { \PrintContributions}         {contribution}
    +{,} { }                            {series}
    +{,} { \voltext}                    {volume}
    +{,} { }                            {publisher}
    +{,} { }                            {organization}
    +{,} { }                            {address}
    +{,} { \PrintDateB}                 {date}
    +{,} { }                            {status}
    +{}  { \parenthesize}               {language}
    +{}  { \PrintTranslation}           {translation}
    +{;} { \PrintReprint}               {reprint}
    +{.} { }                            {note}
    +{.} {}                             {transition}
}

\BibSpec{collection.article}{%
    +{}  {\PrintAuthors}                {author}
    +{,} { \textit}                     {title}
    +{.} { }                            {part}
    +{:} { \textit}                     {subtitle}
    +{,} { \PrintContributions}         {contribution}
    +{,} { \PrintConference}            {conference}
    +{}  {\PrintBook}                   {book}
    +{,} { }                            {booktitle}
    +{,} { \PrintDateB}                 {date}
    +{,} { pp.~}                        {pages}
    +{,} { }                            {status}
    +{}  { \parenthesize}               {language}
    +{}  { \PrintTranslation}           {translation}
    +{;} { \PrintReprint}               {reprint}
    +{.} { }                            {note}
    +{.} {}                             {transition}
}

\usepackage{hyperref}

\newtheorem{Thm}{Theorem}
\newtheorem{Prop}[Thm]{Proposition}
\newtheorem{Lem}[Thm]{Lemma}
\newtheorem{Cor}[Thm]{Corollary}
\numberwithin{equation}{section}
\numberwithin{Thm}{section}

\theoremstyle{definition}
\newtheorem{Def}[Thm]{Definition}
\newtheorem{Ex}[Thm]{Example}
\newtheorem{Rem}[Thm]{Remark}

\def\le{\leqslant}
\def\ge{\geqslant}
\def\C{{\mathbb C}}
\def\R{{\mathbb R}}

\def\Z{{\mathbb Z}}
\def\N{{\mathbb N}}
\def\Re{{\operatorname{Re}}}
\let\parasymbol=\S
\def\secref#1{\parasymbol\ref{#1}}
\def\S{\varSigma}
\def\ssm{\smallsetminus}
\def\d{\mathrm d}

\def\l{\lambda}

\def\f{\varphi}
\def\^#1{\widehat{#1}}

\def\:{\colon}

\def\<{\left<}
\def\>{\right>}
\def\~#1{\tilde #1}

\def\pd#1#2{\frac{\partial #1}{\partial #2}}

\def\Rbar{\overline{\mathbb R}}
\def\argmax{\operatornamewithlimits{argmax}}
\def\B{\mathscr B}
\newcommand{\oplush}{\mathbin{\mathop{\oplus}\limits_{\scriptscriptstyle h}}}
\newcommand{\otimesh}{\mathbin{\mathop{\otimes}\limits_{\scriptscriptstyle h}}}

\def\Sp{\operatorname{Sp}}
\def\GL{\operatorname{GL}}

\def\Wloc{W^-_{\mathrm{loc}}}
\def\Wglo{W^-_{\mathrm{glob}}}

\makeatletter
\newcommand{\idint}{%
  \mathop{%
    \vcenter{%
      \hbox{%
        \ooalign{%
          \hfil\scalebox{1.90}{$\displaystyle\bigcirc$}\hfil\cr
          \hfil\kern1.0pt
\raisebox{.55ex}{$\textstyle\int$}%
\kern.7pt\hfil\cr
        }%
      }%
    }%
  }\displaylimits
  \@ifnextchar_{\idint@sub}{}%
}

\newcommand{\idints}{%
  \mathop{%
    \hbox{%
      \raise0pt\hbox{%
        \scalebox{.7}{%
          \ooalign{%
            \hfil\scalebox{1.90}{$\displaystyle\bigcirc$}\hfil\cr
            \hfil\kern1.0pt
              \raisebox{.55ex}{$\textstyle\int$}%
            \kern.7pt\hfil\cr
          }%
        }%
      }%
    }%
  }\nolimits
  \@ifnextchar_{\idints@sub}{}%
}

\def\idint@sub_#1{%
  _{\raisebox{-5pt}{$\scriptstyle #1$}}%
}

\def\idints@sub_#1{%
  _{\raisebox{-5pt}{$\scriptstyle #1$}}%
}

\makeatother

\begin{document}

\title{Bellman semigroups. Quarante ans d'apr\`es}

\author{S. Yakovenko}

\address{Weizmann Institute of Science\\
Rehovot, Israel}
\email{\textup{\texttt{sergei.yakovenko@weizmann.ac.il}}
}

\begin{abstract}
We revisit the relation between discrete- and continuous-time stationary dynamic optimization, using both the Bellman and Hamiltonian descriptions. We study the problem of embedding discrete Bellman--Lax--Oleinik operators and the associated Hamiltonian cascades into continuous-time dynamics. Rather subtle obstructions to such embeddings emerge even under the most natural assumptions.
\end{abstract}

\dedicatory{Remembering Zvika}

\keywords{Stationary dynamic optimization problems, Hamiltonian systems, Bellman semigroups}

\date{\today}

\maketitle
\section*{Zvika Artstein. In memoriam}
\begin{Small}
\subsection*{Ad personam}
I started working at the Weizmann Institute in October 1991, when Zvika was Dean of the Faculty of Mathematics. In effect, this was my first glimpse into the inner machinery of Western academia, so strikingly different from Soviet academia. Zvika took it upon himself to be my Virgil, although he had no formal obligation whatsoever to do so.

Not long afterwards, he dropped by my office and, almost in passing, asked what I was planning to work on during the two years I was to spend at Weizmann. At the time I had two research subjects rather far removed from one another. One was in the analytic theory of differential equations; the other concerned stationary optimization problems over long and infinite time horizons. Zvika and his students, notably Arie Leizarowitz, who later became a professor at the Technion, were actively working on infinite horizon optimization, although Zvika certainly knew the former subject very well and was fully aware of my ``starting position'' there. When I told him that I intended to keep working quietly on both, he replied with a degree of bluntness quite unusual for him—Zvika was normally very gentle in such matters: "Sergei, do what you can do better than others. Count limit cycles, and don't get distracted by working in an overcrowded field!"

By that time I had not yet had much opportunity to appreciate just how overcrowded that field was. Most journals had been exceedingly difficult to obtain in Soviet libraries, while I had not yet fully discovered the new wealth of resources suddenly open to me. But I trusted Zvika immediately. I put all my optimization drafts into a drawer and concentrated entirely on the analytic theory of differential equations.

Zvika's prescription worked perfectly. In due course, a two-year contract turned into a tenured professorship and three terms as Head of the Department of Mathematics. Yet throughout all those years I never quite lost the regret that I had abandoned several unsolved problems halfway through. From time to time, meeting colleagues at conferences, I would tell them about these problems. They would politely nod, agree that the questions were very interesting and, to the best of their knowledge, still open—but nothing ever came of these conversations.

Zvika passed away on October 31, 2025. His closest colleagues and friends decided to organize a special journal issue in his memory and invited me to contribute a paper. It was then that I realized that I owed it to my late friend to return, if only briefly, to ``his'' subject, to pay tribute to him, and at the same time to settle some unfinished business of my own.

It was something of a mission impossible. Imagine Rip Van Winkle, having slept through two consecutive twenty-year naps, deciding to survey what had happened in his absence and then, for good measure, to finish what he had left undone before falling asleep forty years earlier. The task would have been utterly beyond me without my faithful assistant Geppy (ChatGPT), with whom, for about half a year, I discussed this paper for several hours almost every day. The result is for the reader to judge: \emph{feci quod potui, faciant meliora potentes}. For me, this paper is, above all, a way of repaying a debt to a colleague, a friend of many years, and a remarkable man.

\subsection*{Ad rem}

Some forty years ago, while studying stationary dynamic optimization problems on an infinite time horizon, I explored several approaches to defining infinite extremals in the simplest, discrete-time setting. All roads led to Rome: the spectral equation for the Bellman operator. The discrete Euler–Lagrange equations were easy to derive, but their qualitative analysis remained elusive. During those same years, before coming to Weizmann, I also turned to the corresponding continuous-time variational problem and found, in the convex-concave setting particularly relevant to mathematical economics, a geometric interpretation of the missing boundary conditions at infinity: an invariant Lagrangian manifold passing through a hyperbolic stationary point of the Hamiltonian flow.

What remained unfinished was the attempt to reconcile these two approaches. One possibility was to develop a Hamiltonian theory for the discrete Euler–Lagrange equations, reproducing as much as possible of the continuous-time symplectic picture. Another was to proceed in the opposite direction, embedding discrete-time dynamics into continuous-time flows or Bellman semigroups, thereby bridging the gap between the discrete and continuous theories. Neither program was carried out at the time. The present paper returns to this unfinished business, forty years later, to explore how far such a reconciliation is possible—and where it breaks down.
\par\end{Small}

\section{Stationary discrete time dynamic optimization problems on infinite intervals}\label{sec:infinite-extremals}

\subsection{What do we optimize?}
Simple optimization problems asking to describe, say, maxima of a function $f\:X\to\R$ on the given domain $X\subseteq\R^n$, appear from the first day of Analysis. In modern terms,  they involve various assumptions on the function $f$ (semicontinuity, differentiability, smoothness, concavity, \dots) and on the domain $X$ (finite-dimensionality, topology, compactness, etc.). We will freely use combinations of these properties, referring to them in informal language. 

A \emph{stationary discrete time dynamic optimization problem} is a shorthand for the following situation. We are given a phase space $X$ (everywhere below---a subset of a finite-dimensional Euclidean space $\R^n$) and extended-real-valued function $b\:X\times X\to \Rbar=\R\cup\{-\infty\}$, called the \emph{discrete Lagrangian}. A \emph{trajectory} is a sequence of points $\{x_0,x_1,\dots,x_t,\dots\}$, $t\in\mathbb Z_+$. For a finite trajectory with the horizon $T\in\N$, $T<+\infty$, the sum 
\begin{equation}\label{finite-sum}
S_T(x_0, x_1,\dots, x_T)=\sum_{t=0}^{T-1} b(x_t,x_{t+1})
\end{equation} 
is well defined in $\Rbar$. Under standard compactness and semicontinuity assumptions, its maximization is well posed; throughout this introductory discussion we will tacitly or explicitly assume such conditions whenever needed: you should not bear in mind anything more involved than the problem of maximization of a well-behaved function over a finite-dimensional compact $\underbrace{X\times\cdots\times X}_{T\text{ times}}$.

\begin{Rem}[important]
Unless explicitly stated otherwise, all discrete-time optimization problems considered below are stationary: the transition Lagrangian \(b(x,y)\) does not depend explicitly on time.
\end{Rem}

The optimality condition with respect to local variations requires that for any three consecutive states $x_{t-1},x_t,x_{t+1}\in\R^n$ the middle point $x_t$ satisfies the local optimization problem
\begin{equation}\label{local-discrete}
x_t \in \argmax_{z\in X} b(x_{t-1}, z)+b(z, x_{t+1}) \qquad \forall t=1,2,\dots,T-1
\end{equation}
(we tacitly assume that the maximum exists and is not equal $\pm\infty$). 
Assuming $b$ differentiable in both arguments and
$x_t\in\operatorname{int}X$, the standard Fermat necessary
condition for the extremum takes the form
\begin{equation}\label{DEL}
\partial_y b(x_{t-1}, x_t)+\partial_x b (x_t, x_{t+1}) = 0\qquad\forall t=1,\dots,T-1.
\end{equation}
This equation is sometimes referred to as the \emph{discrete Euler-Lagrange equation} (see below a brief synopsis of the continuous-time case).

The system \eqref{DEL} is a second-order difference equation involving every three consecutive points of the trajectory. To specify the boundary conditions for these equations, one has to do some extra work. From the point of view of Mathematical Economics, for which such problems are endemic, these boundary conditions are asymmetric. Fixing the initial condition $x_0$ is quite natural. The ``mechanical'' condition of fixing $x_T$ for a finite $T$ is much less natural. Instead, the economists prefer to add a terminal bonus $f(x_T)$ to the criterion of the finite-horizon optimization which takes then the form
\begin{equation}\label{finite-T}
\sum_{t=0}^{T-1} b(x_t, x_{t+1}) + f(x_T)\to\max\quad\text{subject to } x_0=\text{fixed}.
\end{equation}  
This formulation seems impeccable so that the solution exists under ``reasonable technical assumptions'' on the discrete Lagrangian and the terminal bonus. If the terminal state $x_T$ is interior to $X$, the
boundary conditions for this finite-dimensional optimization
problem take the form
\begin{equation}\label{DEL-bv}
x_0 \text{ is prescribed}, \qquad \partial_y b(x_{T-1},x_T)+\partial_x f(x_T)=0.
\end{equation}
These boundary conditions are traditionally called \emph{conditions of transversality}. 

\begin{Rem}
Note two key circumstances. First, the system of equations \eqref{DEL} is only an implicit relation imposed on any three points $(x_{t-1},x_t,x_{t+1})$. It is not explicitly resolved giving $x_{t+1}$ as a function of $(x_{t-1},x_t)$. In other words, despite its independence of $t$ (stationarity) it in general does not define the map $X\times X\to X\times X$, $(x_{t-1},x_t)\mapsto (x_t,x_{t+1})$ which is normally associated with a (nonlinear) second order difference equation.  

Second, the boundary value problem is rather nontrivial even if the \eqref{DEL} can be explicitly resolved with respect to $x_{t+1}$. Unlike the standard initial conditions which for the second order difference equation require knowledge of $(x_0,x_1)\in X^2\subseteq\R^{2n}$, it imposes $n$ conditions on $x_0$ (the initial point) and $n$ other conditions on the terminal point $x_T$. There are no automatic theorems that guarantee existence of solutions for the corresponding boundary value problem \eqref{DEL-bv}.

In the continuous-time case (see below), the first difficulty disappears under the natural strict concavity assumption on the Lagrangian with respect to the velocities (negative definiteness of the Hessian $L_{vv}$, standard in the calculus of variations). The second difficulty persists.
\end{Rem}

\subsection{Infinite horizon: the troubles in the making}
The finite-horizon discrete optimization problems arising in economic
models suffer from a common feature, sometimes described as the
``Apr\`es nous, le d\'eluge'' phenomenon. Very roughly, during most of
the period $[0,T]$ an optimal trajectory maintains a certain balance
between investment and consumption, investment being necessary to
ensure a higher level of future consumption. Near the terminal time,
however, this balance is radically shifted in favor of consumption:
the finite-horizon optimization criterion ``sees no future'' beyond
$T$, and therefore assigns no value to investment whose returns would
occur after $T$. This artificial terminal effect suggests moving the
horizon to $T=+\infty$.

However, such a formal extension destroys the very formulation of the
problem. The finite sum becomes an infinite series, which may diverge
on some or even all admissible trajectories. At the same time, we lose
the possibility of introducing a terminal bonus as in
\eqref{finite-T}. 

There are several natural ways out of this conundrum.
Remarkably, they all lead to the same functional equation, which will
eventually provide us with a definition of infinite-horizon extremals.  The exposition below follows \cite{dan1990} and \cite{bellman}.

\subsubsection{Calibration transformation of the discrete Lagrangian}\label{sec:nil-calibration}
Consider an example where the discrete Lagrangian has a very special form (we call it \emph{nil-Lagrangian}),
\begin{equation}\label{b-le-0}
\forall x,y\in X \quad b(x,y)\le 0,\quad\text{and}\quad \forall x\in X\ \exists y\in X:\ b(x,y)=0.
\end{equation} 
Then all problems related to the possible divergence of the series $S=\sum_{t=0}^{+\infty}b(x_t,x_{t+1})$ disappear completely. For any initial condition $x_0$ and any trajectory the series either converges monotonically to $S\le 0$, or diverges to $-\infty$ by the first assumption. The value $S=0$ is always feasible by the second assumption, and construction of the optimal trajectory is trivial: 
$$
x_{t+1}\in\argmax_{y\in X} b(x_t, y),\qquad\forall t=0,1,2,3,\dots.
$$ 
Of course, not all Lagrangians are nil-Lagrangians. Yet there is an equivalence relation between Lagrangians which does not change the discrete Euler-Lagrange equations. 

\begin{Def}\label{def:cohomol}
Two Lagrangians $b'(x,y)$ and $b''(x,y)$ defined on the same space $X\times X$ are called \emph{cohomologous}, if there exists a function $s\:X\to\R$ and a number $\l\in\R$ such that
\begin{equation}\label{cohomol}
\forall x,y\in X\quad b''(x,y)=b'(x,y)+s(y)-s(x)-\lambda
\end{equation}
(the reason why $\l$ occurs with the sign minus, will be clear later, in \secref{sec:reappear}).
\end{Def}
This is indeed an equivalence relation. Its role is fully revealed by the following elementary property.

\begin{Prop}
If $b'$ and $b''$ are two cohomologous Lagrangians, then for any $T<+\infty$ and for any $p,q\in X$ the two optimization problems
$$
\sum_{t=0}^{T-1} b'(x_t,x_{t+1})\to\max,\qquad\sum_{t=0}^{T-1} b''(x_t,x_{t+1})\to\max
$$
with the same boundary conditions $x(0)=p,x(T)=q$, have the same extremals. 
\end{Prop}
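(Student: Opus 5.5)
The plan is to compare the two objective functionals directly on the common admissible set: all finite trajectories $(x_0,x_1,\dots,x_T)\in X^{T+1}$ with $x_0=p$, $x_T=q$. I will show that on this set they differ by a constant. Substituting \eqref{cohomol} into the sum gives
\begin{equation*}
\sum_{t=0}^{T-1} b''(x_t,x_{t+1})=\sum_{t=0}^{T-1} b'(x_t,x_{t+1})+\sum_{t=0}^{T-1}\bigl(s(x_{t+1})-s(x_t)\bigr)-T\lambda .
\end{equation*}
The middle sum telescopes to $s(x_T)-s(x_0)=s(q)-s(p)$. Hence
\begin{equation*}
S''_T(x_0,\dots,x_T)=S'_T(x_0,\dots,x_T)+C,\qquad C=s(q)-s(p)-T\lambda ,
\end{equation*}
where $C$ depends only on $p$, $q$, $T$ and not on the intermediate points $x_1,\dots,x_{T-1}$.

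Since $s$ and $\lambda$ are real-valued, $C$ is a finite real number. Adding it to an $\Rbar$-valued function preserves the value $-\infty$ and preserves the order on $\Rbar$. Therefore the two functionals, viewed as functions of $(x_1,\dots,x_{T-1})\in X^{T-1}$, have the same argmax sets (global maximizers). They also have the same local maximizers: any neighborhood on which one is maximal works for the other.

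For extremals understood via the local condition \eqref{local-discrete}, I will make the same check at the level of triples. By \eqref{cohomol},
\begin{equation*}
b''(x_{t-1},z)+b''(z,x_{t+1})=b'(x_{t-1},z)+b'(z,x_{t+1})+s(x_{t+1})-s(x_{t-1})-2\lambda .
\end{equation*}
The dependence on $z$ cancels, so the argmax in $z$ is unchanged. In the differentiable interior case the equations \eqref{DEL} themselves coincide. This holds even though $s$ need not be differentiable, because the $s$-terms drop out before any differentiation; I would mention this explicitly.

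None of these steps is a real obstacle. The only point needing care is that ``extremal'' could mean global maximizer, local maximizer, or solution of \eqref{DEL}. The proof should handle all three, and the triple-level identity above does so uniformly. It also relies on $s$ being finite-valued, so that no $-\infty$ terms or indeterminate forms $\infty-\infty$ can arise.
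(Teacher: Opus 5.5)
Your proof is correct and is essentially the paper's argument: the telescoping sum shows that, on trajectories with $x_0=p$ and $x_T=q$, the two functionals differ by the constant $s(q)-s(p)-\lambda T$, so they have the same extremals. Your additional checks are sound refinements that the paper leaves implicit: the finiteness of $s$, which keeps $-\infty$ values harmless, and the triple-level identity, which covers both the local condition and the discrete Euler--Lagrange equations.
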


\begin{proof}
The difference between the two sums is equal to the telescopic sum $\sum_{t=0}^{T-1} \bigl( s(x_{t+1})-s(x_t)\bigr)-\lambda T=s(q)-s(p)-\lambda T$, that is, a constant. Two functions which differ by a constant have the same supply of extremal trajectories.
\end{proof}

This raises the obvious question: \emph{under what conditions a given discrete Lagrangian is cohomologous to a nil-Lagrangian}? 

\subsubsection{Dynamic programming}
The above construction, if properly implemented, would give a good definition of an actually infinite trajectory. One can instead consider a family of finite-horizon optimization problems that would, in a sense, be independent of the planning horizon $T$. The corresponding construction is known as the dynamic programming method.

Consider a finite horizon problem \eqref{finite-T} with a terminal bonus $f\:X\to\Rbar$. The dynamic programming method consists in the following ``reduction backwards''. Solution of the problem \eqref{finite-T} on the interval $[0,T]$ is fully equivalent to solution of the optimization problem 
\begin{equation}\label{T-1}
\sum_{t=0}^{T-2} b(x_t,x_{t+1}) + g(x_{T-1}),
\end{equation}
where $g(\cdot)$ is a new terminal bonus corresponding to the last step of the trajectory, cf.~with \eqref{DEL-bv}. 
$$
g(x_{T-1})=\max_{y\in X} \bigl(b(x_{T-1}, y) + f(y)\bigr).
$$
This construction can be obviously iterated as follows.

\begin{Def}[Bellman operator]\label{def:bellman}
Let $b\:X^2\to\Rbar$ be a discrete Lagrangian. Let $\Rbar(X)$ denote the set of all functions $X\to\Rbar$, without a priori regularity or boundedness requirements. Whenever the supremum below belongs to $\Rbar$, it defines the Bellman operator by 
\begin{equation}\label{bellman}
\B\:\Rbar(X)\to\Rbar(X), \qquad g=\B f \iff g(x)=\sup_{y\in X}\bigl(b(x,y)+ f(y)\bigr). 
\end{equation}
We will sometimes call the function $b(x,y)$ the \emph{kernel} of the operator $\B$. The operator $\B$ itself will be from now on called the \emph{Bellman operator} associated with the kernel (discrete Lagrangian) $b$. 
\end{Def}

\begin{Rem}
In general, the supremum in \eqref{bellman} need not be attained or finite. In the optimization problems considered below we assume, whenever the formula is used, that it is attained and belongs to $\Rbar$. Accordingly, we shall write $\max$ in place of $\sup$ as a reminder of this standing assumption.
\end{Rem}

In these terms the two finite-horizon problems, \eqref{finite-T} and another one, 
\begin{equation}\label{bellman-one-down}
\sum_{t=0}^{T-2} b(x_t, x_{t+1}) + g(x_{T-1}), \qquad g=\B f,
\end{equation}
will be fully equivalent in the most natural sense: any solution of the $T$-horizon problem restricted on the shorter interval will yield a solution of the ($T-1$)-horizon problem and conversely, any solution of the $(T-1)$-horizon problem can be automatically extended to yield solutions of the $T$-horizon problem. 

When iterated \emph{ab ultimo}, this scheme allows constructing a sequence of functions $f,\B f,\B^2 f, \dots \B^{T-1}f$ (backwards iterations) that reduce the initial $T$-step optimization problem to the one-step problem 
$$
x_1 \in\argmax_{y\in X} \bigl(b(x_0, y)+ \B^{T-1}f(y)\bigr)
$$ 
that can be continued by induction. 

\subsubsection{How this could be put to work?}
Assume that we have found a function $f^*\in\Rbar(X)$ which is fixed by the Bellman operation with a given kernel $b$, $\B f^*=f^*$. Then all finite-horizon problems will become self-consistent: the inductive rule 
$$
x_{t+1}\in\argmax_{y\in X} \bigl(b(x_t,y)+ f^*(y)\bigr), \qquad t=0,1,2,3,\dots
$$
will provide solutions to the respective finite-horizon problems for an arbitrary $T$ with the same terminal bonus $f^*$. In plain words, the terminal bonus $f^*$ as above, can automatically recede to the future without affecting all preceding finite horizon problems. The finite horizon problem with such bonus becomes insensitive to the parameter $T$ as soon as it takes any finite value. 

What remains to notice is that adding a constant term changes nothing in the behavior of the respective extremals (see above). This allows us to replace the assumption on the function $f^*\in\Rbar(X)$ by a weaker assumption involving an arbitrary additional parameter $\l$,
\begin{equation}\label{eigenB}
\B f^*= f^*+\lambda, \qquad f^*\in\Rbar(X),\ \lambda\in\R.
\end{equation}  

\begin{Def}
The functional equation \eqref{eigenB} with respect to the unknown function $f^*\in\Rbar(X)$ and the number $\l\in\R$ will be called the \emph{spectral equation} for the operator $\B$ for the reasons to be seen very shortly. An eigenfunction is required to be nontrivial, i.e., $f^*\not\equiv-\infty$; an eigenvalue is a number $\lambda$ admitting such an eigenfunction.
\end{Def}

\subsubsection{How to bridge the two approaches}
The approach based on finding a nil-Lagrangian $b''$ cohomologous to the given kernel $b(x,y)=b'(x,y)$ would allow us to change the Lagrangian (without changing its extremals) so that the infinite sum $\sum_{t=0}^{\infty} b''(x_t,x_{t+1})$ makes sense in $\Rbar$ and the extremal value of this sum is equal to zero. Speaking in metamathematical terms, such trajectory is constructed in the class of \emph{actually infinite paths}. 

The second approach rather focuses on constructing potentially infinite extremals. Fixing any solution of the spectral equation $f^*$, we can for any initial point $x_0$ and any $T$ construct (possibly, multiple) $f^*$-optimal $T$-step extremals. Finite pieces of $f^*$-extremals with the same $f^*$ can be arbitrarily stitched together: if $\{x_t\}_{t=0}^T$ is one such extremal with $x_0=p\in X$, and $\{x'_t\}_{t=0}^{T'}$ is another extremal with the initial condition $x_0'=x_T$, then the stitched $T+T'$-step trajectory will be $f^*$-optimal with the initial condition $p$.

\subsubsection{Closing the loop: the spectral equation reappearing}\label{sec:reappear}
To close the loop, there remains to make one simple step in \secref{sec:nil-calibration}. If $b''$ in \eqref{cohomol} is a nil-Lagrangian, then the condition \eqref{b-le-0} can be put into one simple form, $\B''0=0$, where $\B''$ is the associated Bellman operator. Then \eqref{cohomol} yields the tautological identity 
\begin{align*}
\forall x\in X\quad 0&=\max_y b''(x,y) 
\\ &= \max_{y}\bigl(b'(x,y)+s(y)-s(x)-\l\bigr)
\\
&= \max_y\bigl(b'(x,y)+s(y)\bigr)-s(x)-\l. 
\end{align*}
This means that if the Lagrangian $b'$ is cohomologous to the nil-Lagrangian $b''$ as in \eqref{cohomol} and $\B'$ is the Bellman operator associated with $b'$, then we have the identity
\begin{equation}\label{bellman-coh}
\B' s= s+\lambda, \qquad s\:X\to\R, \ \l\in \R.
\end{equation}

\subsection{The spectral equation for the Bellman operators}\label{sec:spectral}
Given a discrete Lagrangian $b\:X\times X\to\Rbar$, there is no intuitive way to define infinite extremals $\{x_t\in X:t\in\Z_+\}$ via simple renormalization of the functional. The classical optimal-growth economics works with different alternatives using more involved notions of \emph{overtaking} and \emph{catching up}, binary relationships on infinite admissible trajectories with the same initial point, which not always exist. Among the contributions to this approach were the works of Arie Leizarowitz, a student and collaborator of Zvika Artstein, on the existence of overtaking-optimal trajectories in infinite-horizon control problems, see \cite{Leizarowitz1985}.

Instead, two rather different approaches motivate the same definition. Such infinite extremals can be associated with \emph{any} solution $f^*$ of the spectral equation \eqref{eigenB} for the nonlinear Bellman operator \eqref{bellman}. Among other features, at interior states where $b$ is differentiable, these infinite trajectories satisfy the discrete Euler--Lagrange equations \eqref{DEL}. Indeed, the usual Fermat condition applies to every interior triple point $(x_{t-1},x_t,x_{t+1})$ of an optimal finite segment. At boundary states, only the constrained first-order optimality condition is available. 

\begin{Def}[main]\label{def-f-extremal}
Let $\B\:\Rbar(X)\to\Rbar(X)$ be a Bellman operator and $f^*\in\Rbar(X)$ a solution of the spectral equation $\B f^*=f^*+\l$. For an initial point $x_0$ satisfying $f^*(x_0)>-\infty$, an \emph{infinite $f^*$-extremal} is any sequence of points $x_t\in X$ built inductively using the rule
$$
x_{t+1}\in\argmax_{y\in X} b(x_t,y) + f^*(y), \qquad t=0,1,2,\dots.
$$
\end{Def} 

\begin{Rem}
Definition~\ref{def-f-extremal} is highly conditional, first and foremost on the existence of solutions of the spectral equation associated with the Lagrangian $b(x,y)$. To guarantee the existence, one must impose certain conditions (compactness of $X$, properness and upper semicontinuity of $b$ as an extended-real-valued function on $X\times X$). The inductive rule also presumes certain regularity of the solution $f^*$ provided that it exists. As this semi-popular treatise is not meant to be a comprehensive monograph exploring all meaningful combinations of assumptions on $X$ and $b$, I will restrict myself only to the examples most simple technically and best motivated economically. For instance, the set-valued operation $\argmax$ might well be ill defined if the solution $f^*$ lacks some properties like boundedness or upper semicontinuity.

A completely different intrigue unfolds in the case where the spectral equation has multiple solutions. Then anyone interested in applications of this theory would have to decide, which solutions $f^*$ are better suited for the applications. 

In particular, consider the finite horizon problem \eqref{finite-T} in the case where the terminal bonus $f$ is \emph{not a solution} to the spectral equation (if it were, the optimal $T$-step solutions simply stabilize to the infinite $f^*$-extremal in the sense that any such $T$-step path is the initial $T$-segment of an infinite $f^*$-extremal and vice versa). Do the finite horizon $T$-step extremals $\{x_{t,T}:t=0,1,\dots,T\}$ for the problem \eqref{finite-T} converge in any sense to an infinite $f^*$-extremal? If yes, then in what sense and how $f^*$ should be chosen if the spectral equation has multiple solutions? 

In classical optimal-growth models of economics, this circle of questions is closely related to the so-called \emph{turnpike theory}. In its simplest stationary form, the typical picture is the following. There is a unique universal (usually, stationary or, in the case where $b$ is first order homogeneous in $(x,y)$, a geometric progression $x_t^*=\mu^t x_*$), called \emph{the golden rule path}.  Then almost all $T$-step trajectories first reach the golden rule path from the prescribed initial state $x_0$, follow this path until almost the moment $T$ but at the very end of the allotted time switch to the ``apr\`es nous, le d\'eluge'' tactics, depending on the initial terminal bonus. 

Part of these questions was answered in the recent decades. \emph{Hanc marginis exiguitas non caperet}.
\end{Rem}

\subsection{Idempotent algebra and analysis}
The specific form of the spectral equation (and even the choice of the name for it) is not accidental. In the mid-1980s several people independently made the following observation: the extended real line $\Rbar$ becomes a semiring with two algebraic operations, $\max\{a,b\}$ and $a+b$. These two operations satisfy axioms of a semiring being commutative and distributive, with $-\infty$ being the $\max$-neutral element and $0$ being the neutral element by the addition. It is not a ring for one simple reason: the operation $(a,b)\to\max\{a,b\}$ does not always admit the inverse: the equation $\max\{a,x\}=b$ has no solution for $b<a$ and has infinitely many solutions for $b=a$. It is called an \emph{idempotent} semiring because $\max{a,a}=a$.

\begin{Def}[max-plus semiring]\label{def:semiring}
The set $\Rbar=\R\cup\{-\infty\}$ with the operations 
\begin{equation}\label{max-plus}
a\oplus b=\max\{a,b\},\ a\otimes b= a+b,\quad a\oplus{(-\infty)}=a,\ a\otimes{(-\infty)}=-\infty
\end{equation} 
is called the \emph{max-plus idempotent semiring}.
\end{Def}

\begin{Small}
\begin{Rem}
The max-plus semiring may replace the genuine ring (or the field $\R$) for many algebraic constructions. For instance, the space of $\R$-valued functions on an abstract set $X$, which carries the structure of the  $\R$-linear space, becomes a semimodule $\Rbar(X)$ over $\Rbar$. To stress that such objects differ from their $\R$-linear analogs, they are sometimes used as tropical objects (e.g., tropical matrices, to distinguish them from the real matrices) etc. 

The tropical max-plus semiring introduced in Definition~\ref{def:semiring} is sometimes called itself the tropical semiring, although this term is more often applied to the isomorphic min-plus semiring on $\underline\R=\R\cup\{+\infty\}$.  The pair of operations max-times also yields an isomorphic semiring. Yet there are non-isomorphic semirings with max-min operations (both idempotent) and the Boolean semiring (on two elements $\{0,1\}$ with $\oplus=\lor$, $\otimes=\land$, or with set theoretic operations $\cup$ and $\cap$ satisfying de Morgan laws).  Throughout this paper we work exclusively with the max-plus idempotent structure.
\end{Rem}
\end{Small}

\subsubsection{The Maslov dequantization}\label{sec:dequantization}
The idempotent structure is in fact a singular limit of the usual commutative semiring structure on $\R_+$. Consider the logarithmic chart $S\:u\mapsto h\ln u$ in which $h>0$ is a small parameter (``the Planck constant''). It is bijective between $\Rbar$ and $\R_+$ and so various structures can be carried both ways (the inverse map takes $x\mapsto \exp \frac xh$). Define two operations,   
\begin{equation}\label{semiring}  
a \oplush b= h\ln \bigl(\exp \tfrac ah +\exp \tfrac bh\bigr), \qquad a\otimesh b = h\ln\bigl(\exp \tfrac ah \cdot \exp \tfrac bh\bigr)=a+b.
\end{equation}
These operations transport the usual ``linear'' structure from $\R_+$ onto $\Rbar$, with $h\ln 0=-\infty$. Note that there is no inverse operation for the usual addition $+$ on $\R_+$, so the word ``linear'' should be enclosed in quotes: although $\R$ is the ring, $\R_+$ is only a semiring with respect to $+,\times$. 

The first operation easily admits passage to the limit as $h\to0+$: $$\lim_{h\to0_+} a\oplush b=\max\{a,b\}= a\oplus b,$$ the second does not depend on $h$ at all, being the usual sum. 

\begin{Rem}
The above singular limit was known in mathematical physics long before the formal discovery of the idempotent structure on $\Rbar$. Consider the \emph{linear} parabolic equation $\partial_t u = \frac h2 \partial^2_{zz}u$ with a small parameter $h >0$. Then the Cole--Hopf change of variables $S=h\ln u$ transforms it to the equation $$\partial_t S-\tfrac12 (\partial_z S)^2 = \tfrac h2 \partial^2_{zz} S$$ which is (up to the sign) the viscous Hamilton--Jacobi equation with the viscosity parameter $h>0$. As $h\to0+$, the right hand side disappears and we arrive at the standard Hamilton--Jacobi equation. 
\end{Rem}

In the opposite direction, the viscosity term of the second order provides a regularization of the Hamilton--Jacobi equation. The limit $h\to0+$ is the classical vanishing-viscosity procedure; its algebraic counterpart, in which the ordinary linear structure degenerates into the max-plus one, is usually referred to as the \emph{Maslov dequantization}, see \cite{vasya}, \cite{litvinov}.

\begin{Def}[Maslov dequantization of the max-plus semiring]\label{def:dequantization}
The semiring $\Rbar$ with the operations $\oplush$ and $\otimesh$ is called the Maslov dequantization of the max-plus semiring $\Rbar$.
\end{Def}
The term ``Maslov dequantization'' will be used by extension to various constructions over the max-plus semiring $\Rbar$. 

\subsection{Characteristic property of the Bellman operators}

The idempotent operations on $\Rbar$ induce pointwise operations
$f\oplus g$ and $\mu\otimes f$ on $\Rbar(X)$. Every Bellman
operator satisfies
\begin{equation}\label{id-linearity}
 \B(f\oplus g)=\B f\oplus\B g,\qquad
 \B(\mu\otimes f)=\mu\otimes\B f.
\end{equation}
The same identity holds for arbitrary pointwise suprema whenever
these suprema belong to the chosen function space. Conversely,
on the full space of $\Rbar$-valued functions, any operator preserving
arbitrary $\oplus$-sums and scalar multiplication is a Bellman operator:
indeed, writing
\begin{equation*}
 \delta_y(x)=\begin{cases}0,&x=y,\\-\infty,&x\ne y,\end{cases}
 \qquad f=\bigoplus_{y\in X}f(y)\otimes\delta_y,
\end{equation*}
we obtain
\begin{equation*}
 (\B f)(x)=\bigoplus_{y\in X}
 \bigl(b(x,y)\otimes f(y)\bigr),\qquad
 b(x,y)=(\B\delta_y)(x).
\end{equation*}
This converse should be applied with some qualification to spaces
of upper semicontinuous functions bounded from above, which need not be
closed under arbitrary pointwise suprema.

The spectral equation takes the form
\begin{equation}\label{spectral}
 \B f^*=\l^*\otimes f^*,\qquad
 f^*\in\Rbar(X),\quad\l^*\in\R.
\end{equation}
For a finite set $X=\{p_1,\dots,p_n\}$, the Lagrangian becomes
a max-plus matrix $(b_{ij})$, and the equation reads
\begin{equation*}
 (\B f)_i=\bigoplus_{j=1}^n(b_{ij}\otimes f_j)
 =\l\otimes f_i.
\end{equation*}
This explains the terminology \emph{eigenfunctions} and
\emph{eigenvalues} for solutions of the spectral equation.

\begin{Small}
\begin{Rem}
For a finite-valued matrix $b=(b_{ij})$, Maslov dequantization
provides another route to existence. Put
$A_h=(\exp(b_{ij}/h))$, $h>0$. This is a strictly positive
matrix, so the Perron--Frobenius theorem supplies a positive
eigenvector $u_h$ and an eigenvalue $\rho_h>0$. Normalize
$\max_i h\log(u_h)_i=0$ and set
$f_i^{(h)}=h\log(u_h)_i$, $\l_h=h\log\rho_h$. Then
\begin{equation*}
 \l_h+f_i^{(h)}
 =h\log\sum_{j=1}^n
 \exp\bigl((b_{ij}+f_j^{(h)})/h\bigr).
\end{equation*}
The inequalities
$\min_{i,j}b_{ij}\le\l_h\le\max_{i,j}b_{ij}+h\log n$
and the finiteness of all entries of $b$ give uniform bounds
on the normalized vectors $f^{(h)}$. Passing to a subsequence
and using the log-sum-exp limit yields a finite-valued solution
of \eqref{spectral}. For matrices with entries $-\infty$,
strict positivity is lost and this argument requires additional
care; the graph-theoretic result below handles that case directly.
\end{Rem}
\par\end{Small}

For an arbitrary phase space $X$, the Bellman operator can be
written symbolically as
\begin{equation*}
 (\B f)(x)=\idint_{y\in X}
 \bigl(b(x,y)\otimes f(y)\bigr)
 =\sup_{y\in X}\bigl(b(x,y)+f(y)\bigr).
\end{equation*}
The glyph $\idints$ recalls integration, but idempotent integration
requires no measure: it is simply a supremum. We use $\sup$
here because a maximum need not be attained on a general $X$.

\subsection{Solvability of the spectral equation}

The finite-dimensional analogy suggests that the spectral equation
\eqref{spectral} should be solvable in many natural situations.
We ask when nontrivial eigenfunctions exist, when the eigenvalue is
unique, when the eigenfunction is unique up to an additive constant,
and when eigenfunctions possess additional regularity, such as
finiteness or continuity. These questions were studied extensively;
see \cites{dan1990,bellman,vasya,litvinov}.

\begin{Small}
The identically $-\infty$ operator has no nontrivial eigenfunction.
More generally, if $\B$ is \emph{nilpotent}, i.e.,
$\B^N=\boldsymbol{-\infty}$ for some finite $N$, it has no
nontrivial eigenfunction with a finite eigenvalue. For finite $X$,
this is the only obstruction to existence.

\begin{Prop}\label{prop:romanovskii}
If $\B$ is a non-nilpotent Bellman operator on $\Rbar(X)$
with finite $X$, then it admits at least one eigenvalue.
\end{Prop}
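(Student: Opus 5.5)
We view the kernel $b=(b_{ij})$ as a weighted directed graph $G$ on the vertex set $X=\{p_1,\dots,p_n\}$. There is an arc $i\to j$ exactly when $b_{ij}>-\infty$, and it carries the weight $b_{ij}$. Then $(\B^N f)_i$ is the maximum, over paths of length $N$ starting at $i$, of the total path weight plus $f$ at the endpoint. Hence $\B$ is nilpotent iff $G$ has no arbitrarily long paths, which for a finite graph means iff $G$ is acyclic. So non-nilpotency gives at least one cycle. The plan is the classical max-plus spectral theory: take $\l$ to be the maximal cycle mean among cycles in a suitable part of the graph, and build the eigenfunction from optimal path values.

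\emph{Step 1 (reduction to a closed class).} Decompose $G$ into strongly connected components and pick a component $C$ that contains a cycle and is \emph{final among such}. This means no component containing a cycle is reachable from $C$ other than $C$ itself. Such a $C$ exists by finiteness: choose a cyclic component minimal in the partial order of reachability.

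\emph{Step 2 (the eigenvalue).} Set $\l=\max$ of the mean weights $\frac1k\sum b_{i_0i_1}+\dots+b_{i_{k-1}i_0}$ over elementary cycles in $C$. These are finitely many. Put $a_{ij}=b_{ij}-\l$ on $C$. Every cycle in $C$ now has weight $\le0$, so the path maximum $a^*_{ij}=\sup$ of path weights from $i$ to $j$ of length $\ge1$ in $C$ is finite and attained by elementary paths. Some vertex $k$ lies on a cycle of weight exactly $0$, so $a^*_{kk}=0$.

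\emph{Step 3 (the eigenfunction).} Define $f_i=a^*_{ik}$ for $i\in C$. This is finite because $C$ is strongly connected. The Bellman recursion $\max_j(a_{ij}+a^*_{jk})=a^*_{ik}$ holds: the maximum over paths of length $\ge2$ is $\le a^*_{ik}$, and the term $j=k$ recovers the length-one paths since $a^*_{kk}=0$. This gives $\B_C f=f+\l$ on $C$.

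\emph{Step 4 (extension to all of $X$), the main obstacle.} Put $f\equiv-\infty$ outside $C\cup\{\text{vertices from which }C\text{ is reachable}\}$. For a vertex $i$ outside $C$ that reaches $C$, define $f_i$ as the supremum over paths from $i$ into $C$ of $\sum(b-\l)$ plus $f$ at the entry point. The delicate points are as follows.

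First, along the way the path may visit other components, and cycles there could make this supremum $+\infty$. To avoid this I would instead choose $C$ to be the component carrying the \emph{global} maximal cycle mean $\l$, and among those one downstream-maximal. Then all cycles traversed have mean $\le\l$, so the supremum is finite and attained on elementary paths.

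Second, vertices from which $C$ is unreachable must get $-\infty$. This is consistent, since their successors also cannot reach $C$.

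Third, arcs from $C$ leaving $C$ must not increase $f$ on $C$. This holds because $f=-\infty$ off the upstream set, and vertices downstream of $C$ cannot reach $C$.

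With these choices the identity $\max_j(b_{ij}+f_j)=f_i+\l$ is a dynamic-programming identity for longest paths with cycle weights $\le0$, checked separately on $C$, upstream of $C$, and elsewhere. Since $f_k=0$, the eigenfunction is nontrivial and $\l$ is an eigenvalue.
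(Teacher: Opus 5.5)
Your argument is correct. Once you switch in Step 4 to a component carrying the global maximal cycle mean, it is the paper's proof: $\lambda$ is the maximal cycle mean, $k$ lies on a zero-weight cycle of $b-\lambda$, and the eigenfunction is the maximal $(b-\lambda)$-weight of paths to $k$, set to $-\infty$ where $k$ is unreachable. The paper defines this directly over all paths in $X$, which makes your strongly connected decomposition unnecessary: the ``final component'' choice of Step 1 (which you rightly abandon) and the ``downstream-maximal'' refinement can both be dropped, since any component containing a maximal-mean cycle works.
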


\begin{proof}
The directed graph of finite-valued transitions has paths of
arbitrarily large finite length, since $\B$ is not nilpotent.
As $X$ is finite, it therefore contains a directed cycle.
Among all simple directed cycles choose one, $\gamma$, with
maximal average gain
\begin{equation}\label{average}
 \l_* =\max_\gamma\l(\gamma),\qquad
 \l(\gamma)=\frac1N\sum_{t=0}^{N-1}
 b(x_t,x_{t+1}),\quad x_N=x_0.
\end{equation}
Put $b^*(x,y)=b(x,y)-\l_*$. Every directed cycle then
has nonpositive total $b^*$-weight, and $\gamma$ has weight zero.
Fix $x_*\in\gamma$ and define $g(x)$ as the supremum of the
$b^*$-weights of all finite admissible paths from $x$ to $x_*$,
including the path of length zero when $x=x_*$. If there is no
such path, put $g(x)=-\infty$.

Deleting cycles of nonpositive weight cannot decrease a path's
weight. Thus the supremum is attained among paths without repeated
vertices (apart from the optional zero-weight return to $x_*$);
there are only finitely many such paths. Hence $g$ is bounded
above and is finite on $\gamma$, so it is not identically
$-\infty$.

For every admissible edge $x\to y$, appending a path from $y$
to $x_*$ shows that $b^*(x,y)+g(y)\le g(x)$; hence
$\B^*g\le g$. If $x\ne x_*$ and $g(x)>-\infty$, the first
edge of a maximizing path gives equality. At $x=x_*$, the
zero-weight cycle $\gamma$ gives equality as well. When
$g(x)=-\infty$, the inequality already forces equality.
Therefore $\B^*g=g$, or equivalently
$\B g=g+\l_*$.
\end{proof}

For infinite $X$, a Bellman operator can be non-nilpotent and
still have no eigenfunction.

\begin{Ex}
Let $X=\Z_+$ and define
\begin{equation*}
 b^-(i,j)=\begin{cases}
 0,&j=i-1,\\
 -\infty,&\text{otherwise}.
 \end{cases}
\end{equation*}
The corresponding Bellman operator is non-nilpotent, but it has
no nontrivial eigenfunction. Indeed, its spectral equation at
$i=0$ forces $f(0)=-\infty$, and the recurrence
$f(i-1)=f(i)+\l$ for $i\ge1$ forces $f\equiv-\infty$.

For contrast, let $b^+(i,j)=0$ when $j=i+1$, and $-\infty$
otherwise. Then the spectral equation is
$f(i+1)=f(i)+\l$, and for every $\l\in\R$ its nontrivial eigenfunctions are
$f(i)=c+\l i$. 
Thus the right-shift operator has infinitely many eigenvalues.
\end{Ex}

\subsubsection{Nontrivial results on solvability of the spectral equation}

These two observations show that the existence of eigenvalues is
a nontrivial issue in the infinite-dimensional case. We list
several positive results in this direction, all known for a long time.

Proposition~\ref{prop:romanovskii} should be compared with the
following much earlier theorem by I. V. Romanovskii.

\begin{Thm}[graph case, \cite{Rom1967}]
Let $\B$ be a Bellman operator on a \emph{finite} state space $X$,
with a \emph{finite-valued} Lagrangian $b\:X\times X\to\R$.
Then the spectral equation \eqref{spectral} admits a finite-valued
solution. The corresponding eigenvalue $\l$ is unique and equal
to the maximal average gain \eqref{average} over all self-avoiding
cycles $\gamma$.

Moreover, the eigenfunctions admit a complete description in
terms of the maximal (critical) cycles of the complete oriented
weighted graph associated with $b$.
\end{Thm}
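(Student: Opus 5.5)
Existence comes almost for free from Proposition~\ref{prop:romanovskii}. A finite-valued kernel is never nilpotent, since every transition is admissible, so that proposition supplies $g$ with $\B g=g+\l_*$, where $\l_*$ is the maximal average gain \eqref{average} over simple cycles. The construction there gives more: $g(x)$ is the supremum of $b^*$-weights of paths from $x$ to $x_*$. Because the graph is complete, the one-edge path $x\to x_*$ is always admissible, so $g(x)\ge b^*(x,x_*)>-\infty$. We also showed $g$ is bounded above, so $g$ is finite-valued. This settles the first claim.

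For uniqueness of the eigenvalue, I would take an arbitrary solution $f$ of $\B f=f+\l$ with $f\not\equiv-\infty$. First show $f$ is finite everywhere: if $f(y)>-\infty$ for some $y$, then for every $x$ we have $f(x)+\l\ge b(x,y)+f(y)>-\infty$. The argument then has two directions.

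\emph{Lower bound.} Iterating the inequality $f(x_t)+\l\ge b(x_t,x_{t+1})+f(x_{t+1})$ along any cycle $x_0,\dots,x_N=x_0$ and summing telescopically gives $N\l\ge\sum b(x_t,x_{t+1})$. Applied to a maximizing cycle, this gives $\l\ge\l_*$.

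\emph{Upper bound.} For each $x$ choose $\sigma(x)\in\argmax_y\bigl(b(x,y)+f(y)\bigr)$; the maximum exists because $X$ is finite. The orbit of $\sigma$ from any point eventually enters a cycle, and along that cycle we have equality: $N\l=\sum b(x_t,\sigma(x_t))$. After decomposing into simple cycles if necessary (for a functional-graph orbit the cycle is already simple), the right-hand side is at most $N\l_*$. Hence $\l=\l_*$.

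The final claim, the description of eigenfunctions via critical cycles, is where the real work lies. Normalize so that $\l_*=0$ and let $C$ be the critical set: the union of vertices lying on zero-weight cycles of $b^*$. The target statement is the representation formula
\[
f(x)=\max_{c\in C}\bigl(d(x,c)+f(c)\bigr),
\]
where $d(x,c)$ is the maximal $b^*$-weight of a path from $x$ to $c$. It should be accompanied by the statement that each function $d(\cdot,c)$ is itself an eigenfunction, and that two critical vertices on a common critical cycle give eigenfunctions differing by a constant.

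The proof plan for this part has three steps. First, the inequality $f(x)\ge d(x,c)+f(c)$ follows from the supersolution inequality along paths. Second, for the reverse inequality, follow the $\sigma$-orbit from $x$: it reaches a cycle on which equality holds, so that cycle has weight zero and is critical. The equalities accumulated along the way give $f(x)=(\text{path weight})+f(c)$ for some $c$ on that cycle. Third, the fact that $d(\cdot,c)$ satisfies the equation at $x=c$ itself uses criticality, exactly as at $x_*$ in the proof of Proposition~\ref{prop:romanovskii}.

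The main obstacle I expect is bookkeeping rather than any single hard estimate. It consists of showing that the extreme eigenfunctions $d(\cdot,c)$ coincide up to additive constants exactly on the classes of the critical graph (vertices joined by critical cycles), so that the dimension of the eigen-semimodule equals the number of such classes.
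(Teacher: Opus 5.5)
Your proposal is correct. For the part the paper actually proves, you follow the paper's route. The theorem itself is only quoted from Romanovskii, and the paper's own argument is Proposition~\ref{prop:romanovskii}. Your observation that completeness makes the one-edge path $x\to x_*$ admissible, so that $g\ge b^*(\cdot,x_*)>-\infty$, is exactly what upgrades that proposition to a finite-valued eigenfunction.

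The paper also sketches a second existence route for finite-valued kernels: Perron--Frobenius for $(e^{b_{ij}/h})$, followed by $h\to0^+$. That route gives finiteness directly. It does not by itself identify the limiting eigenvalue with the maximal cycle mean \eqref{average}; your two-sided argument supplies that identification.

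The uniqueness of $\lambda$ and the critical-cycle description are not proved in the paper, so there your plan stands on its own. It is the standard max-plus argument, and each step is sound:
\begin{itemize}
\item nontrivial eigenfunctions are finite by completeness;
\item telescoping along cycles gives $\lambda\ge\lambda_*$;
\item the periodic part of an argmax selector is a simple cycle of mean $\lambda$, giving $\lambda\le\lambda_*$;
\item following that selector gives $f=\max_{c\in C}(d(\cdot,c)+f(c))$.
\end{itemize}

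A few points should be made explicit. Set $d(c,c)=0$ via the empty path; for $c\in C$ this agrees with the best closed path of positive length. Note that $d$ is finite because all cycles have nonpositive $b^*$-weight, so the supremum is attained on simple paths. The converse half of the description, that every $\max_{c\in C}(a_c+d(\cdot,c))$ with real $a_c$ is an eigenfunction, follows immediately from \eqref{id-linearity} and your third step.

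The bookkeeping you flag takes only a few lines.
\begin{itemize}
\item If $c,c'$ lie on a common critical cycle, that cycle gives $d(c,c')+d(c',c)=0$. The inequalities $d(x,c)\ge d(x,c')+d(c',c)$ and $d(x,c')\ge d(x,c)+d(c,c')$ then force $d(\cdot,c)=d(\cdot,c')+d(c',c)$.
\item Conversely, suppose $d(\cdot,c)=\max_{c'}(a_{c'}+d(\cdot,c'))$ over representatives $c'$ of other classes. Evaluating at $x=c$ gives $a_{c'}=-d(c,c')$ for some $c'$. Evaluating $d(\cdot,c)\ge a_{c'}+d(\cdot,c')$ at $x=c'$ then gives $d(c,c')+d(c',c)\ge0$, hence $=0$.
\item Split this zero-weight closed walk into simple cycles. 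Each has nonpositive weight and they sum to zero, so all are critical. This puts $c$ and $c'$ in the same class, a contradiction.
\end{itemize}
So one generator per class forms a minimal generating set, as you expected.
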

\par\end{Small}

\begin{Thm}[the compact case, \cite{dan1990}]\label{thm:compact}
If $X$ is a compact topological space and the Lagrangian
$b\in C(X\times X)$ is everywhere finite and jointly continuous
in $x,y$, then there exists a unique eigenvalue $\l$.
Continuous eigenfunctions $f^*\in C(X)$ exist, but need not
be unique. \qed
\end{Thm}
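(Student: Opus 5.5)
The plan is a fixed-point argument on a quotient space of continuous functions, followed by a comparison argument for uniqueness of $\l$. Work in $C(X)$ modulo constants.

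\textbf{Regularity.} Since $X\times X$ is compact and $b$ is jointly continuous, $b$ is bounded and uniformly continuous in $x$, uniformly in $y$. Let $\omega$ be a modulus with $|b(x,y)-b(x',y)|\le\omega(x,x')$ for all $y$; on a general compact space one phrases this through the uniform structure or finite open covers. For any function $f$ bounded above, $\B f(x)=\sup_y(b(x,y)+f(y))$ is then finite, continuous, and has the same modulus $\omega$, independently of $f$. Its oscillation obeys
$$
\operatorname{osc}\B f\le \max b-\min b=:M .
$$
So the image of $\B$ lands in a fixed equicontinuous family.

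\textbf{Existence.} Let $K=\{f\in C(X): f\text{ has modulus }\omega,\ \max f=0\}$. This set is convex (a convex combination preserves the modulus), closed, and bounded in sup norm: since $f$ takes values in $[-M',0]$ and $X$ is compact and connected-free, I would use the bound from $\B$ and restrict to $\operatorname{osc}f\le M$. By Arzel\`a--Ascoli $K$ is compact in $C(X)$. Define
$$
\Phi(f)=\B f-\max\B f .
$$
$\B$ is $1$-Lipschitz in sup norm, being monotone and commuting with constants, so $\Phi$ is continuous, and $\Phi(K)\subseteq K$. Schauder's theorem then gives $f^*=\Phi(f^*)$, that is, $\B f^*=f^*+\l$ with $\l=\max\B f^*$. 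This yields continuous eigenfunctions.

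\textbf{Uniqueness of $\l$.} Suppose $\B g=g+\mu$ with $g$ nontrivial, say with values in $\Rbar$. The first step is to show $g$ is finite. Because $b$ is finite everywhere, $\B g(x)\ge b(x,y_0)+g(y_0)>-\infty$ for any $y_0$ with $g(y_0)$ finite, so $g=\B g-\mu$ is finite. It is also bounded above, at least when $\B g$ is finite, and then continuous by the regularity step. Given two continuous eigenfunctions $f,g$ with eigenvalues $\l,\mu$, boundedness gives $f\le g+C$. Monotonicity and constant-commutation then give
$$
f+n\l=\B^nf\le \B^n g+C=g+n\mu+C .
$$
Dividing by $n$ and letting $n\to\infty$ yields $\l\le\mu$, and by symmetry $\l=\mu$.

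\textbf{Non-uniqueness.} A single example suffices: take $b\equiv0$ on a two-point or any nontrivial $X$. Then $\B f\equiv\max f$ and the eigenvalue is $0$, so $f^*=0$ works. So does any continuous $f\le0$ with $\max f=0$ attained, provided $\B f=0=f$, which forces $f\equiv0$; this particular example fails. A better choice is $b(x,y)=-d(y,\text{diag-ish})$, or more simply the finite case $b=\begin{pmatrix}0&-1\\-1&0\end{pmatrix}$ on $X=\{1,2\}$. There $f=(0,s)$ is an eigenfunction with $\l=0$ for every $|s|\le1$.

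\textbf{Main obstacle.} The main obstacle is the compactness step on a general compact topological space rather than a metric one, where equicontinuity must be phrased via the uniform structure. A second point is ensuring that the oscillation bound keeps $K$ invariant under $\Phi$.
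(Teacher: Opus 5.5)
The paper states this theorem without proof, citing \cite{dan1990}, so there is no argument of the paper to compare with. Your plan is standard and most of it is correct: a Schauder fixed point for the Bellman map modulo constants, a comparison argument for the eigenvalue, and a finite counterexample. The following parts are fine as written:
\begin{itemize}
\item the reduction of an arbitrary nontrivial eigenfunction $g\in\Rbar(X)$ to a finite, bounded, continuous one;
\item the estimate $f+n\l\le g+n\mu+C$, which gives uniqueness of $\l$;
\item the two-point example $b_{11}=b_{22}=0$, $b_{12}=b_{21}=-1$, whose eigenfunctions $(0,s)$, $|s|\le1$, for $\l=0$ are not unique modulo constants.
\end{itemize}
The abandoned attempts with $b\equiv0$ and ``diag-ish'' should simply be deleted.

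There is one false step in the existence part: your set $K$ is \emph{not} convex. Convex combinations preserve the modulus and the oscillation bound, but not the normalization $\max f=0$. Take your own two-point example, where $\max_y|b(1,y)-b(2,y)|=1=\max b-\min b$. The functions $(0,-1)$ and $(-1,0)$ both lie in $K$, but their midpoint has maximum $-\tfrac12$. So Schauder's theorem cannot be applied to $\Phi\colon K\to K$ as written.

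The repair is immediate, and it is what ``working modulo constants'' should mean. Fix $x_0\in X$, normalize by $f(x_0)=0$, and set $\Phi(f)=\B f-(\B f)(x_0)$. The resulting set is cut out by a linear condition and convex inequalities, so it is convex. Schauder then gives $\B f^*=f^*+\l$ with $\l=(\B f^*)(x_0)$. Alternatively, $f\mapsto f-f(x_0)$ maps your $K$ homeomorphically onto this compact convex set, so $K$ has the fixed point property anyway.

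Your declared ``main obstacle'' also disappears with the right modulus. Take $\omega(x,x')=d_b(x,x')=\max_y|b(x,y)-b(x',y)|$. Then:
\begin{itemize}
\item a tube-lemma argument makes $x\mapsto b(x,\cdot)$ continuous into $C(X)$ with the sup norm, so $d_b$ is a continuous pseudometric on $X$;
\item every $\B f$ with $f$ bounded above and not $\equiv-\infty$ is $1$-Lipschitz for $d_b$;
\item $d_b$-Lipschitz functions are equicontinuous, so Arzel\`a--Ascoli applies on an arbitrary compact space without invoking uniform structures;
\item $\operatorname{osc}f\le\operatorname{diam}_{d_b}X\le\max b-\min b$ holds automatically, so the separate oscillation constraint and the remark about connectedness are unnecessary.
\end{itemize}
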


The result extends to the quasi-compact setting, where $\B^N$
is compact for some finite $N$, see \cite{bellman}.
Indeed, if $g\in C(X)$ is a continuous eigenfunction of $\B^N$
with eigenvalue $N\l$, then the ``$\oplus$-average''
\begin{equation}\label{averageing}
f=g\oplus(\B g-\l)\oplus(\B^2g-2\l)
\oplus\cdots\oplus(\B^{N-1}g-(N-1)\l)
\end{equation}
satisfies $\B f=f+\l$.

\begin{Thm}[the strictly concave case, \cite{bellman}]
\label{thm:strictly-concave}
If $X$ is a finite dimensional compact convex subset,
the Lagrangian $b:X\times X\to\R$ is continuous and strictly jointly concave in $x,y$,
and $b(x,x)$ reaches its maximum at an interior point
$x_*\in X$, then $\B$ has a unique eigenvalue
$\l^*=b(x_*,x_*)$, associated with a unique
(modulo an additive constant) finite-valued continuous concave eigenfunction on $X$.
\qed
\end{Thm}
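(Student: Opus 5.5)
Existence will come from the compact case, concavity from the structure of $\B$, and uniqueness of $\l^*$ and of the eigenfunction from the interior stationary point $x_*$. By Theorem~\ref{thm:compact}, since $X$ is compact and $b\in C(X\times X)$, there is a unique eigenvalue $\l$ and a continuous eigenfunction. To get a \emph{concave} one, I would check that $\B$ preserves concavity. If $f$ is concave, then $b(x,y)+f(y)$ is jointly concave in $(x,y)$, and the partial supremum over $y$ of a jointly concave function is concave in $x$. Starting from $f_0\equiv0$, I would look at $\B^T 0 - T\l$. Their oscillations should be uniformly bounded, e.g.\ via the bound $|\B^Tf-\B^Tg|\le\max|f-g|$ together with the existence of the continuous eigenfunction $f^*$, which gives $|\B^T0-T\l-f^*|\le\max|f^*|$. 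Concave functions on a compact convex set with a uniform sup bound are locally equi-Lipschitz in the interior. The Cesàro-type averages $\frac1T\sum_{k<T}(\B^k0-k\l)$ are concave as well. So I would instead take a limit point of $\B^T0-T\l$ (normalized), or more robustly the $\oplus$-free construction $\limsup$ of the concave iterates. This produces a concave eigenfunction, though continuity up to the boundary needs care; it should follow because $\B f$ is automatically continuous when $b$ is continuous on a compact set.

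\textbf{The eigenvalue.} Let $f$ be any eigenfunction. Plugging $y=x$ gives $\l+f(x)\ge b(x,x)+f(x)$, so $\l\ge b(x_*,x_*)$. For the reverse inequality, take an infinite $f$-extremal from any $x_0$. Summing gives $\sum_{t<T}b(x_t,x_{t+1})=T\l+f(x_0)-f(x_T)$. Joint concavity yields $\frac1T\sum b(x_t,x_{t+1})\le b(\bar x,\bar y)$, where $\bar x,\bar y$ are the averages of $x_0,\dots,x_{T-1}$ and of $x_1,\dots,x_T$, and $|\bar x-\bar y|\le\operatorname{diam}X/T$. Letting $T\to\infty$ and using uniform continuity gives $\l\le\max_x b(x,x)=b(x_*,x_*)$. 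Uniqueness of $\l^*$ thus holds with no concavity of $f$ needed.

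\textbf{Uniqueness modulo constants (main obstacle).} Because $x_*$ is the strict maximizer of $b(x,x)$ and is interior, I would show that every $f^*$-extremal converges to $x_*$, a turnpike argument. Write $\delta(x,y)=\l^*+f(x)-f(y)-b(x,y)\ge0$, whose sum along an extremal vanishes. Strict concavity gives a support functional at the interior point $(x_*,x_*)$: there is $p$ with $b(x,y)\le\l^*+p\cdot(x-y)$, strictly unless $x=y=x_*$. Hence $b-p\cdot x+p\cdot y-\l^*$ is a nil-type Lagrangian vanishing only at $(x_*,x_*)$, and the difference $g=f-p\cdot x$ satisfies $\B' g=g$ for this nonpositive kernel. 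Along any $g$-extremal, $g(x_{t+1})-g(x_t)=-b'(x_t,x_{t+1})\ge0$, and $g$ is bounded, so $b'(x_t,x_{t+1})\to0$; by compactness and strictness, $x_t\to x_*$. Therefore $g(x_0)=g(x_*)+\sum b'(x_t,x_{t+1})$ over the extremal, which equals $g(x_*)+V(x_0)$, where $V(x_0)=\sup$ of $\sum b'$ over trajectories converging to $x_*$. The proof of this step requires continuity of $g$ at $x_*$, which holds since a concave function is continuous at an interior point. It also requires the upper bound for arbitrary trajectories, obtained by the same telescoping. Thus $f=p\cdot x+V+\mathrm{const}$ is determined up to a constant. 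The delicate points I expect are the existence of the supporting $p$ with strict inequality off $(x_*,x_*)$ and the continuity of $f$ at $x_*$; both use interiority of $x_*$ essentially.
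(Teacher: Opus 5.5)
The paper only quotes this theorem from the literature without proof, so your argument has to stand on its own.

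Your identification of the eigenvalue and your uniqueness argument are sound. The point you flagged closes by the chain rule. With $\phi(x)=b(x,x)$ and $(x_*,x_*)$ interior, $\partial\phi(x_*)=\{p_1+p_2:(p_1,p_2)\in\partial b(x_*,x_*)\}\ni 0$. This gives a supergradient of the form $(p,-p)$, and strict concavity makes the inequality strict off $(x_*,x_*)$. Boundedness of $g$ is automatic: any nontrivial eigenfunction of a finite continuous kernel on compact $X$ is finite and bounded.

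The genuine gap is the existence of a \emph{concave} eigenfunction; none of your constructions produces one.
\begin{enumerate}
\item A subsequential limit $g$ of $g_T=\B^T0-T\lambda$ only satisfies $\B g-\lambda=\lim_k g_{T_k+1}$. This equals $g$ only if $g_{T+1}-g_T\to0$ along the subsequence, which you have not shown. For general Bellman operators the normalized iterates can oscillate, and convergence here would itself be a turnpike theorem.
\item A $\limsup$ of concave functions need not be concave (alternate $x$ and $1-x$ on $[0,1]$). Moreover, $\limsup$ and $\liminf$ of the iterates satisfy only the one-sided relations $\B\bar g\ge\bar g+\lambda$ and $\B\underline g\le\underline g+\lambda$.
\item Ces\`aro means are concave, but they are not eigenfunctions, since $\B$ is not additive.
\end{enumerate}

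The repair is already inside your proof. Take the continuous eigenfunction $f_0$ supplied by Theorem~\ref{thm:compact}. Your uniqueness step uses no concavity there, because continuity at $x_*$ and attainment of the argmax are given. Hence $f_0=\langle p,x\rangle+V+c$.

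Moreover, $V$ is concave. If $(x_t)$ and $(x'_t)$ are trajectories converging to $x_*$, so is $(\theta x_t+(1-\theta)x'_t)$, which stays in $X$ by convexity. Joint concavity of $b'$ then gives $V(\theta x_0+(1-\theta)x'_0)\ge\theta V(x_0)+(1-\theta)V(x'_0)$. Thus $f_0$ is concave, which gives existence. The same computation gives uniqueness modulo constants among all continuous eigenfunctions, not only concave ones.

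Alternatively, you can get a concave eigenfunction directly from Schauder's theorem. Apply it to $f\mapsto\B f-(\B f)(x_*)$ on the set of concave $f\in C(X)$ with $f(x_*)=0$ and modulus of continuity at most $\omega(r)=\sup\{|b(x,y)-b(x',y)|:|x-x'|\le r\}$. This set is compact and convex, and the map sends it into itself.
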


\begin{Small}
Related results had been obtained much earlier by Romanovskii
\cite{Rom1964}. For a continuous transition function $b(x,y)$
on a compact convex state space $X$, he characterized the
asymptotic average gain in terms of closed trajectories and
a dual Bellman-type inequality; under concavity assumptions
he also obtained turnpike-type estimates for finite-horizon
optimal trajectories.

In a different, nonconcave setting, Sobolevskii~\cite{Sobolevskii1999} established uniqueness up to an additive constant of a continuous periodic idempotent Bellman eigenfunction when the associated Aubry--Mather rotation number is irrational.

Investigation of the spectral equation did not stop with
these vintage results, but continued in a different attire.
After the standard exponentiation, passing from
$f\in\Rbar(X)$ to $F=\exp f$, the additive homogeneity $\B(f+c)=\B f+c$
of Bellman operators becomes positive first-order homogeneity, while $\oplus$-additivity remains preservation of maxima.
This brings us, via inverse Maslov dequantization, to the
``nonlinear Frobenius--Perron problem'' \cite{GG2004},
which subsequently developed into a substantial theory.
Further details can be found in the proceedings
\cite{LM2005}, especially \cite{AGW2005}; many of these
results can also be expressed in the language of max-plus
semialgebra, $(\oplus,\otimes)=(\max,+)$.
I do not intend to \emph{catch up} with, let alone
\emph{overtake}\footnote{See \secref{sec:spectral},
p.~\pageref{sec:spectral}.} these developments:
the focus of the present paper lies elsewhere.

\par\end{Small}

\section{Calculus of variations on the infinite interval and the Hamiltonian flow}\label{sec:variations}

We briefly recall the classical calculus of variations in its modern geometric formulation, following Arnold \cite{mmkm}, with particular attention to the Hamiltonian description of extremals and the geometry of invariant Lagrangian manifolds.

\subsection{Continuous time optimization problems: overview}\label{sec:technical}
Let $[0,T]\subseteq\R$ be a finite interval on the real line, and $L\:X\times\R^n\to\R$, $X\subseteq\R^n$, a function called the (genuine) Lagrangian. In parallel with the formulation \eqref{finite-sum} for any sufficiently regular function $x\:[0,T]\to X$ we can form the functional  
\begin{equation}\label{T-integral}
\int_0^T L\bigl(x(t),\dot x(t)\bigr)\,\mathrm dt + f\bigl(x(T)\bigr)
\end{equation}
and look for a trajectory $x(t)$ which \emph{maximizes}\footnote{Usually in the Calculus of Variations (and Mechanics) the problem is to minimize the integral functional as above. Our approach was motivated by problems arising from Mathematical Economics.} the above functional over all admissible trajectories constrained by the initial condition $x(0)=x_0\in X$. 

Contrary to the discrete problem, the variational problem \eqref{T-integral} is \emph{infinite-dimensional} even for finite horizon $T$ (recall that a finite discrete time admissible trajectory is a point in a suitable finite-dimensional set). This requires a number of assumptions on $L$ and $X$ merely to guarantee the existence of solutions. For simplicity, we impose rather restrictive assumptions in order not to be distracted by technicalities.
\begin{enumerate}
 \item $L$ is a smooth function jointly in the variables $x,v$;
 \item In the unconstrained discussion one may take $X=\R^n$; however, it should be stressed that the variables $x$ and $v$ play different roles (state and velocity respectively), so we will retain  the notation $X$ to stress the geometric meaning of some formulas (most notably the notation $T^*X$ for the symplectic manifold which will appear below);
 \item $L(x,\cdot)$ is strictly concave function of $v\in\R^n$;
 \item As $|v|\to+\infty$, the Lagrangian tends to $-\infty$ sufficiently fast, say, $L(x,v)\le C-c|v|^p$, $C,c>0$, $p>1$ uniformly in $x\in X$ (the \emph{coercivity}). 
\end{enumerate} 

\begin{Small}

\subsection{Euler--Lagrange differential equations}
In a way completely similar to that of the discrete case, one can rather easily write down necessary conditions for optimality. They take the form of a second order ordinary differential equation, called the Euler-Lagrange equation: 
\begin{equation}\label{CEL}
\frac{{\mathrm d}}{{\mathrm dt}}\,\partial_v L\bigl(x(t),\dot x(t)\bigr)=\partial_x L\bigl(x(t),\dot x(t)\bigr),
\end{equation}
coupled with the boundary conditions (the second condition is called the condition of transversality),  
\begin{equation}\label{EL-transv}
x(0)=x_0\in X,\qquad \partial_v L(x(t),\dot x(t))+\mathrm df(x(t))\big|_{t=T}=0.
\end{equation}
Equation \eqref{CEL} is a second-order equation not yet resolved with respect to the second derivative $\ddot x(t)$: the left-hand side requires substituting the current values $(x(t),\dot x(t))$ into the partial derivative $\partial_v L(x,v)$ and then taking the total derivative of this composition with respect to $t$: this transforms \eqref{CEL} into the equation
\begin{equation}  
L_{vv}(x,\dot x)\, \ddot x+L_{vx}(x,\dot x)\,\dot x-L_x(x,\dot x)=0. 
\end{equation}
To resolve this equation with respect to $\ddot x$, one has to invert the \emph{symmetric matrix} $L_{vv}$. If it is non-degenerate at all points $(x,v)$ (for instance, negatively definite), then one can resolve \eqref{CEL} with respect to $\ddot x$ and define a continuous time dynamical system on $X\times\R^n$.

\begin{Rem}
The Euler--Lagrange equations \eqref{CEL} is a perfect tool to study finite-horizon optimization problems: one has to find solutions of this equation satisfying two boundary value conditions 
\eqref{EL-transv}. Yet unlike the standard Cauchy problem which asks to determine solution by the initial data $x(0)=x_0$, $\dot x(0)=v_0$, the two-point boundary value problem may well have no solutions or multiple solutions. This question is by no means technical. 
\end{Rem}   

\subsection{Legendre transform and the Hamiltonian systems}\label{sec:Legendre-transform}
The Euler--Lagrange equations \eqref{CEL} are fine, but in 1834-1835 it turned out that they can be rewritten in a different form after using a new variable. This new form, discovered by William Rowan Hamilton, was truly remarkable for several reasons, one of them the rich geometric structure which is behind these equations. 

\begin{Def}
Let $L\:X\times\R^n$, $L=L(x,v)$ be a smooth Lagrangian such that the Hessian matrix $L_{vv}$ is nondegenerate (say, strictly negative definite), and $p\in\R^n$ a new variable (called, depending on the area and up to sign, momentum in Mechanics, prices in Economics, and the linear dual to the variable $v\in\R^n$ in Linear algebra, which justifies a better notation $p\in \R^{n*}$).  

The Legendre transform of the Lagrangian $L$ is the function $H\:X\times\R^{n*}$, $(x,p)\mapsto H(x,p)$, called the \emph{Hamiltonian}, which (under our convention on signs) is defined as
\begin{equation}\label{Ham}
H(x,p)=\<p,v\>+L(x,v),
\end{equation} 
where $v = v(x,p)$ is solution to the algebraic equation 
\begin{equation}\label{implicit-v}
p=-L_v(x,v)=-\pd{L}{v}(x,v).
\end{equation}
The concavity and coercivity assumptions made above ensure global solvability; nondegeneracy of $L_{vv}$ ensures smooth local inversion.
\end{Def}

\begin{Rem}
Assuming that the Lagrangian $L(x,v)$ is concave in $v\in\R^n$ and satisfies the coercivity growth assumptions listed in \secref{sec:technical}, the Legendre transform can be defined without the smoothness/invertibility condition by the Fenchel formula
\begin{equation}\label{Fenchel}
H(x,p)=\max_{v\in\R^n}\bigl(\<p,v\>+L(x,v)\bigr),
\end{equation}
eventually taking extended-real-values from $\R\cup\{+\infty\}$.
\end{Rem}

\begin{Thm}[\cite{mmkm}]
If the trajectory $(x(t),\dot x(t))$ is a solution to the
Euler--Lagrange equation \eqref{CEL}, with
$x(t)\in\operatorname{int}X$, and the corresponding Hamiltonian
is differentiable in both variables, then the trajectory
$(x(t),p(t))$, where
$p(t)=-L_v\bigl(x(t),\dot x(t)\bigr)$,
satisfies the Hamiltonian equations
\begin{equation}\label{HDE}
\left\{
\begin{aligned}
\dot x=\phantom{-}&\pd{H(x,p)}p,\\
\dot p=-&\pd{H(x,p)}x.
\end{aligned}
\right.
\end{equation}
If the trajectory maximizes the action on $[0,T]$
with fixed initial state $x(0)=x_0$ and terminal payoff
$f(x(T))$, and the terminal state is interior to $X$,
then it also satisfies the transversality condition
$p(T)=\mathrm df(x(T))$.
\end{Thm}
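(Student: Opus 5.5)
The plan is to work directly from the sign convention \eqref{Ham}--\eqref{implicit-v}, so that the claim follows from the envelope identities for the Legendre transform. Along the trajectory set $p(t)=-L_v(x(t),\dot x(t))$. Under strict concavity, $v(x,p)$ is the unique solution of \eqref{implicit-v}; by nondegeneracy of $L_{vv}$ and the implicit function theorem it is smooth, so $H(x,p)=\langle p,v(x,p)\rangle+L(x,v(x,p))$ is smooth. (If only differentiability of $H$ is assumed, the same identities follow from the Fenchel formula \eqref{Fenchel} by Danskin's theorem, because the maximizer is unique.)

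\textbf{Step 1: envelope identities.} Differentiating $H$ and using $p+L_v(x,v)=0$ to cancel every term containing $\partial v/\partial x$ or $\partial v/\partial p$, we get
\begin{equation*}
H_p(x,p)=v(x,p),\qquad H_x(x,p)=L_x\bigl(x,v(x,p)\bigr).
\end{equation*}

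\textbf{Step 2: the Hamiltonian equations.} By construction $v(x(t),p(t))=\dot x(t)$, so the first identity gives $\dot x=H_p$ directly. For the second equation, differentiate $p(t)=-L_v(x,\dot x)$ and use the Euler--Lagrange equation \eqref{CEL}:
\begin{equation*}
\dot p=-\tfrac{\d}{\d t}L_v=-L_x(x,\dot x)=-H_x(x,p).
\end{equation*}
Interiority $x(t)\in\operatorname{int}X$ is needed only so that \eqref{CEL} holds as an unconstrained first-order condition.

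\textbf{Step 3: transversality.} For an optimal trajectory, take admissible variations $x+\varepsilon h$ with $h(0)=0$ and $h(T)$ arbitrary; this is allowed because $x(T)$ is interior. The first variation is
\begin{equation*}
\int_0^T\bigl(L_xh+L_v\dot h\bigr)\,\d t+\d f(x(T))h(T).
\end{equation*}
Integrating by parts and using \eqref{CEL}, the integral reduces to the boundary term, leaving
\begin{equation*}
\bigl(L_v(x(T),\dot x(T))+\d f(x(T))\bigr)h(T)=0.
\end{equation*}
Since $h(T)$ is arbitrary, this is exactly \eqref{EL-transv}. With $p=-L_v$ it reads $-p(T)+\d f(x(T))=0$, that is, $p(T)=\d f(x(T))$.

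\textbf{Main obstacle: regularity.} The real issue is regularity rather than algebra. The integration by parts needs $t\mapsto L_v(x(t),\dot x(t))$ to be differentiable, and deriving \eqref{CEL} for a maximizer needs $x\in C^2$, or at least a du Bois-Reymond argument. Following the paper's standing assumptions in \secref{sec:technical}, I would take the trajectory to be $C^2$ (a solution of \eqref{CEL} in the classical sense), and only remark that Tonelli-type regularity theory covers the general case.
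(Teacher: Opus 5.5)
Your proposal is correct and follows the standard argument. The paper gives no proof of its own and defers to Arnold's book, where the Hamiltonian equations are obtained exactly as in your Steps 1--2, by comparing differentials of $H$. Your sign bookkeeping for the paper's convention is right: $H_p=v$ and $H_x=+L_x$, so $\dot p=-\frac{\d}{\d t}L_v=-L_x=-H_x$. The transversality part is the routine first-variation computation behind \eqref{EL-transv}. One small point: admissibility of $x+\varepsilon h$ follows from the whole arc lying in $\operatorname{int}X$, as assumed in the first part, or from taking $h$ supported near $T$. Interiority of $x(T)$ alone does not give it.
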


\begin{Rem}
When trajectories meet $\partial X$, state constraints may
require a subdifferential formulation of the Hamiltonian
equations, with singular behavior of the momentum component;
see \cite{AiT89}.
\end{Rem}

\subsection{Symplectic geometry behind the Hamiltonian system}
Much later, the Hamiltonian system \eqref{HDE} was embedded in a very rich geometric context of symplectic geometry \cite{mmkm} which revealed hidden symmetries of the Hamiltonian systems. To recall only the basic tenets of the relevant construction: a symplectic manifold is a real even-dimensional manifold $M^{2n}$ equipped with an antisymmetric differential 2-form $\omega$ (the symplectic structure) which is closed, $\d \omega =0$ and nondegenerate (the map $\mathrm i\:T_x M\to T_x^*M$ defined below is an isomorphism between two real spaces $n$ $T_xM$ and $T^*_xM$). A vector field $Z$ on $M^{2n}$ is called \emph{Hamiltonian}, if the corresponding contraction is an exact 1-form, $\mathrm i_Z(\omega)=-\d H$, where $H\:M^{2n}\to\R$ is a real function on $M$ called the Hamiltonian of $Z$ which is denoted then usually as $X_H$. The most natural source of symplectic manifolds is the so called cotangent bundle $T^*X$ of a smooth $n$-dimensional manifold $X=X^n$ which consists of all pairs $(x,p)$, $x\in X$, $T^*_xX\owns p\:T_x X\to \R$ a dual vector (covector). Then $T^*X$ can be equipped by the structure of a $2n$-dimensional manifold carrying a 1-form $\alpha=p\,\d x$. Its exterior derivative $\omega=\d\alpha$ is a symplectic structure on $M=T^*X^n=M^{2n}$. One can immediately check that the system \eqref{HDE} is indeed a Hamiltonian vector field.  The Euler-Lagrange equations \eqref{CEL} are transformed using the Legendre transform into much more symmetric form \eqref{HDE}.

\par\end{Small}

\subsection{Convex/concave Hamiltonians. Saddle points and invariant manifolds}\label{sec:saddleH}
Now we add to the technical conditions listed in \secref{sec:technical} the crucial \emph{concavity condition} on the Lagrangian.
\begin{Def}\label{def:UC}
We say that $L$ is \emph{strictly jointly concave} if its full Hessian
$D^2_{(x,v)}L$ is negative definite. This is the convention used below,
and later for discrete Lagrangians; it is stronger than strict concavity
in the usual sense.
\end{Def}
Assume henceforth that $X\subset\R^n$ is compact and convex and that
$L$ satisfies the technical conditions of \secref{sec:technical} and
is strictly jointly concave. Then the Hamiltonian \eqref{Ham} is
strictly convex in $p$ and strictly concave in $x$.
The function $L(x,0)$ has a unique maximizer $x_*\in X$:
\begin{equation}\label{stat-max}
 x_*=\argmax_{x\in X}L(x,0).
\end{equation}
For the following local Hamiltonian analysis, we additionally assume
that $x_*\in\operatorname{int}X$, so that $L_x(x_*,0)=0$.
Put $p_*=-L_v(x_*,0)$.

\begin{Prop}[see \cite{Rockafellar}]
Under the above assumptions, the corresponding
Hamiltonian vector field possesses the following properties:
\begin{enumerate}
\item
The point $(x_*,p_*)$ is a singular point of the Hamiltonian vector
field \eqref{HDE} on $T^*X$.

\item
This point is hyperbolic: exactly $n$ eigenvalues of the corresponding
linearization matrix lie in the left half-plane
$\C_-=\{\Re\l<0\}$, while exactly $n$ others lie in the right
half-plane $\C_+=\{\Re\l>0\}$.

\item
The contracting invariant subspace $E_-$ projects isomorphically onto
the coordinate $x$-space, while the expanding invariant subspace $E_+$
projects isomorphically onto the coordinate $p$-space.
\end{enumerate}
\end{Prop}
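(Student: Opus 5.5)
Write $z=(x,p)$ and $H(x,p)=\max_v(\langle p,v\rangle+L(x,v))$. The plan has three steps: verify the singular point directly, compute the linearization in block form, and then read off both the spectrum and the invariant subspaces from a quadratic form built out of these blocks.

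\emph{Step 1: $(x_*,p_*)$ is singular.} By the envelope theorem, $H_p(x,p)=v(x,p)$, the maximizer with $p=-L_v(x,v)$. At $p_*=-L_v(x_*,0)$ this maximizer is $v=0$, so $\dot x=H_p=0$. Again by the envelope theorem, $H_x(x,p)=L_x(x,v(x,p))$. At $(x_*,p_*)$ this equals $L_x(x_*,0)=0$, because $x_*$ is an interior maximizer of $L(\cdot,0)$. So $\dot p=0$ as well.

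\emph{Step 2: block form of the linearization.} Write the Hessian of $H$ at the point as
\[
\begin{pmatrix} -Q & C\\ C^{\top} & P\end{pmatrix},
\]
where $P=H_{pp}$ and $-Q=H_{xx}$. Strict joint concavity of $L$ and the Legendre transform give $P=-L_{vv}^{-1}>0$. For $Q$, I would use the Schur complement: $H_{xx}=L_{xx}-L_{xv}L_{vv}^{-1}L_{vx}$, and this is negative definite because the full Hessian $D^2_{(x,v)}L$ is. Hence $Q>0$. The linearized field is then
\[
A=\begin{pmatrix} C^{\top} & P\\ Q & -C\end{pmatrix},
\]
a Hamiltonian matrix, so its spectrum is symmetric under $\l\mapsto-\l$.

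\emph{Step 3: no eigenvalues on the imaginary axis.} This is the key step. I take the quadratic form $\Phi(x,p)=\langle p,x\rangle$ and compute its derivative along $A$:
\[
\dot\Phi=\langle Qx-C p,\,x\rangle+\langle p,\,C^{\top}x+Pp\rangle=\langle Qx,x\rangle+\langle Pp,p\rangle .
\]
The cross terms cancel, so $\dot\Phi$ is positive definite. Suppose $A$ had an eigenvalue $i\omega$. Then there is a nonzero bounded real solution (periodic or constant) lying in the corresponding real invariant plane. Along it $\Phi$ stays bounded, yet its derivative is positive definite, which is impossible. So $A$ is hyperbolic, and by the $\pm$ symmetry exactly $n$ eigenvalues lie in each of $\C_-$ and $\C_+$.

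\emph{Step 4: projections of $E_\pm$.} Suppose $z=(0,p)\in E_-$ with $p\neq0$. The solution starting at $z$ tends to $0$, so $\Phi\to0$. Since $\dot\Phi>0$, $\Phi$ is strictly increasing along the solution, so $\Phi$ must be negative for all $t>0$. But $\Phi(z)=0$ at $t=0$, and then strict increase gives $\Phi>0$ for $t>0$, a contradiction. Hence $E_-\cap\{x=0\}=0$. Since $\dim E_-=n$, the subspace $E_-$ projects isomorphically onto the $x$-space.

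The same argument in reversed time handles $E_+$. A point $(x,0)\in E_+$ with $x\ne0$ has $\Phi\to0$ as $t\to-\infty$ while $\Phi$ is strictly increasing, so $\Phi>0$ for all $t$; at $t=0$, however, $\Phi=0$, a contradiction. Thus $E_+\cap\{p=0\}=0$, and $E_+$ projects isomorphically onto the $p$-space.

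The main obstacle is the sign bookkeeping in Step 2. I need $Q>0$ and $P>0$ with the correct orientation so that $\dot\Phi$ comes out definite rather than indefinite. This is where the stronger joint-concavity convention of Definition~\ref{def:UC} enters, through the Schur-complement formula for $H_{xx}$.
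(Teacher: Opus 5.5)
Your proposal is correct and follows the paper's sketch essentially step by step: the same quadratic form $\langle p-p_*,x-x_*\rangle$, whose derivative along the linearization becomes $\langle H_{pp}\,\delta p,\delta p\rangle-\langle H_{xx}\,\delta x,\delta x\rangle$ once the mixed terms cancel; the $\lambda\mapsto-\lambda$ symmetry together with the exclusion of periodic or constant solutions to get hyperbolicity; and the sign of this form on $E_\pm$ to rule out intersection with the planes $x=x_*$ and $p=p_*$. Your only additions are the explicit envelope-theorem check that $(x_*,p_*)$ is an equilibrium and the Schur-complement verification that $H_{xx}<0$, both of which the paper takes from its standing assumptions.
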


\begin{Small}
\begin{proof}[Sketch of the proof]
For an arbitrary Lagrangian one has no special reasons to look for stationary solutions of the equations \eqref{CEL} yet under the present assumptions the solution $x(t)\equiv x_*$ with zero velocity $v(t)\equiv0$ satisfies $\partial_x L(x_*,0)=0$ as follows from \eqref{stat-max}. The stationary solution $(x_*,p_*)$ of \eqref{HDE} is the Legendre transform of this stationary solution, which can be verified by a direct inspection. 

The linearization $(2n\times2n)$-matrix of the Hamiltonian vector field \eqref{HDE} is equal to 
\begin{equation}
\begin{pmatrix}
\phantom{-}H_{px} & \phantom{-}H_{pp}
\\
-H_{xx} & -H_{xp}
\end{pmatrix}.
\label{Q-function}
\end{equation}
Consider the bilinear form which is a quadratic function
\begin{equation}\label{lyap}
V(x,p)=\<p-p_*,x-x_*\>\:T^*\R^n\to\R.
\end{equation}
Then the straightforward derivation of $V$ along the system linearized at $(x_*,p_*)$, yields 
\begin{equation}
\dot V=\<p-p_*,H_{pp}(p-p_*)\>-\<H_{xx}(x-x_*), x-x_*\>, 
\end{equation}
since the partial Hessians $H_{pp}>0$ and $H_{xx}<0$ are positive, resp., negative definite while the terms involving $H_{xp}=H_{px}^{T}$ cancel each other. In other words, the function $V$ is a Lyapunov function, monotonically growing along trajectories of \eqref{HDE}.

The hyperbolicity of the system \eqref{HDE} follows from the general fact that the eigenvalues of Hamiltonian singularities are symmetric with respect to the imaginary axis: together with each eigenvalue $\l\in\C$ it has an opposite eigenvalue $-\l\in\C$, see \cite{mmkm}. Thus it suffices to show that the imaginary axis $\mathrm i\R=\C\ssm(\C_-\cup\C_+)$ is free of such eigenvalues. This follows from the monotonicity of $V$. Indeed, a purely imaginary eigenvalue would yield a nonzero periodic real solution (or a nonzero stationary solution for the zero eigenvalue) of the linearized system, contradicting the strict increase of $V$ along every nonstationary solution. 

Denote by $E_\pm$ the invariant subspaces of the linearized system, corresponding to the partition of the eigenvalues between $\C_\pm$. Then the same monotonicity arguments prove that on the stable (contracting) subspace $E_-$ the function $V$ increases along any trajectory to its limit value $V(x_*,p_*)=0$ and hence must be strictly negative, so $E_-$  cannot intersect the coordinate plane $x=x_*$.  Similarly the expanding (unstable) space should be disjoint with the plane $p=p_*$.  
\end{proof}

\par\end{Small}

By the standard Stable Manifold Theorem (see, e.g., \cite{inv-manif}), the hyperbolic equilibrium (singular point of a vector field) $(x_*,p_*)$ 
possesses $n$-dimensional \emph{local} stable and unstable manifolds tangent at $(x_*,p_*)$ to the corresponding invariant subspaces. Denote them by $W_-$ and $W_+$ respectively: by construction, $W_-$ is a ``horizontal'' $n$-submanifold. Invariance means that trajectory passing through any point $x_0\in W_-$ remains forever on $W_-$ and tends to $(x_*,p_*)$ as $t\to+\infty$. A symmetric claim holds also for the other ``vertical'' invariant manifold $W_+$ which consists of orbits converging to the saddle point $(x_*,p_*)$ in the inverse time, $t\to-\infty$. Formally both $W_\pm$ should be understood as \emph{germs} of submanifolds. 

\begin{Prop}
Both (germs of) invariant manifolds $W_\pm$ are \emph{Lagrangian} submanifolds in the symplectic manifold $T^*X$, which means that the symplectic form $\omega=\d p\land \d x$ restricted on them vanishes identically.
\end{Prop}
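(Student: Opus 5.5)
The plan is to use the invariance of $\omega$ under the Hamiltonian flow and the contraction (resp. expansion) along $W_-$ (resp. $W_+$). Denote by $\Phi^t$ the local flow of \eqref{HDE}. First I would check that $\Phi^t$ preserves $\omega=\d p\land\d x$, i.e.\ $(\Phi^t)^*\omega=\omega$. This follows from Cartan's formula $\mathcal L_Z\omega=\d\,\mathrm i_Z\omega+\mathrm i_Z\d\omega=-\d\d H+0=0$ for the Hamiltonian field $Z=X_H$, and the fact that $\omega$ is closed.

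Next, fix a point $q\in W_-$ and two tangent vectors $\xi,\eta\in T_qW_-$. By invariance of $W_-$, the differential $D\Phi^t(q)$ maps $T_qW_-$ into $T_{\Phi^t q}W_-$. Symplecticity then gives
$$
\omega_q(\xi,\eta)=\omega_{\Phi^t q}\bigl(D\Phi^t(q)\xi,\,D\Phi^t(q)\eta\bigr)\qquad\forall t\ge0 .
$$

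The key estimate, which I expect to be the main technical step, is that $|D\Phi^t(q)\xi|\to0$ as $t\to+\infty$ for every $\xi\in T_qW_-$. To obtain it I would use the Stable Manifold Theorem in its standard form: $W_-$ is a $C^1$ graph tangent to $E_-$, and on it the restricted flow satisfies exponential estimates $|D\Phi^t|_{TW_-}|\le Ce^{-\mu t}$ for some $\mu>0$, as long as one stays in a small neighborhood of $(x_*,p_*)$. Alternatively, one can argue directly: along $\Phi^t q\to(x_*,p_*)$, the linearized cocycle is asymptotically the hyperbolic matrix \eqref{Q-function}, and tangent vectors to $W_-$ remain in a cone around $E_-$, where contraction holds.

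Since $\omega$ has constant coefficients, hence bounded norm, the right-hand side of the identity above tends to $0$, so $\omega_q(\xi,\eta)=0$. Thus $\omega|_{W_-}\equiv0$. Together with $\dim W_-=n$, this means $W_-$ is Lagrangian.

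For $W_+$ the argument is identical with $t\to-\infty$, because $\Phi^{-t}$ contracts $TW_+$.

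As a sanity check, at the equilibrium itself the statement reduces to $E_\pm$ being Lagrangian subspaces. That follows from the same argument applied to the linear flow, or from the fact that $\omega(\xi,\eta)$ for eigenvectors with eigenvalues $\l,\l'$ satisfies $(\l+\l')\omega(\xi,\eta)=0$, while $\l+\l'\ne0$ on $E_-$.
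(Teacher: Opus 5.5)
Your proof is correct and follows essentially the same route as the paper: invariance of $\omega$ under the Hamiltonian flow, contraction of $D\Phi^t$ on $TW_-$ as $t\to+\infty$ (inverse time for $W_+$), hence isotropy, and $\dim W_\pm=n$ gives the Lagrangian property. Your additions (Cartan's formula, the exponential estimate from the Stable Manifold Theorem, and the boundedness of the constant-coefficient form $\omega$) only make explicit details the paper leaves implicit.
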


\begin{proof}
The flow of any Hamiltonian vector field preserves the symplectic form \cite{mmkm}. On the other hand, since restriction of \eqref{HDE} is contracting (in the inverse time for $W_+$), in the limit $\omega$ must tend to zero. This is possible if and only if $\omega\big|_{W_\pm}\equiv0$. Indeed, for any $z\in W^-$ and $u,v\in T_zW^-$, the invariance of the
symplectic form under the Hamiltonian flow $\Phi^t$ implies that
\begin{equation}
\omega_z(u,v)
=
\omega_{\Phi^t(z)}\bigl(D\Phi^t(z)u,D\Phi^t(z)v\bigr).
\end{equation}
Since the restriction of the flow to $W^-$ is contracting,
$D\Phi^t(z)u,D\Phi^t(z)v\to0$ as $t\to+\infty$. Hence the right
hand side tends to zero, while the left hand side is independent of
$t$, and therefore $\omega_z(u,v)=0$. Thus $W^-$ is isotropic.
Since $\dim W^-=n=\frac12\dim T^*X$, it is Lagrangian. The same
argument applied to $W^+$ in the inverse time, $t\to-\infty$, proves
that $W^+$ is Lagrangian as well.
\end{proof}

\begin{Cor}[see \cite{mmkm}]\label{cor:Lagrangian}
The horizontal manifold $W_-$ locally can be represented as the derivative graph $p=\partial_x S_-(x)$ of a smooth function $S_-\:U\to \R$ on a neighborhood $U$ of $x_*$. Analogously, the vertical manifold can be represented as the derivative graph $x=\partial_p S_+(p)$ of a smooth function $S_+\:P\to\R$ on a neighborhood $P$ of $p_*$. \qed
\end{Cor}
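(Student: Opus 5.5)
The plan is to combine the transversality of $W_-$ and $W_+$ to the fibres or the zero-section (part (3) of the preceding Proposition) with the Lagrangian property just proved, and then apply the Poincaré lemma on a ball.

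First I show that $W_-$ is a graph over a neighborhood of $x_*$. The tangent space $T_{(x_*,p_*)}W_-$ equals $E_-$, and by part (3) of the Proposition $E_-$ projects isomorphically onto the $x$-space. So the restriction of the projection $\pi\:(x,p)\mapsto x$ to $W_-$ has an invertible differential at $(x_*,p_*)$. By the inverse function theorem, after shrinking the germ, $\pi|_{W_-}$ is a diffeomorphism onto a neighborhood $U$ of $x_*$, which I take to be a ball. Hence $W_-=\{(x,\f(x)):x\in U\}$ for a smooth map $\f\:U\to\R^{n*}$. The smoothness of $\f$ comes from the Stable Manifold Theorem, since the Hamiltonian is smooth.

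Next I convert the Lagrangian condition into closedness. Pull back the Liouville form $\alpha=p\,\d x$ by the graph map $x\mapsto(x,\f(x))$. This gives the $1$-form $\beta=\sum_i\f_i(x)\,\d x_i$ on $U$. Its differential $\d\beta$ is the pullback of $\omega=\d\alpha$, which vanishes because $W_-$ is Lagrangian. Explicitly, $\partial_j\f_i=\partial_i\f_j$, so the Jacobian of $\f$ is symmetric. Since $U$ is a ball, the Poincaré lemma gives $S_-$ with $\d S_-=\beta$. One can write it explicitly as $S_-(x)=\int_0^1\<\f(x_*+s(x-x_*)),x-x_*\>\,\d s$. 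Then $p=\partial_xS_-(x)$ on $W_-$.

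The vertical manifold is handled by exchanging the roles of $x$ and $p$. By part (3), $E_+$ projects isomorphically onto the $p$-space, so $W_+=\{(\psi(p),p):p\in P\}$ for a ball $P$ around $p_*$. Use the form $x\,\d p$, whose differential is $\d x\wedge\d p=-\omega$ and which therefore also vanishes on $W_+$. Its pullback $\sum_i\psi_i(p)\,\d p_i$ is then closed, and the Poincaré lemma gives $S_+$ with $x=\partial_pS_+(p)$.

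I expect no real obstacle. The only point needing care is the shrinking of the germ so that the projection is injective and its image is a ball, which is what makes the Poincaré lemma applicable. The sign conventions for $\omega$ affect nothing, since only the vanishing of $\omega$ is used.
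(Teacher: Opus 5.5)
Your proof is correct and follows the same route the paper takes. The paper only cites Arnold here, but it uses exactly this reasoning (a horizontal Lagrangian manifold is the graph of a closed $1$-form, hence locally of $\d S$) in the Section~3 remarks on Lagrangian graphs and in the proof of Theorem~\ref{thm:local-stable-Lagrangian}, with transversality coming from part~(3) of the preceding Proposition, as in your argument.
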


\begin{Small}
\subsection{Bellman semigroups in continuous time: local symplectic interpretation}
For a terminal bonus $f$ the continuous-time Bellman operator is
formally defined by
\begin{equation}\label{c-bellman}
 (\B^T f)(a)=\sup_{x(\cdot)}\left\{
 \int_0^T L(x(t),\dot x(t))\,\d t+f(x(T))\right\},
 \qquad x(0)=a,
\end{equation}
where the supremum runs over admissible absolutely continuous paths.
Whenever the variational problems are well posed, dynamic programming
gives $\B^{T+S}=\B^T\B^S$. We do not claim that this semigroup
preserves any class of smooth terminal functions.

Suppose that $u(T,x)=(\B^T f)(x)$ is smooth in a region where the
state constraint is inactive. The dynamic programming principle then
gives the Hamilton--Jacobi equation
\begin{equation}\label{HJ}
 \partial_Tu(T,x)=H(x,\partial_xu(T,x)),\qquad u(0,x)=f(x).
\end{equation}
With our sign convention \eqref{HDE}, the graph
$p=\partial_xu(T,x)$ evolves under the \emph{inverse} Hamiltonian
flow, as long as the graph remains smooth.

The stable Lagrangian manifold constructed above gives a local
stationary solution. Indeed, its generating function $S_-$ satisfies
\begin{equation}\label{local-stationary-HJ}
 H(x,\partial_xS_-(x))=H(x_*,p_*)=L(x_*,0),\qquad x\in U,
\end{equation}
since $H$ is constant along Hamiltonian trajectories and all
trajectories in $W_-$ converge to $(x_*,p_*)$.
Thus $u(T,x)=S_-(x)+T L(x_*,0)$ is a local classical solution
of \eqref{HJ}. This is the local Hamilton--Jacobi counterpart of
the Bellman spectral equation. Identifying it with the value of a
global Bellman semigroup would require additional verification and
is not asserted here.

\begin{Rem}
The same geometric picture also describes long but finite
optimal trajectories. Under suitable concavity and regularity
assumptions, the classical turnpike theorems assert that such
trajectories spend most of their lifetime near the hyperbolic
stationary point, with possible deviations confined to
initial and terminal boundary layers. The underlying mechanism
is the strict monotonicity of the Lyapunov function $V$,
whose derivative vanishes only at the stationary point.
For details, see \cites{dan-87,AiT89}.
\end{Rem}

\par\end{Small}

\subsection{Lyapunov hyperbolicity}\label{sec:lyapunov}
In fact, the function 
\begin{equation}\label{V-single}
V(x,p)=\<p-p_*,x-x_*\>
\end{equation} 
introduced in \eqref{lyap} is the Lyapunov function not just for linearization of the Hamiltonian system \eqref{HDE}, but for the initial nonlinear system itself \eqref{HDE}.  But even this is a specific partial case of a more general structure, as we shall show in a moment. 

Consider the case where $X=\R^n$ is an affine subset and $M=T^*X=\R^n\times\R^{n*}$ is a linear symplectic $2n$-space equipped with the natural pairing $(x,p)\mapsto\<p,x\>$. Then any function $H\:M\to\R$ generates the Hamiltonian flow \eqref{HDE}. Moreover, we can consider non-autonomous Hamiltonians of the form $H=H(t,x,p)$ explicitly depending on time $t\in\R$. The equations will still have the same form, but the Hamiltonian ceases to be constant along orbits of the flow:
\begin{equation}
\tfrac{\d}{\d t} H (t,x,p)=\tfrac{\partial}{\partial t} H(t,x,p).
\end{equation}

Now we can take two identical copies $M_1\simeq M_2\simeq M$ and their $4n$-dimensional Cartesian product $M_1\times M_2$ with the coordinates $(x_1,p_1,x_2,p_2)$. Then any Hamiltonian flow with $H\:M\to\R$ can be diagonally lifted on this Cartesian square flow as follows:
\begin{equation}\label{H-square}
\left\{
\begin{aligned}
\dot x_i=&\phantom{-}\dfrac{\partial H(t,x_i,p_i)}{\partial p_i},
\\
\dot p_i=&-\dfrac{\partial H(t,x_i,p_i)}{\partial x_i}, 
\end{aligned} 
\right.
\qquad i=1,2.
\end{equation}
Trajectories of this Cartesian flow are simply (ordered) pairs of trajectories $(x_1(t),p_1(t))$, $(x_2(t),p_2(t))$ of the initial Hamiltonian flow \eqref{HDE} on $M$. 

\begin{Def}[Lyapunov structure]\label{def:lyap-str}
The quadratic form 
\begin{equation}\label{lyap-str}
V\:M_1\times M_2\to\R, \quad V(x_1,p_1,x_2,p_2)=\<p_1-p_2,x_1-x_2\>,
\end{equation}
will be called the \emph{Lyapunov structure} on the Cartesian square.
\end{Def} 
It is obviously symmetric with respect to the permutation of the indices $i=1,2$. Algebraically the form $V$ on $\R^{4n}$ has rank $2n$, null space (the diagonal $x_1=x_2$, $p_1=p_2$) also of dimension $2n$. After factorization by the diagonal the form becomes hyperbolic nondegenerate with the signature $(n,n)$. 

Assume that $H=H(t,x,p)$ is a Hamiltonian, concave in $x\in\R^n$ and convex in $p\in\R^{n*}$. The following argument does not require autonomy and remains valid for $H=H(t,x,p)$.
\begin{Thm}\label{thm:lyap+}
If the Hamiltonian $H\:\R\times\R^n\times\R^{n*}$ is convex/concave as above, then the Lyapunov form \eqref{lyap-str} is non-decreasing along any orbit of the diagonal flow \eqref{H-square}.
\end{Thm}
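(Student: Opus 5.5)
The plan is to differentiate $V$ along an orbit of the diagonal flow \eqref{H-square} and to read off the sign of the derivative from monotonicity properties of the gradients of a convex/concave function.

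Let $(x_i(t),p_i(t))$, $i=1,2$, be two trajectories of \eqref{HDE} for the same Hamiltonian $H(t,\cdot,\cdot)$. Put $\Delta x=x_1-x_2$ and $\Delta p=p_1-p_2$. Then
\begin{equation*}
\tfrac{\d}{\d t}V=\<\dot p_1-\dot p_2,\Delta x\>+\<\Delta p,\dot x_1-\dot x_2\>
=-\<H_x(t,x_1,p_1)-H_x(t,x_2,p_2),\Delta x\>+\<\Delta p,H_p(t,x_1,p_1)-H_p(t,x_2,p_2)\>.
\end{equation*}
No term involving $\partial_tH$ appears, because $V$ does not depend on $t$ explicitly and both trajectories are evaluated at the same $t$. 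This is why autonomy is not needed.

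The key step is to recognise the right-hand side as the monotonicity pairing of the ``saddle gradient'' $G(z)=(-H_x(z),H_p(z))$, with $z=(x,p)$:
\begin{equation*}
\tfrac{\d}{\d t}V=\<G(z_1)-G(z_2),z_1-z_2\>.
\end{equation*}
For a function that is concave in $x$ and convex in $p$, the operator $G$ is monotone, as in Rockafellar's theory of saddle functions \cite{Rockafellar}. To prove this directly, I would write down four inequalities, each with $t$ fixed throughout. Convexity in $p$ at the two points $(x_1,p_1)$ and $(x_1,p_2)$ gives
\begin{equation*}
H(x_1,p_2)\ge H(x_1,p_1)+\<H_p(x_1,p_1),p_2-p_1\>.
\end{equation*}
The analogous inequality holds at $(x_2,p_2)$ and $(x_2,p_1)$. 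Concavity in $x$ gives
\begin{equation*}
H(x_2,p_1)\le H(x_1,p_1)+\<H_x(x_1,p_1),x_2-x_1\>,
\end{equation*}
and symmetrically at $(x_2,p_2)$ and $(x_1,p_2)$. Adding the four inequalities with appropriate signs, the four function values $H(x_i,p_j)$ cancel. What remains is exactly $\<G(z_1)-G(z_2),z_1-z_2\>\ge0$. Hence $V$ is non-decreasing along every orbit.

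The main obstacle is the bookkeeping of signs in this four-term summation. One has to check that the mixed values $H(x_1,p_2)$ and $H(x_2,p_1)$ cancel rather than double, and that the result has the sign convention matching \eqref{HDE}. I would check the result against the linearized computation in the sketch of proof for the hyperbolicity proposition. There, taking $z_2=(x_*,p_*)$ stationary should give $\<\Delta p,H_{pp}\Delta p\>-\<H_{xx}\Delta x,\Delta x\>\ge0$, and in that check the mixed terms $H_{xp}$ cancel. An alternative route, valid if $H$ is $C^2$, is to write each gradient difference as an integral along the segment $z_2+s(z_1-z_2)$. The integrand is then $\<\Delta p,H_{pp}\Delta p\>-\<\Delta x,H_{xx}\Delta x\>$, since the $H_{xp}$ cross terms cancel by symmetry of the Hessian, and this integrand is $\ge0$ pointwise. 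Differentiability of $H$ is needed in any case so that the flow \eqref{H-square} is defined.
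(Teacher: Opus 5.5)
Your proposal is correct and follows essentially the same route as the paper: differentiate $V$ along the diagonal flow and substitute the Hamiltonian equations. Then bound $\dot V$ from below by summing the four supporting-plane inequalities (concavity in $x$ and convexity in $p$ at $(x_1,p_1)$ and $(x_2,p_2)$), in which all four values $H(x_i,p_j)$ cancel to give $\dot V\ge0$. Your reformulation as monotonicity of the saddle operator $G=(-H_x,H_p)$ and the alternative $C^2$ segment-integration argument are harmless extras, and your sign bookkeeping closes up exactly as you anticipate.
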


\begin{proof}
The proof is absolutely elementary. The full derivative of $V$ by virtue of \eqref{H-square} is equal to 
\begin{equation}
\frac{\d V}{\d t}=\<\dot p_1 -\dot p_2,x_1-x_2\>+\<p_1-p_2, \dot x_1-\dot x_2\>.
\end{equation}
Afterwards we can consider $t$ as a fixed and drop it from the notation for greater transparency. We can replace the derivatives $\dot x_i,\dot p_i$ by their values from the right hand side of the system \eqref{H-square}. Recall that the concavity of a differentiable function $f=f(z)$ gives $f(b)\le f(a)+\<\partial_z f(a),b-a\>$ for any $a,b\in\R^n$; for convex functions the inequality is reversed. 

Consider four points $(x_i,p_j)\in\R^n\times\R^{n*}$ and denote $h_{ij}=H(t,x_i,p_j)$ (they may depend on $t$). Then we have the following four inequalities,
\begin{equation*}
\begin{aligned}
h_{21}-h_{11}
&\le \<H_x(t,x_1,p_1),x_2-x_1\>,
&
h_{12}-h_{22}
&\le \<H_x(t,x_2,p_2),x_1-x_2\>,
\\
h_{12}-h_{11}
&\ge \<H_p(t,x_1,p_1),p_2-p_1\>,
&
h_{21}-h_{22}
&\ge \<H_p(t,x_2,p_2),p_1-p_2\>.
\end{aligned}
\end{equation*}
On the other hand, the formula for the full derivative after substitution can be transformed as follows,
\begin{equation}\label{Vdot-hij}
\begin{aligned}
\dot V
={}&
-\<H_x(t,x_1,p_1)-H_x(t,x_2,p_2),x_1-x_2\>
\\
&
+\<H_p(t,x_1,p_1)-H_p(t,x_2,p_2),p_1-p_2\>
\\
\ge{}&
-(h_{11}+h_{22}-h_{12}-h_{21})
+(h_{11}+h_{22}-h_{12}-h_{21})
=0.
\end{aligned}
\end{equation}
The mysterious cancellation is, of course, a consequence of the specific form of the Hamiltonian structure.
\end{proof}

\begin{Rem}
It should be stressed that the Lyapunov structure $V$ as it is defined by \eqref{lyap-str} on $\R^{2n}\times\R^{2n*}$ is a universal  algebraic object and does not depend on the choice of the Hamiltonian. This universal structure affects Hamiltonian dynamics in the saddle case. Its multiplicative counterpart $Q$ defined on $\R^{2n}_+\times\R^{2n*}_+$ has an analogous nature. Both are  moulds that can be used to construct the Lyapunov functions (functions monotonic along trajectories of flows) for special classes of Hamiltonial flows. This explains why the name of A. M. Lyapunov is affixed to these structures.  
\end{Rem}

\begin{Small}
\begin{Rem}
The above proof actually works under the sole assumption that $H$ is a saddle (convex/concave) Hamiltonian. No differentiability should be assumed, if by the partial derivatives $H_x,H_p$ we understand any elements of the corresponding subdifferentials/superdifferentials, see \cite{rtr}. 

Moreover, in the case where the Hamiltonian is allowed to take extended-real-values in $\R\cup\{\pm\infty\}$ while keeping the appropriate semicontinuity, the Hamiltonian system \eqref{HDE} ceases to be the necessary condition for optimality of the corresponding problem because of the so called phase constraints: the supporting trajectory $(x(t),p(t))$ will be not absolutely continuous anymore, and in the right hand side \eqref{HDE} one would have to add singular measures. Yet these measures have sign so that the monotonicity of the Lyapunov form persists, see \cite{func-an}. 

The above proof is transparent enough to determine explicitly the instances where the derivative $\dot V$ can vanish rather than remain positive: usually these instances are associated with the loci where $H$ is non-strictly convex/concave. 
\end{Rem}

\begin{Rem}
Theorem~\ref{thm:lyap+} actually explains the origin of the Lyapunov function \eqref{V-single}. If $(x_*,p_*)$ is a saddle point of the (stationary) saddle Hamiltonian $H(x,p)$, then the corresponding stationary solution of \eqref{HDE} can be plugged in the Lyapunov structure form, which yields precisely the Lyapunov function \eqref{V-single} for other (nonstationary) solutions of \eqref{HDE}.
\end{Rem} 
\par\end{Small}

\subsection{Lyapunov hyperbolicity: projective flavor}\label{sec:m-lyapunov}
\begin{Small}
The (additive) Lyapunov structure has an immediate multiplicative (projective) analog. Assume that in addition to the convexity/concavity assumptions, the Hamiltonian $H$ is defined only for $x\in\R^n_+$, $p\in\R^{n*}$ and is positive homogeneous of degree $1$ in each variable:
\begin{equation}
\forall \l,\mu\in\R_+\qquad H(t,\l x,\mu p)=\l\mu\cdot H(t, x,p).
\end{equation}
An important class of such Hamiltonians arises from Lagrangians which are indicator functions of closed convex technological cones $L=L_K(x,v)$,  $K\subseteq\R^n_+\times\R^n$, 
$$
L=L_K(x,v)=\begin{cases} 0,\qquad &(x,v)\in K, \\ -\infty &\text{ otherwise}.
\end{cases}
$$ 
Here $v$ is a continuous-time velocity, which need not be nonnegative. In economics such Lagrangians correspond to a special case of the so called Leontieff--von Neumann--Gale models of technological growth. The cone $K$ is called \emph{the technological cone}: in the discrete-time setting $K\subseteq\{(x,y)\in\R^n_+\times\R^n_+\}$ means that transition from the state $x$ to the state $y$ is technologically feasible, see \cite{mak-rub}. There are natural assumptions on $K$: convexity, closedness and absence of the horn of plenty: $(0,v)\notin K$ for any $v\ne 0$. To ensure that the admissible velocity set is nonempty at each nonzero state, we additionally assume that for every $x\in\R^n_+\setminus\{0\}$ there exists a finite $\lambda_x<0$ such that $(x,\lambda_x x)\in K$ (with $K$ here understood in state--velocity coordinates). The continuous-time analogue was discussed in detail in  \cites{dan-87,AiT89}.   

In this case the role of the quadratic form $V$, \eqref{lyap-str}, is played by a simpler looking quartic expression $Q$, called the \emph{multiplicative Lyapunov structure}. 
\begin{Thm}\label{thm:mlyap}
For the Cartesian flow associated with a saddle bihomogeneous Hamiltonian $H\:\R\times\R^n_+\times\R^{n*}_+$, the multiplicative Lyapunov structure, the quartic expression
\begin{equation}\label{V-star}
Q(x_1,p_1,x_2,p_2)=\<p_1,x_2\>\cdot\<p_2,x_1\>, \quad x_i\in\R^n_+,\ p_i\in\R^{n*}_+
\end{equation}
is monotonically non-increasing along solutions of \eqref{H-square}.
\end{Thm}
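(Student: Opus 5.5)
The plan is to run the same four-point comparison as in the proof of Theorem~\ref{thm:lyap+}, with positive homogeneity taking the place of the affine first-order inequalities. The extra structure should make each factor of $Q$ separately monotone. Write $h_{ij}=H(t,x_i,p_j)$ and differentiate one factor along \eqref{H-square}:
\begin{equation*}
\frac{\d}{\d t}\<p_1,x_2\>=-\<H_x(t,x_1,p_1),x_2\>+\<p_1,H_p(t,x_2,p_2)\>.
\end{equation*}
I will bound the two terms using Euler's identity together with concavity in $x$ and convexity in $p$.

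First I record two inequalities for a positively $1$-homogeneous function. For a concave such $g$, Euler's identity gives $g(a)=\<\partial g(a),a\>$, and the supergradient inequality $g(b)\le g(a)+\<\partial g(a),b-a\>$ then collapses to $g(b)\le\<\partial g(a),b\>$. For a convex $1$-homogeneous function the reverse holds, $g(b)\ge\<\partial g(a),b\>$. Applying the first with $g=H(t,\cdot,p_1)$, $a=x_1$, $b=x_2$ gives $h_{21}\le\<H_x(t,x_1,p_1),x_2\>$. Applying the second with $g=H(t,x_2,\cdot)$, $a=p_2$, $b=p_1$ gives $h_{21}\ge\<H_p(t,x_2,p_2),p_1\>$. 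Substituting these,
\begin{equation*}
\frac{\d}{\d t}\<p_1,x_2\>\le -h_{21}+h_{21}=0 .
\end{equation*}
By the symmetric argument, exchanging the indices $1\leftrightarrow2$ and using $h_{12}$, we get $\frac{\d}{\d t}\<p_2,x_1\>\le0$.

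It remains to combine the two factors. Since $x_i\in\R^n_+$ and $p_i\in\R^{n*}_+$, both pairings are nonnegative. A product of two nonnegative nonincreasing (absolutely continuous) functions is nonincreasing, because
\begin{equation*}
\dot Q=\<p_1,x_2\>\tfrac{\d}{\d t}\<p_2,x_1\>+\<p_2,x_1\>\tfrac{\d}{\d t}\<p_1,x_2\>\le0 .
\end{equation*}
This proves the monotonicity of $Q$. Time dependence of $H$ plays no role, since $t$ is frozen in every inequality, exactly as in Theorem~\ref{thm:lyap+}.

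The main obstacle is the regularity in the intended examples, where $H$ is the Legendre transform of an indicator $L_K$. There $H$ is typically nondifferentiable, and possibly extended-valued at the boundary of the orthant. I will therefore interpret $H_x,H_p$ as elements of the super- and subdifferentials, as in the Remark following Theorem~\ref{thm:lyap+}. Both the Euler identity and the homogeneous supergradient inequality remain valid for such selections, since they follow from concavity and homogeneity alone. I will also need the trajectories to stay in the closed orthants, so that the nonnegativity of the pairings is preserved. I will take this as part of the hypothesis "solutions of \eqref{H-square}" on the stated domain rather than prove it.
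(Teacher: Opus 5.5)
Your proposal is correct and matches the paper's proof essentially step for step. You use the same four inequalities, obtained by collapsing the supporting-plane inequalities for concavity in $x$ and convexity in $p$ via the Euler identities. You conclude, as the paper does, that each cross-pairing $\<p_1,x_2\>$ and $\<p_2,x_1\>$ is separately non-increasing, and you pass to the product $Q$ using nonnegativity on the orthants.

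Your extra remarks, that the Euler identity holds for arbitrary super/subgradient selections and that the trajectories must stay in the closed orthants, are consistent with the paper's tacit assumptions and do not change the argument.
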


\begin{proof}
As before, after writing the derivatives we may regard $t$ as a fixed
parameter. Consider the four points $(x_i,p_j)$, $i,j=1,2$ and denote $h_{ij}=H(t,x_i,p_j)$ exactly as before.
By concavity of $H$ in $x$, convexity in $p$, and the Euler identities
\begin{equation}\label{Euler-bi}
 \<H_x(t,x,p),x\>=H(t,x,p)
 =
 \<H_p(t,x,p),p\>.
\end{equation}
coming from the bihomogeneity of $H$, we have
\begin{equation}\label{mult-four-ineq}
\begin{aligned}
h_{21}
&\le \<H_x(t,x_1,p_1),x_2\>,
&
h_{12}
&\le \<H_x(t,x_2,p_2),x_1\>,
\\
h_{21}
&\ge \<H_p(t,x_2,p_2),p_1\>,
&
h_{12}
&\ge \<H_p(t,x_1,p_1),p_2\>.
\end{aligned}
\end{equation}
Indeed, the first two inequalities follow from concavity and
positive homogeneity in $x$, while the last two follow from convexity
and positive homogeneity in $p$.

Consider now the two cross-pairings occurring in the numerator.
Using \eqref{H-square} and \eqref{mult-four-ineq}, we obtain
\begin{equation}\label{cross-pairings}
\begin{aligned}
\frac{\d}{\d t}\<p_1,x_2\>
&=
-\<H_x(t,x_1,p_1),x_2\>
+\<p_1,H_p(t,x_2,p_2)\>
\\
&\le -h_{21}+h_{21}=0,
\\[1mm]
\frac{\d}{\d t}\<p_2,x_1\>
&=
-\<H_x(t,x_2,p_2),x_1\>
+\<p_2,H_p(t,x_1,p_1)\>
\\
&\le -h_{12}+h_{12}=0.
\end{aligned}
\end{equation}
Thus, each of the two cross-pairings is non-increasing along the
Cartesian flow. Since on $\R^n_+\times\R^{n*}_+$, the pairings are non-negative, hence
their product $Q=\<p_1,x_2\>\<p_2,x_1\>$ 
is non-increasing as well. 
\end{proof}

\begin{Cor}
The rational function 
\begin{equation}\label{Qstar}
Q^\star(x_1,p_1,x_2,p_2)=
     \frac{\<p_1,x_2\>\<p_2,x_1\>}
     {\<p_1,x_1\>\<p_2,x_2\>}.
\end{equation}
is monotonically non-increasing for solutions \eqref{H-square} lying in the domain of this function, i.e. in the invariant domain $\<p_i,x_i\>\ne 0$.
\end{Cor}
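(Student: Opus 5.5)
The plan is to reduce the corollary to Theorem~\ref{thm:mlyap} by showing that the denominator $\<p_1,x_1\>\<p_2,x_2\>$ is not merely a normalization but is itself governed by the Hamiltonian flow. The key observation is that, for each single trajectory $(x_i(t),p_i(t))$ of \eqref{HDE}, the diagonal pairing satisfies
\[
\frac{\d}{\d t}\<p_i,x_i\>=-\<H_x(t,x_i,p_i),x_i\>+\<p_i,H_p(t,x_i,p_i)\>=-h_{ii}+h_{ii}=0,
\]
by the Euler identities \eqref{Euler-bi}. Thus $\<p_1,x_1\>$ and $\<p_2,x_2\>$ are separately first integrals of the Cartesian flow \eqref{H-square}, even in the nonautonomous case. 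In particular, the domain $\{\<p_i,x_i\>\ne0\}$ is invariant, which justifies the phrase ``invariant domain'' in the statement. On this domain the denominator is a nonzero constant along each orbit.

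With this in hand, write $Q^\star=Q/(c_1c_2)$, where $c_i=\<p_i,x_i\>$ are constant along the orbit. By Theorem~\ref{thm:mlyap}, $Q$ is non-increasing. Since $x_i\in\R^n_+$ and $p_i\in\R^{n*}_+$, each $c_i\ge0$; on the domain $c_i\ne0$ this gives $c_1c_2>0$. Dividing by a positive constant preserves monotonicity, so $Q^\star$ is non-increasing.

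The only delicate point is the sign of $c_1c_2$. If one allowed $p_i$ outside the positive orthant, the constant could be negative and the monotonicity would reverse. I would settle this by working, as Theorem~\ref{thm:mlyap} does, with $p_i\in\R^{n*}_+$. I would also note that the argument uses only the Euler identities, so it extends verbatim to super- and subgradients in the nonsmooth case.
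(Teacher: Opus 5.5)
Your proof is correct and follows the paper's argument: the Euler identities \eqref{Euler-bi} make each diagonal pairing $\langle p_i,x_i\rangle$ a first integral of \eqref{H-square}, so $Q^\star$ is $Q$ divided by a quantity that is constant along each orbit, and it inherits its monotonicity from Theorem~\ref{thm:mlyap}. Your explicit check that this constant is positive (because $x_i\in\R^n_+$ and $p_i\in\R^{n*}_+$) fills in a point that the paper's one-line proof leaves implicit.
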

\begin{proof}
The Euler identities \eqref{Euler-bi} imply that each of the terms in the denominator of $Q^\star$ is constant along \eqref{H-square}. 
\end{proof}

\begin{Rem}
The rational expression for $Q^\star$ is zero-homogeneous on each variable thus $Q^\star$ descends under independent positive rescalings of its four arguments to an open subset of a compact product of four projective simplices. We reiterate here that the Lyapunov structures $V$ and $Q$ are universal and do not depend on the choice of Hamiltonians. 
\end{Rem}

\begin{Rem}
In the same way as the additive Lyapunov structure in the stationary (autonomous) case generates a genuine Lyapunov function \eqref{V-single} if $(x_*,p_*)$ is a stationary solution (the saddle point of the Hamiltonian), the multiplicative Lyapunov structure $Q$ produces a genuine Lyapunov function for \eqref{HDE}. Suppose that in the autonomous bihomogeneous case the system possesses an exponential solution of the form
\begin{equation}\label{gale}
x(t)=\exp \l t\cdot x_*,\ p(t)=\exp (-\l t)\cdot  p_*.
\end{equation} 

The ray $\mathbb R_+x_*$, corresponding to what in the von Neumann--Gale terminology is a stationary (von Neumann) growth process, together with the growth exponent $\lambda$, is obtained from the associated static growth problem 
\begin{equation}\label{vN-ray}
\sup_{\l\in \R,\ x_*\in\R^n_+}\{\l: x_*\ne 0,\ (x_*,\l x_*)\in K\},
\end{equation}
(existence and attainability of the maximum requires certain economically meaningful assumption in the technological cone $K$, like closedness and absence of the horn of plenty). 
Then the function $\<p_*,x\>\<p,x_*\>$ will be a (non-increasing) Lyapunov function for \eqref{HDE}. 
\end{Rem} 
\par\end{Small}

\subsection{The goalposts}
The goal of this crash course in the classical calculus of variations was to show that the continuous time formulation of the stationary dynamic optimization problem allows a very geometric treatment using symplectic geometry and Hamiltonian dynamics. On the other hand, technically the discrete time formulation allows us to avoid many analytic difficulties. It would be great if in fact the two theories could be married into a single one, at least in the strictly jointly concave case. To do this, we would have to solve the interpolation problem, also known as the embedding problem.
We want to study this problem in different contexts, varying from a finite (discrete) set $X$ and all the way to the strongly concave operators on a compact subspace $X\Subset\R^n$. Before addressing the embedding problem, we first need to develop the discrete Hamiltonian counterpart of the continuous picture.

\section{Hamiltonian pseudocascades}
\label{sec:hamiltonian-cascades}

The discussion in the preceding section suggests why the Hamiltonian
formalism is so useful for variational problems on long and infinite
intervals.  In continuous time, passing from the Euler--Lagrange
second order equation to the Hamiltonian first order system reveals a
symplectic phase space, saddle points and their invariant manifolds.
Infinite extremals are naturally associated with the separatrices of
such a saddle, while extremals on a long finite interval shadow a
concatenation of an incoming and an outgoing separatrix: the former is
determined by the initial state, whereas the latter is selected by the
terminal term, transported backwards by the Bellman evolution.

The purpose of this section is to understand which part of this
picture survives in discrete time.  There is more than one answer to
the apparently innocent question of what a ``discrete Hamiltonian
system'' should mean.  We first recall the phenomenological answer of
Cass and Shell, then the variational answer of Marsden and West, and
finally formulate the intrinsic\emph{ analytic }construction which will be used in the
rest of the paper.  Most ingredients of the latter belong to the
standard theory of discrete mechanics.

\begin{Small}
\subsection{The economists' approach}\label{sec:economics}

The Hamiltonian approach to economic dynamics enjoyed a
remarkable period of popularity in the 1970s. A particularly
representative landmark was the 1976 special issue of the
Journal of Economic Theory, subsequently published as
\emph{The Hamiltonian Approach to Dynamic Economics},
edited by Cass and Shell \cites{cass-shell,cass-shell-intro}.
The attraction was not merely formal. The theory of
economic growth was seeking a unified treatment of
multisector models, and the Hamiltonian systems studied
by Rockafellar offered a powerful tool: a Lyapunov
function associated with the saddle geometry of
convex--concave Hamiltonians. Its monotonicity provided
a route to the stability and turnpike properties of
optimal economic trajectories.

Extending this machinery to discrete time proved less
straightforward. Cass and Shell approached the problem
through Hamiltonian functions representing economic
technologies and postulated difference equations
motivated by investment and capital rentals. These
relations retained enough of the saddle structure to
support useful Lyapunov arguments, but did not reproduce
the full geometry of continuous Hamiltonian dynamics.
In particular, even an autonomous Hamiltonian need not
be conserved by the resulting discrete evolution.
Viewed retrospectively, this was a revealing compromise:
the desired stability mechanism survived, while the
discrete equations were not obtained from a discrete
variational principle. The latter distinction will be
essential for our purposes.

\subsection{The variational approach of Marsden--West}

A substantially different perspective emerged from
discrete mechanics and the theory of variational
integrators, systematically developed and surveyed
by Marsden and West \cite{marsden-west}. The original
problem was numerical: how to discretize continuous
mechanical systems while preserving their geometric
structure. The decisive change of viewpoint was to
discretize the variational principle itself, rather
than the differential equations derived from it.
The resulting discrete Euler--Lagrange equations
inherit a symplectic structure through the discrete
Legendre transformations, without requiring a
distinguished Hamiltonian function generating
their evolution.

For our purposes, the importance of this construction
lies beyond its numerical origins. Once the discrete
variational principle is taken as the starting point,
there is no longer any need for a continuous-time
ancestor. An arbitrary regular discrete Lagrangian
gives rise to the same symplectic geometry, at least
locally. Thus a method designed to approximate
continuous dynamics also provides an intrinsic
geometric language for discrete variational problems.
This is the aspect of the Marsden--West framework
that we shall exploit below.

The original numerical motivation nevertheless
provides a useful consistency check. Whenever the
discrete Lagrangian is the exact action of a continuous
variational problem over a time interval of length
$h$, the corresponding discrete Hamiltonian evolution
coincides, with the appropriate normalization of
momenta, with the time-$h$ map of the continuous
Hamiltonian flow, wherever the relevant local maps
are defined. Thus the construction recovers the
continuous theory exactly when a continuous ancestor
is present, while remaining meaningful when there
is none.
\par\end{Small}

\subsection{Known caveats}

Transition from continuous to discrete time should not be expected to be a smooth ride (pun intended). One source of difficulties is the tension between compactness of the state space $X$ in stationary infinite-horizon optimization and the absence of active state constraints. In continuous time, the classical Euler--Lagrange differential equations describe extremals as long as the relevant variations are unconstrained. When an extremal $x(\cdot)$ reaches the boundary $\partial X$, the state constraint may destroy the absolute continuity of the supporting momentum trajectory $p(t)$, requiring a more general form of the Hamiltonian equations \eqref{HDE}. Remarkably, the resulting singular contributions may even reinforce Lyapunov hyperbolicity, as shown in \cite{func-an}.

In discrete time, state constraints necessarily manifest themselves differently. The transition from $t$ to $t+1$ involves a finite step rather than infinitesimal evolution. Consequently, local solvability of the discrete Euler--Lagrange equations does not guarantee a single-valued evolution on the entire admissible phase domain.

Ideally, the passage to discrete time would replace the flow of a smooth Hamiltonian vector field \eqref{HDE} by iterations of a smooth map $\Phi$, thus producing a discrete dynamical system, or a \emph{cascade}. However, the discrete variational equations generally define only a multivalued evolution, naturally represented by a symplectic correspondence. Even after selecting a smooth local branch, the resulting map need not preserve its domain, so its iterates need not be defined indefinitely. We shall call such locally invertible correspondences \emph{pseudocascades}; their individual local branches are ordinary maps. The prefix emphasizes the absence of a globally defined iterated dynamics, rather than any failure of the symplectic identities.

\subsection{Euler--Lagrange and Hamiltonian pseudocascades}

We therefore return to an arbitrary smooth discrete Lagrangian
$b\:X\times X\to\R$, not assumed to have a continuous-time ancestor in the sense of Marsden--West.
The interior extremals of the corresponding stationary dynamic optimization problem satisfy the discrete Euler--Lagrange equation.
\begin{equation}\label{del}
\partial_{y} b(x_{t-1},x_t)+\partial_{x} b(x_t,x_{t+1})=0.
\end{equation}

To write the (second order nonlinear) difference equation for extremals $\{x_t\}_{t=0}^{T}$ (finite or infinite for $T=+\infty$), this equation has to be solved in the form defining forward evolution, i.e., for any $t=1,2,\dots$, the value $x_{t+1}$ should be expressed as the function of $x_{t-1},x_t$. The value $x_0$ is specified as the initial condition of the extremal, thus the equation \eqref{del} would define a deterministic trajectory if the value $x_1$ is determined\footnote{In continuous time classical mechanics, simultaneous knowledge of $x_0$ and $x_1$ corresponds to specifying the initial positions and velocities of all particles of the system.}.  
The local solvability of \eqref{del} is governed by the mixed
derivatives of the discrete Lagrangian.

\begin{Def}\label{def:local-legendre}
The Lagrangian $b\:X\times X\to\R$ satisfies the \emph{local
(discrete) Legendre condition} if
\begin{equation}\label{local-legendre}
\det\partial^2_{xy}b(x,y)=\det\partial^2_{yx}b(x,y)\ne0
\end{equation}
at every point $(x,y)\in X\times X$.
\end{Def}

By the implicit function theorem, this condition makes
\eqref{del} \emph{locally} solvable in either time direction. 

\begin{Def}\label{def:Legendre-maps}
The two maps $\mathscr L^{\pm}:X\times X\to X\times\R^{n*}$, 
\begin{equation}\label{fiber}
\mathscr L^-(x,y)=\bigl(x,\partial_xb(x,y)\bigr),
\qquad
\mathscr L^+(x,y)=\bigl(y,-\partial_yb(x,y)\bigr),
\end{equation}
are called the \emph{Legendre maps} (or $b$-Legendre maps when the Lagrangian $b$ needs to be specified).
\end{Def}

Assuming local solvability of \eqref{del}, we introduce the (possibly multivalued and partially defined) Euler--Lagrange correspondence $\Xi$ by
\begin{equation}\label{EL-cascade}
\Xi(x,y)=\bigl\{(y,z)\in X\times X:
\partial_y b(x,y)+\partial_x b(y,z)=0\bigr\}.
\end{equation}
Thus $(y,z)\in\Xi(x,y)$ if and only if
\begin{equation}\label{matching-map}
\mathscr L^+(x,y)=\mathscr L^-(y,z).
\end{equation}

\begin{Def}\label{def:pseudocascade}
A \emph{pseudocascade} on a smooth manifold $M$ is a smooth
correspondence $\mathscr C\subseteq M\times M$ such that both
coordinate projections
\begin{equation}
\pi_1,\pi_2:\mathscr C\to M
\end{equation}
are local diffeomorphisms.

A trajectory of $\mathscr C$ is a finite or infinite sequence
$\{z_t\}$ satisfying $(z_t,z_{t+1})\in\mathscr C$ for every
consecutive pair of indices.
\end{Def}

\begin{Small}
\begin{Rem}
The above definition assumes that $M$ is a smooth manifold.
We shall use the same term for correspondences on manifolds
with boundary whose restrictions to the interior are
pseudocascades and which extend to the boundary as closed
correspondences.
\end{Rem}
\par\end{Small}

On every open region where the local Legendre condition holds, Definition \ref{def:pseudocascade} implies that the
Euler--Lagrange correspondence $\Xi$ in \eqref{EL-cascade}
restricts to a pseudocascade, which we shall call
the \emph{Lagrangian pseudocascade}.

The Legendre maps \eqref{fiber} allow us to introduce
the corresponding correspondence on the symplectic
phase space $X\times\R^{n*}$:
\begin{equation}\label{SC}
\Theta=\bigl\{(\mathscr L^-(x,y),\mathscr L^+(x,y)):\ (x,y)\in X\times X\bigr\}.
\end{equation}
Equivalently, $\Theta=\mathscr L^+\circ(\mathscr L^-)^{-1}$ as a correspondence.
The local Legendre condition guarantees that $\Theta$
is locally represented by symplectic diffeomorphisms, denoted by $\Phi$.
Likewise, local branches of $\Xi$ will be denoted by $\Psi$. Using these notations is convenient provided that every time talking about a specific $\Phi$ or $\Psi$, the domain and the choice of branch should be clear from the context.

Along any Euler--Lagrange trajectory, the local branches $\Psi$ and $\Phi$ can be chosen compatibly, so that
\begin{equation}\label{ConjPhiPsi}
\Phi\circ\mathscr L^-=\mathscr L^-\circ\Psi,
\qquad
\mathscr L^+\circ\Psi=\Phi\circ\mathscr L^+
\end{equation}
hold wherever the indicated compositions are defined along that trajectory. These are not identities between globally defined maps or unrestricted compositions of correspondences. Both identities are understood locally, on suitably chosen and mutually compatible domains of definition of the respective branches.

\begin{Def}\label{def:Ham-cascade}
The correspondence $\Theta$ defined by
\eqref{SC} is called the \emph{Hamiltonian correspondence}
generated by $b$. Its restriction to any regular region where the local Legendre condition holds is the \emph{Hamiltonian pseudocascade} generated by $b$.
\end{Def}

The basic geometric fact is standard in discrete mechanics, but its
proof is short enough to be more informative than a reference.

\begin{Def}\label{def:symplectic-pseudocascade}
Let $(M,\omega)$ be a symplectic manifold.
A pseudocascade $\mathscr C\subseteq M\times M$ is called
\emph{symplectic} if
\begin{equation}\label{symplectic-correspondence}
\left.(\pi_1^*\omega-\pi_2^*\omega)\right|_{\mathscr C}=0,
\end{equation}
where $\pi_1,\pi_2:M\times M\to M$ are the coordinate
projections.
\end{Def}

\begin{Thm}\label{thm:symplectic-cascade}
On any region where the local Legendre condition holds, the Lagrangian pseudocascade $\Xi$ is symplectic (on each local branch $\Psi$) with respect to a
$b$-dependent symplectic form $\varOmega$ on $X\times X$, 
\begin{equation}\label{Omega}
\varOmega
=\sum_{i,j=1}^n(\partial^2_{xy}b)_{ij}\,\d y_j\land\d x_i.
\end{equation}

The
Hamiltonian pseudocascade $\Theta$ is symplectic (on each local branch $\Phi$) with respect to the
canonical form $\omega=\d p\land\d x$ on $M=X\times\R^{n*}$.
\end{Thm}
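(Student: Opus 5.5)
The whole argument rests on two exact differentials obtained from the Legendre maps \eqref{fiber}. On $X\times X$ we pull back the canonical $1$-form $\alpha=p\,\d x=\sum_i p_i\,\d x_i$ along $\mathscr L^-$ and $\mathscr L^+$. This gives
\begin{equation*}
(\mathscr L^-)^*\alpha=\sum_i\partial_{x_i}b\,\d x_i,\qquad
(\mathscr L^+)^*\alpha=-\sum_j\partial_{y_j}b\,\d y_j .
\end{equation*}
Their difference is $(\mathscr L^-)^*\alpha-(\mathscr L^+)^*\alpha=\d b$, which is exact. Applying $\d$ shows that $(\mathscr L^-)^*\omega=(\mathscr L^+)^*\omega$ for $\omega=\d\alpha=\d p\land\d x$. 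We also compute this common $2$-form directly:
\begin{equation*}
(\mathscr L^-)^*\omega=\d\Bigl(\sum_i\partial_{x_i}b\,\d x_i\Bigr)=\sum_{i,j}(\partial^2_{xy}b)_{ij}\,\d y_j\land\d x_i=\varOmega ,
\end{equation*}
where the $\d x\land\d x$ terms drop out because $\partial^2_{xx}b$ is symmetric. The local Legendre condition \eqref{local-legendre} says exactly that $\varOmega$ is nondegenerate; equivalently, $\mathscr L^\pm$ are local diffeomorphisms, since their Jacobians are block triangular with $\pm\partial^2_{xy}b$ off the diagonal. So $\varOmega$ is a closed, nondegenerate form, i.e.\ a symplectic form on $X\times X$.

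For $\Theta$, fix a local branch $\Phi$. On a small domain we write it as $\Phi=\mathscr L^+\circ(\mathscr L^-)^{-1}$, using a local inverse of $\mathscr L^-$. Then
\begin{equation*}
\Phi^*\omega=((\mathscr L^-)^{-1})^*(\mathscr L^+)^*\omega=((\mathscr L^-)^{-1})^*(\mathscr L^-)^*\omega=\omega .
\end{equation*}
Along the graph of $\Phi$, this is the vanishing of $\pi_1^*\omega-\pi_2^*\omega$ required in \eqref{symplectic-correspondence}. An equivalent check is to parametrize $\Theta$ by $(x,y)$ through $(\mathscr L^-,\mathscr L^+)$: then $\pi_1^*\omega-\pi_2^*\omega$ pulls back to $(\mathscr L^-)^*\omega-(\mathscr L^+)^*\omega=\d\d b=0$. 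This version needs no choice of branch at all.

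For $\Xi$, fix a local branch $\Psi$ and use the compatibility relation \eqref{ConjPhiPsi}, namely $\mathscr L^-\circ\Psi=\Phi\circ\mathscr L^-$, on compatible domains. Then
\begin{equation*}
\Psi^*\varOmega=\Psi^*(\mathscr L^-)^*\omega=(\mathscr L^-)^*\Phi^*\omega=(\mathscr L^-)^*\omega=\varOmega .
\end{equation*}
An alternative route avoids \eqref{ConjPhiPsi}. The matching condition \eqref{matching-map} gives $\mathscr L^-\circ\Psi=\mathscr L^+$ on the domain of $\Psi$. Hence $\Psi^*\varOmega=\Psi^*(\mathscr L^-)^*\omega=(\mathscr L^+)^*\omega=\varOmega$, using the identity from the first paragraph once more.

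The computations themselves are routine. The main obstacle is bookkeeping of domains: $\Theta$ and $\Xi$ are only correspondences, so every pullback must be taken on an open set where the chosen local inverse of $\mathscr L^-$ and the branch $\Psi$ or $\Phi$ are honest diffeomorphisms. These branches must also be chosen compatibly, in the sense of \eqref{ConjPhiPsi}. We handle this by working near a single point $(x,y)$ of a trajectory, where the implicit function theorem supplies all the local inverses at once. We also check signs and index conventions in \eqref{Omega} against the ordering $\d y_j\land\d x_i$, because the stated formula is sensitive to which factor comes first.
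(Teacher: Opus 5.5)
Your proof is correct and follows essentially the same route as the paper. You pull back $\alpha=p\,\d x$ by $\mathscr L^\pm$, note that the difference is $\d b$, identify the common pullback of $\omega$ with $\varOmega$, and then use $\mathscr L^-\circ\Psi=\mathscr L^+$ and $\Phi=\mathscr L^+\circ(\mathscr L^-)^{-1}$ on local branches. Your explicit index computation of $\varOmega$ and your branch-free check of $\pi_1^*\omega-\pi_2^*\omega=0$ through the parametrization $(x,y)\mapsto(\mathscr L^-(x,y),\mathscr L^+(x,y))$ are welcome details the paper leaves implicit. There is one cosmetic slip: in the Jacobian of $\mathscr L^-$ the block $\partial^2_{xy}b$ is a diagonal block, not an off-diagonal one, though the conclusion that both Jacobians have determinant $\pm\det\partial^2_{xy}b$ is right.
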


\begin{proof}
Let $\alpha=p\,\d x$, so that $\omega=\d\alpha$.  By
\eqref{fiber},
\begin{equation*}
(\mathscr L^-)^*\alpha=\partial_{x} b\,\d x,
\qquad
(\mathscr L^+)^*\alpha=-\partial_{y} b\,\d y.
\end{equation*}
Their difference is $\d b$.  Hence
$(\mathscr L^-)^*\omega=(\mathscr L^+)^*\omega$ which coincides with $\varOmega$. 
The nondegeneracy of $\partial^2_{xy}b$ makes $\varOmega$ symplectic.  On compatible local branches,
$\mathscr L^+=\mathscr L^-\circ\Psi$ and
$\mathscr L^+=\Phi\circ\mathscr L^-$ yield
$\Psi^*\varOmega=\varOmega$ and $\Phi^*\omega=\omega$.
\end{proof}

Thus the passage from $\Xi$ to $\Theta$ does not create new dynamics;
it places the same dynamics on a phase space carrying the universal
canonical symplectic form.  This is the representation used below.

\subsection{Lagrangian submanifolds and discrete Hamilton--Jacobi equations}

Recall that an $n$-dimensional submanifold $W$ of the symplectic
phase space $(M,\omega)$ is called \emph{Lagrangian} if
$\omega\big|_W=0$. Locally, when $M=X\times\R^{n*}$ and the canonical projection
$\pi_x\:W\to X$ is nonsingular (we call then $W$ horizontal), such a manifold is the graph of a
closed $1$-form and hence, locally,
$
W=\{(x,\d S(x))\}
$
for a generating function $S$.

This elementary observation provides the geometric bridge from
symplectic pseudocascades to functional equations.

\begin{Prop}\label{prop:discrete-HJ}
Let $W=\operatorname{graph}\d S$ be a Lagrangian graph in the
symplectic phase space. Suppose that a smooth local branch $\Phi$
of $\Theta$ maps an open portion of $W$ into $W$.
Let $\Lambda_S$ be the corresponding smooth family of transitions
$(x,y)$, parameterized by their initial states. Then
$\partial_x b(x,y)=\d S(x)$ and
$\partial_y b(x,y)=-\d S(y)$ on $\Lambda_S$.
Consequently,
\begin{equation}\label{dhj}
b(x,y)+S(y)-S(x)=\operatorname{const},
\qquad (x,y)\in\Lambda_S,
\end{equation}
on each connected component of $\Lambda_S$.
\end{Prop}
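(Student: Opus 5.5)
The plan is to unwind the definition of $\Theta$ in \eqref{SC}. A branch $\Phi$ acts as $\mathscr L^-(x,y)\mapsto\mathscr L^+(x,y)$. So a point $(x,\d S(x))\in W$ is sent into $W$ exactly when there is a transition $(x,y)$ with
$\mathscr L^-(x,y)=(x,\d S(x))$ and $\mathscr L^+(x,y)=(y,\d S(y))$.
We take $\Lambda_S$ to be the set of these transitions, parameterized smoothly by $x$ through the local inverse of $\mathscr L^-$. This inverse exists by the local Legendre condition \eqref{local-legendre}, since $\partial_y\partial_x b$ is invertible. Writing $y=\psi(x)$ on $\Lambda_S$, the definitions \eqref{fiber} read off at once:
\begin{equation*}
\partial_x b(x,\psi(x))=\d S(x),\qquad -\partial_y b(x,\psi(x))=\d S(\psi(x)).
\end{equation*}
These are the first two claims, obtained by pure bookkeeping.

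For \eqref{dhj}, I will differentiate the function $F(x)=b(x,\psi(x))+S(\psi(x))-S(x)$ along the smooth family $\Lambda_S$ using the chain rule:
\begin{equation*}
\d F=\partial_x b\,\d x+\partial_y b\,\d\psi+\d S(\psi)\,\d\psi-\d S(x)\,\d x .
\end{equation*}
The two identities above cancel the terms pairwise, so $\d F=0$. Hence $F$ is locally constant, that is, constant on each connected component of $\Lambda_S$.

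The only delicate point is the well-posedness of the parameterization: $\Lambda_S$ should be a smooth manifold on which $x$ is a legitimate coordinate. I would handle this by restricting to the open portion of $W$ where the branch $\Phi$ is defined and maps into $W$. There $\Phi$ and the local inverse of $\mathscr L^-$ are diffeomorphisms, so $x\mapsto(x,\psi(x))$ is a smooth embedding. Note also that the argument never uses closedness of the form $\d S$ beyond $W$ being a graph of a differential, and it does not use the symplectic property of $\Phi$ (Theorem~\ref{thm:symplectic-cascade}) at all. Invariance of $W$ alone suffices.
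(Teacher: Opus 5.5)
Your argument is correct and is essentially the paper's: the two Legendre relations are read off directly from the definition of $\Theta$ through the maps $\mathscr L^\pm$. Constancy of $b(x,y)+S(y)-S(x)$ then follows by differentiating along the family and cancelling terms pairwise with both relations, exactly as in the step (2)$\Rightarrow$(1) of Theorem~\ref{thm:local-Bellman}, which amounts to restricting the identity $(\mathscr L^-)^*\alpha-(\mathscr L^+)^*\alpha=\d b$ from the proof of Theorem~\ref{thm:symplectic-cascade} to $\Lambda_S$.
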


Thus a Lagrangian graph $W=\operatorname{graph}\d S$ invariant under a local branch $\Phi$ of $\Theta$ gives rise naturally to a functional equation for its generating function $S$. Equations of this kind are commonly referred to as \emph{discrete Hamilton--Jacobi equations}. Their relation to discrete symplectic dynamics has been explored from several different viewpoints. Elnatanov and Schiff 
\cite{elnatanov-schiff} derived the discrete Hamilton--Jacobi
equation from canonical discrete dynamics; Lall and West
\cite{lall-west} formulated discrete Hamiltonian mechanics in terms
of generating functions; Ohsawa--Bloch--Leok
\cite{ohsawa-bloch-leok} developed the geometric discrete
Hamilton--Jacobi theory in terms of exact Lagrangian graphs; their linear theory explicitly identifies transversality to the vertical fibers as the condition under which a Lagrangian subspace is represented as $\operatorname{graph}\d S$. In the optimal-control setting,
Navasca \cite{navasca} obtained a local stable Lagrangian manifold
whose generating function is the optimal cost.

Notice that \eqref{dhj} is, at this stage, a geometric invariance equation on the Lagrangian correspondence $\Lambda_S$; it is not yet the Bellman equation, which involves maximization over all admissible transitions. The two will meet below under the concavity assumptions which select the invariant graph by optimization.

The above holds for any invariant Lagrangian manifold; moreover, there is no reason for such a manifold to exist globally. For the concave discrete Lagrangians considered in the next section, the Lyapunov and hyperbolic structure will provide precisely such a selection mechanism.

\section{Hamiltonian pseudocascades for concave Lagrangians}\label{sec:hyperbolic}
The preceding section introduced Hamiltonian pseudocascades as a geometric description of discrete variational dynamics. Motivated by the Lyapunov hyperbolicity of continuous-time Hamiltonian systems discussed in \secref{sec:lyapunov}, we now examine the structure imposed on these pseudocascades by concavity of the discrete Lagrangian. 

\begin{Def}
Throughout this section, strict concavity of a smooth Lagrangian means negative definiteness of its full Hessian. 
\end{Def}

\subsection{Lyapunov structure for Hamiltonian pseudocascades}
\label{sec:discrete-lyapunov}

For two phase points $(x_i,p_i)$, $i=1,2$, put
\begin{equation}\label{V-discrete}
V\big((x_1,p_1),(x_2,p_2)\big)
=\<p_1-p_2,x_1-x_2\>.
\end{equation}
In continuous time, the monotonicity of this function
along pairs of trajectories of saddle Hamiltonian systems
was the cornerstone of Lyapunov hyperbolicity.
Remarkably, its discrete counterpart follows directly
from the joint concavity of the Lagrangian, without
requiring a globally defined Hamiltonian cascade.

\begin{Thm}\label{thm:discrete-lyapunov}
Let $b\:X\times X\to\R$ be a smooth jointly concave discrete Lagrangian,
and let $\Theta$ be the Hamiltonian correspondence
generated by $b$ as in \eqref{SC}. Then, for any two trajectories
$(x_t^i,p_t^i)$, $i=1,2$, of $\Theta$,
\begin{equation}\label{V-monotone-discrete}
V(t+1)\ge V(t).
\end{equation}
If $b$ is strictly jointly concave, the inequality is
strict whenever the two corresponding transitions
$(x_t^1,x_{t+1}^1)$ and $(x_t^2,x_{t+1}^2)$
are distinct.
\end{Thm}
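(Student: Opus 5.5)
The plan is to express both $V(t)$ and $V(t+1)$ through the single transition $(x_t^i,x_{t+1}^i)$ of each trajectory, using the Legendre maps \eqref{fiber}. A trajectory of $\Theta$ is a sequence of phase points with $(x_t,p_t)=\mathscr L^-(x_t,x_{t+1})$ and $(x_{t+1},p_{t+1})=\mathscr L^+(x_t,x_{t+1})$. Hence
\[
p_t^i=\partial_x b(x_t^i,x_{t+1}^i),\qquad p_{t+1}^i=-\partial_y b(x_t^i,x_{t+1}^i).
\]
Write $z_i=(x_t^i,x_{t+1}^i)\in X\times X$ and $\Delta=z_1-z_2=(\xi,\eta)$, where $\xi=x_t^1-x_t^2$ and $\eta=x_{t+1}^1-x_{t+1}^2$. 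Then
\[
V(t+1)-V(t)=-\<\partial_y b(z_1)-\partial_y b(z_2),\eta\>-\<\partial_x b(z_1)-\partial_x b(z_2),\xi\>
=-\<\nabla b(z_1)-\nabla b(z_2),\,z_1-z_2\>.
\]
So the increment of $V$ is exactly minus the monotonicity pairing of the full gradient of $b$ on $X\times X$.

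The key step is the standard fact that the gradient of a concave function is a monotone (decreasing) operator: $\<\nabla b(z_1)-\nabla b(z_2),z_1-z_2\>\le0$. I would prove it in the same spirit as Theorem~\ref{thm:lyap+}, by adding the two supergradient inequalities
\[
b(z_2)\le b(z_1)+\<\nabla b(z_1),z_2-z_1\>,\qquad b(z_1)\le b(z_2)+\<\nabla b(z_2),z_1-z_2\>.
\]
The values of $b$ cancel in the sum, which gives $V(t+1)\ge V(t)$. For strictness under strict joint concavity, in the sense of a negative definite Hessian, I would integrate along the segment: $\<\nabla b(z_1)-\nabla b(z_2),\Delta\>=\int_0^1\<D^2b(z_2+s\Delta)\Delta,\Delta\>\,\d s<0$ whenever $\Delta\ne0$, that is, whenever the two transitions differ. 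This uses convexity of $X$, which is standing throughout, so that the segment stays in the domain.

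The only delicate point is the bookkeeping. I must check that the momenta assigned to the shared phase point $(x_{t+1},p_{t+1})$ by $\mathscr L^+$ of step $t$ and by $\mathscr L^-$ of step $t+1$ coincide, which is exactly the matching condition \eqref{matching-map}. This guarantees that $V(t+1)$ is unambiguous and that the formula above may be applied step by step. The sign conventions also need care: the minus sign in $\mathscr L^+$ is what turns the mixed-derivative terms into a full gradient pairing. No global branch $\Phi$ is needed, since the argument uses only the transitions.
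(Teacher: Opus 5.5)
Your proposal is correct and follows essentially the same route as the paper: you write both momenta through the single transition $(x_t^i,x_{t+1}^i)$ via the Legendre relations, then add the two supporting-plane inequalities of the concave function $b$ at $(x_t^1,x_{t+1}^1)$ and $(x_t^2,x_{t+1}^2)$, so that the values of $b$ cancel. Your integral of the Hessian along the segment, which uses convexity of $X$, spells out the strictness claim that the paper only asserts.
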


\begin{proof}
Write $(x_i,y_i)=(x_t^i,x_{t+1}^i)$ and
$(p_i,q_i)=(p_t^i,p_{t+1}^i)$.
By the Legendre relations,
$p_i=\partial_x b(x_i,y_i)$ and
$q_i=-\partial_y b(x_i,y_i)$.

The supporting-plane inequalities for the concave
function $b$, applied at $(x_1,y_1)$ and $(x_2,y_2)$,
give
\begin{equation}
\begin{aligned}
b(x_1,y_1)-b(x_2,y_2)
&\le
\<\partial_x b(x_2,y_2),x_1-x_2\>
+
\<\partial_y b(x_2,y_2),y_1-y_2\>,\\
b(x_2,y_2)-b(x_1,y_1)
&\le
\<\partial_x b(x_1,y_1),x_2-x_1\>
+
\<\partial_y b(x_1,y_1),y_2-y_1\>.
\end{aligned}
\end{equation}
Adding these inequalities and substituting the
Legendre relations yields
\begin{equation}
\<p_1-p_2,x_1-x_2\>
\le
\<q_1-q_2,y_1-y_2\>,
\end{equation}
which is precisely \eqref{V-monotone-discrete}.
Strict joint concavity gives strict inequality
for distinct transitions.
\end{proof}

\begin{Lem}[Asymptotic Lyapunov inequalities]
\label{lem:asymptotic-Lyapunov}
Let $b$ be a smooth jointly concave discrete Lagrangian generating a Hamiltonian pseudocascade $\Theta$ on the region containing the trajectories under consideration.
\begin{enumerate}
\item If two forward trajectories $z_t^1,z_t^2$ of $\Theta$
converge to the same phase point $z_*$, then
$V(z_0^1,z_0^2)\le0$. The inequality is strict if
$b$ is strictly jointly concave and the trajectories
are distinct.
\item Let $z_t$ be a forward trajectory converging to a
stationary point $z_*$, and let $(\xi_t,\eta_t)$ be a
nonzero tangent trajectory along it converging to zero.
If $D^2b$ is negative definite near $(x_*,x_*)$, then
$\<\eta_0,\xi_0\><0$.
\end{enumerate}
\end{Lem}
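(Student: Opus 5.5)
Both parts come from the discrete Lyapunov monotonicity of Theorem~\ref{thm:discrete-lyapunov}, combined with continuity of $V$ at the common limit.

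For part (1), put $V_t=V(z_t^1,z_t^2)=\<p_t^1-p_t^2,x_t^1-x_t^2\>$. Theorem~\ref{thm:discrete-lyapunov} makes $V_t$ nondecreasing. Both trajectories converge to $z_*$, so $p_t^1-p_t^2\to0$ and $x_t^1-x_t^2\to0$. Since $V$ is a continuous bilinear expression, $V_t\to0$. A nondecreasing sequence with limit $0$ has all terms $\le0$, so $V_0\le0$.

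For strictness, suppose $b$ is strictly jointly concave and $V_0=0$. Monotonicity then gives $V_t=0$ for all $t$. The strict clause of Theorem~\ref{thm:discrete-lyapunov} forces $(x_t^1,x_{t+1}^1)=(x_t^2,x_{t+1}^2)$ for every $t$. The momenta are determined by the transitions through the Legendre maps, $p_t^i=\partial_xb(x_t^i,x_{t+1}^i)$, so $z_t^1=z_t^2$ for all $t$. This contradicts distinctness. One should state that two trajectories are distinct when some pair of corresponding transitions differs; otherwise the Legendre relations identify them.

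For part (2), apply the same argument to the linearized pseudocascade along $z_t$. The tangent trajectory satisfies the variational Legendre relations
\[
\eta_t=b_{xx}\xi_t+b_{xy}\xi_{t+1},\qquad \eta_{t+1}=-b_{yx}\xi_t-b_{yy}\xi_{t+1},
\]
with second derivatives evaluated at $(x_t,x_{t+1})$. Substituting gives
\[
\<\eta_{t+1},\xi_{t+1}\>-\<\eta_t,\xi_t\>=-(\xi_t,\xi_{t+1})^{T}D^2b(x_t,x_{t+1})(\xi_t,\xi_{t+1}).
\]
This is the infinitesimal version of the two supporting-plane inequalities in Theorem~\ref{thm:discrete-lyapunov}, obtained equally well by applying that theorem to $z_t$ and a nearby trajectory, dividing by $\varepsilon^2$ and letting $\varepsilon\to0$.

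Since $z_t\to z_*=(x_*,p_*)$ and $z_*$ is stationary, eventually $(x_t,x_{t+1})$ lies in the neighborhood of $(x_*,x_*)$ where $D^2b<0$. From that time on, $\<\eta_t,\xi_t\>$ is strictly increasing whenever $(\xi_t,\xi_{t+1})\ne0$. The difference is always nonnegative wherever $b$ is concave, which the standing joint-concavity hypothesis supplies. As $(\xi_t,\eta_t)\to0$, we have $\<\eta_t,\xi_t\>\to0$, so $\<\eta_0,\xi_0\>\le0$. It remains to get strictness.

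The main obstacle is ruling out $\<\eta_0,\xi_0\>=0$. In that case all increments vanish, and in the definite region this forces $\xi_t=\xi_{t+1}=0$ for all large $t$. The Legendre relations then give $\eta_t=0$ as well. The local Legendre condition says $b_{xy}$ is invertible, so the linearized branch $D\Phi$ is invertible. Propagating backwards yields $(\xi_0,\eta_0)=0$, contradicting nontriviality. I will make this backward propagation explicit, and I will check that concavity, not merely the local definiteness near $(x_*,x_*)$, is what gives monotonicity along the initial segment.
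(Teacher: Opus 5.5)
Your proposal is correct and follows the paper's proof. Both parts rest on the same ingredients: monotonicity of $V$, or of $Q(\xi_t,\eta_t)$ via the identity $Q_{t+1}-Q_t=-(\xi_t,\xi_{t+1})^{*}D^2b\,(\xi_t,\xi_{t+1})$, together with convergence to zero; strictness then comes from strict concavity on distinct transitions in (1), and in (2) from strict increments near $(x_*,x_*)$ combined with the local Legendre condition, which prevents a nonzero tangent trajectory from ever vanishing. You argue strictness by contradiction, whereas the paper argues it directly, but the argument is the same.
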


\begin{proof}
For two trajectories, the Lyapunov function
$V(z_t^1,z_t^2)$ is nondecreasing by
Theorem~\ref{thm:discrete-lyapunov} and tends
to zero as both trajectories converge to
the same phase point. Strict concavity
makes the one-step inequality strict
whenever the corresponding transitions differ.

For the tangent trajectory $(\xi_t,\eta_t)$, write
$\xi_{t+1}$ for its state component at the next step.
Differentiating the Legendre relations along the trajectory gives
\begin{equation}
\begin{aligned}
\eta_t&=b_{xx}(x_t,x_{t+1})\xi_t
          +b_{xy}(x_t,x_{t+1})\xi_{t+1},\\
\eta_{t+1}&=-b_{yx}(x_t,x_{t+1})\xi_t
          -b_{yy}(x_t,x_{t+1})\xi_{t+1}.
\end{aligned}
\end{equation}
Consequently, for $Q(\xi,\eta)=\<\eta,\xi\>$,
\begin{equation}
Q(\xi_{t+1},\eta_{t+1})-Q(\xi_t,\eta_t)
=-\begin{pmatrix}\xi_t\\\xi_{t+1}\end{pmatrix}^{\!*}
D^2b(x_t,x_{t+1})
\begin{pmatrix}\xi_t\\\xi_{t+1}\end{pmatrix}\ge0.
\end{equation}
Since $(\xi_t,\eta_t)\to0$, the nondecreasing sequence
$Q(\xi_t,\eta_t)$ tends to zero, so $Q(\xi_0,\eta_0)\le0$.
If $D^2b$ is negative definite near $(x_*,x_*)$, the
one-step inequality is strict for all sufficiently large $t$
for which the tangent trajectory is nonzero. Since both projections of the pseudocascade are local diffeomorphisms along the entire trajectory, a nonzero tangent trajectory cannot vanish at any finite time. Hence $Q(\xi_0,\eta_0)<0$.
\end{proof}

\subsection{Stationary solutions and linearization}
\label{sec:special-cascades}

We begin with stationary trajectories of the Hamiltonian correspondence $\Theta$. Let $b$ be a smooth jointly concave discrete Lagrangian on $X\times X$, and assume that the restriction $x\mapsto b(x,x)$ attains its maximum at an interior point $x_*\in\operatorname{int}X$. Then $\partial_xb(x_*,x_*)+\partial_yb(x_*,x_*)=0$, so $z_*=(x_*,\partial_xb(x_*,x_*))$ defines a stationary trajectory of $\Theta$. Under the local discrete Legendre condition, $z_*$ is a fixed point of the corresponding local symplectic branch $\Phi$.

Fixing the stationary trajectory $z_*$ as one of the arguments of the pairwise Lyapunov function \eqref{V-discrete}, we obtain an ordinary Lyapunov function
\begin{equation}\label{stationary-Lyapunov}
V_*(x,p)=\<p-p_*,x-x_*\>.
\end{equation}
Its quadratic part at $z_*$ is the form $Q(\xi,\eta)=\<\eta,\xi\>$ appearing below. Thus the linearized dynamics inherits the Lyapunov structure established for pairs of trajectories.

The following result is pure linear algebra.

\begin{Thm}\label{thm:discrete-hyperbolicity}
Let $b$ be a smooth jointly concave discrete Lagrangian on $X\times X$, and assume that the function $x\mapsto b(x,x)$ attains its maximum at $x_*\in\operatorname{int}X$. Suppose that the second derivatives of $b$ at $(x_*,x_*)$ satisfy
\begin{equation}\label{hyperbolic-jet}
D^2b(x_*,x_*)<0,
\qquad
\det\partial^2_{xy}b(x_*,x_*)\ne0.
\end{equation}
Put $p_*=\partial_x b(x_*,x_*)$ and denote $z_*=(x_*,p_*)$.

Then the Hamiltonian correspondence $\Theta$ generated by $b$ is a pseudocascade near the stationary transition and admits a unique local symplectic branch $\Phi$ through the stationary transition $(z_*,z_*)$. The point $z_*$ is a hyperbolic fixed point of $\Phi$, and its stable and unstable subspaces $E^-$ and $E^+$ both have dimension $n$.

Moreover, the quadratic form
\begin{equation}\label{Q-linear}
Q(\xi,\eta)=\<\eta,\xi\>
\end{equation}
is negative definite on $E^-$ and positive definite on $E^+$.
\end{Thm}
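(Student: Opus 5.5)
The plan is to reduce everything to the linearization at $z_*$ and to exploit the quadratic identity from the proof of Lemma~\ref{lem:asymptotic-Lyapunov}. Write $A=b_{xx}$, $B=b_{xy}$, $C=b_{yy}$, all evaluated at $(x_*,x_*)$, so $b_{yx}=B^{*}$.

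\emph{Local branch.} First, the stationary transition $(x_*,x_*)$ lies in the region where $\det B\ne0$. By continuity, $\det\partial^2_{xy}b\ne0$ on a neighbourhood of it. Hence $\mathscr L^\pm$ are local diffeomorphisms there, and $\Theta$ is a pseudocascade near $(z_*,z_*)$. The branch $\Phi=\mathscr L^+\circ(\mathscr L^-)^{-1}$, where $(\mathscr L^-)^{-1}$ is the local inverse sending $z_*$ to $(x_*,x_*)$, is unique, fixes $z_*$, and is symplectic by Theorem~\ref{thm:symplectic-cascade}.

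\emph{Linearization.} Differentiating the Legendre relations gives $\eta_0=A\xi_0+B\xi_1$ and $\eta_1=-B^{*}\xi_0-C\xi_1$. Solving the first for $\xi_1=B^{-1}(\eta_0-A\xi_0)$ gives an explicit matrix $D\Phi(z_*)$. By the computation in Lemma~\ref{lem:asymptotic-Lyapunov},
\[
Q(\xi_1,\eta_1)-Q(\xi_0,\eta_0)=-(\xi_0,\xi_1)\,D^2b\,(\xi_0,\xi_1)^{*}\ge c\bigl(|\xi_0|^2+|\xi_1|^2\bigr),
\]
with $c>0$ by $D^2b<0$. The right-hand side vanishes only when $\xi_0=\xi_1=0$, which forces $\eta_0=0$. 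So $Q$ strictly increases along every nonzero orbit of the linear map $P=D\Phi(z_*)$.

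\emph{Hyperbolicity.} Suppose $P$ had an eigenvalue $|\mu|=1$. Take the real invariant subspace (of dimension 1 or 2) it determines. On this subspace $P$ acts by a rotation in a suitable basis, possibly composed with $\pm1$. Iterating along an orbit in that subspace, the compactness of the orbit closure (for a rotation, we can return arbitrarily close to the start) contradicts strict monotonicity of $Q$: $Q(P^kw)-Q(w)\ge c|\xi_0|^2>0$, while $P^{k}w\to w$ along a subsequence.

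This is the step needing most care. A rotation-type block gives a recurrent orbit. A nontrivial Jordan block with $|\mu|=1$ contains an eigenvector, so it suffices to use the eigenvector (real or its complex pair) and apply the argument to the 2-dimensional real invariant plane on which $P$ is conjugate to a rotation.

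Since $P$ is symplectic, its spectrum is invariant under $\mu\mapsto\mu^{-1}$. With no eigenvalues on the unit circle, this gives $\dim E^-=\dim E^+=n$.

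\emph{Definiteness of $Q$.} For $0\ne w\in E^-$ we have $P^kw\to0$, so $Q(P^kw)\to0$ along a strictly increasing sequence, whence $Q(w)<0$. This is part (2) of Lemma~\ref{lem:asymptotic-Lyapunov}, applied to the stationary trajectory. Symmetrically, for $w\in E^+$ we have $P^{-k}w\to0$ and $Q(P^{-k}w)$ is strictly decreasing in $k$ towards $0$, so $Q(w)>0$.
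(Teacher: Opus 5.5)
Your proposal is correct. It differs from the paper's proof only in how eigenvalues on the unit circle are excluded.

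The paper argues algebraically over $\C$. An eigenvalue $\lambda$ with $|\lambda|=1$ gives a solution $\xi_t=\lambda^t v$ of the linearized discrete Euler--Lagrange equation $B^{\mathsf T}\xi_{t-1}+(A+C)\xi_t+B\xi_{t+1}=0$. Pairing with $v^*$ and using $\bar\lambda=\lambda^{-1}$ yields $(v,\lambda v)^*D^2b\,(v,\lambda v)=0$, which contradicts $D^2b<0$ at once, with no case distinctions.

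You stay in real coordinates instead. You combine the strict monotonicity of $Q$ under $P=D\Phi(z_*)$ with recurrence of $P$ on the real invariant line or plane attached to a unimodular eigenvalue. This is the discrete counterpart of the continuous-time argument in \secref{sec:variations}, where a purely imaginary eigenvalue produced a periodic solution contradicting growth of $V$. Recurrence replaces periodicity here because the rotation angle may be irrational.

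What your route buys is that hyperbolicity and both definiteness claims follow from one mechanism, the strict increase of $Q$ along nonzero orbits. The price is the small case analysis ($\mu=\pm1$, complex pairs, reduction from Jordan blocks to eigenvectors), which the paper's one-line Hermitian computation avoids.

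There is one slip to fix. The bound $Q(P^kw)-Q(w)\ge c|\xi_0|^2$ can be zero when $\xi_0=0$. What you actually need, and what your identity gives, is
\[
Q(P^kw)-Q(w)\ge Q(Pw)-Q(w)\ge c\bigl(|\xi_0|^2+|\xi_1|^2\bigr)>0
\]
for every $k\ge1$ and every $w\ne0$.

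Also, you use implicitly that $\Phi(z_*)=z_*$. This rests on the first-order condition $\partial_xb(x_*,x_*)+\partial_yb(x_*,x_*)=0$, which comes from the interior maximum of $x\mapsto b(x,x)$.
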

\begin{proof}
Put $T=D\Phi(z_*)$ and
$A=\partial_{xx}^2b(x_*,x_*)$,
$B=\partial_{xy}^2b(x_*,x_*)$,
$C=\partial_{yy}^2b(x_*,x_*)$.
The linearized Euler--Lagrange equation is
\begin{equation}\label{linear-DEL}
B^{\mathsf T}\xi_{t-1}
+(A+C)\xi_t+B\xi_{t+1}=0.
\end{equation}
If $\xi_t=\lambda^t v$ with $|\lambda|=1$, then
\begin{equation}
\bigl(\lambda B+A+C+
\lambda^{-1}B^{\mathsf T}\bigr)v=0.
\end{equation}
Multiplying by $v^*$ gives
\begin{equation}
\begin{pmatrix}v\\ \lambda v\end{pmatrix}^{\!*}
\begin{pmatrix}A&B\\ B^{\mathsf T}&C\end{pmatrix}
\begin{pmatrix}v\\ \lambda v\end{pmatrix}=0,
\end{equation}
contradicting negative definiteness of $D^2b(x_*,x_*)$.
Thus $T$ has no eigenvalues on the unit circle.
Since $T$ is symplectic, its stable and unstable
subspaces both have dimension $n$.

For $T(\xi,\eta)=(\xi',\eta')$, the linearized
Legendre relations read
\begin{equation}
\eta=A\xi+B\xi',
\qquad
\eta'=-B^{\mathsf T}\xi-C\xi'.
\end{equation}
Since $B$ is invertible, the first relation gives
$\xi'=B^{-1}(\eta-A\xi)$, so $(\xi,\xi')$
provide equivalent coordinates for $(\xi,\eta)$.
Consequently,
\begin{equation}
Q(\xi',\eta')-Q(\xi,\eta)
=
-\begin{pmatrix}\xi\\ \xi'\end{pmatrix}^{\!*}
D^2b(x_*,x_*)
\begin{pmatrix}\xi\\ \xi'\end{pmatrix}>0
\end{equation}
for every nonzero $(\xi,\eta)$.

The negative definiteness of $Q$ on $E^-$ follows
from Lemma~\ref{lem:asymptotic-Lyapunov}, applied
to the stable tangent trajectories at $z_*$. For every
nonzero $z\in E^+$, the sequence $Q(T^{-k}z)$ is
strictly decreasing and converges to zero. Hence
$Q(z)>0$ on $E^+$.
\end{proof}

\begin{Small}
\begin{Rem}
The bi-homogeneous case admits a parallel treatment,
mutatis mutandis, within the framework of nonsmooth
Hamiltonian correspondences. The primal--dual relations
for a technological cone replace the discrete Legendre
transformations, while the von Neumann rays established
by Makarov and Rubinov \cite{mak-rub} play the role of
exponential trajectories. The corresponding multiplicative
Lyapunov structure was described in
\secref{sec:m-lyapunov}.
\end{Rem}
\par\end{Small}

\subsection{Invariant local Lagrangian manifolds}
\label{sec:invariant-manifolds}

In continuous time variational optimization, the Hamilton--Jacobi equation (or, in its optimal-control formulation, the Hamilton--Jacobi--Bellman equation), together with the associated Lax--Oleinik semigroup \cites{Fathi1998,Fathi2008}, provides the functional description of the dynamics. Its geometric counterpart is the Hamiltonian flow: smooth solutions of the stationary Hamilton--Jacobi equation $H(x,\d S)=\lambda$ generate locally invariant Lagrangian graphs, and under suitable concavity assumptions the distinguished stationary solution generates the stable manifold of a hyperbolic equilibrium.
The continuous-time discussion in \secref{sec:variations} constructs the local stationary Hamilton--Jacobi graph; it does not identify this graph with a global Bellman value function.

In discrete time, much of the familiar Hamiltonian apparatus disappears, most notably the Hamiltonian function as an infinitesimal generator. What survives is the Bellman spectral equation on the functional side and the Hamiltonian pseudocascade on the symplectic side. We show below that their duality can be reconstructed at the level of germs near the stationary point, without imposing global conditions on the discrete dynamics. Extending this local picture beyond a neighborhood of the stationary point is a separate problem, inevitably involving state constraints and the behavior of the Lagrangian near $\partial X$. Such de-localization may require additional assumptions whose compatibility with the usual concavity and regularity hypotheses of economic models deserves further investigation.

\begin{Thm}\label{thm:local-stable-Lagrangian}
Under the assumptions of
Theorem~\ref{thm:discrete-hyperbolicity},
the Hamiltonian correspondence $\Theta$ admits
a unique germ $W^-$ of a smooth stable invariant
Lagrangian manifold through $z_*=(x_*,p_*)$,
associated with the local branch $\Phi$ fixing $z_*$.

The germ is horizontal and admits a representation
\begin{equation}\label{local-stable-graph}
W^-=\operatorname{graph}\d S,
\end{equation}
where $S$ is a smooth strictly concave function
defined in a neighborhood of the point $x_*$.
\end{Thm}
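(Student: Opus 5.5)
The plan is to apply the Stable Manifold Theorem for hyperbolic fixed points of smooth diffeomorphisms to the local symplectic branch $\Phi$ from Theorem~\ref{thm:discrete-hyperbolicity}. Since $z_*$ is a hyperbolic fixed point of $\Phi$ with $\dim E^-=n$, there is a unique germ of a smooth $n$-dimensional local stable manifold $W^-$ through $z_*$, tangent to $E^-$ and invariant under $\Phi$. It consists of the points whose forward $\Phi$-orbits stay in a small neighborhood and converge to $z_*$. Uniqueness of the germ, for the branch $\Phi$ fixing $z_*$, is part of that theorem. Smoothness of $W^-$ follows from smoothness of $\Phi$, which in turn comes from the implicit function theorem and the condition $\det\partial^2_{xy}b\ne0$.

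The Lagrangian property is proved exactly as in the continuous case. For $z\in W^-$ and $u,v\in T_zW^-$, Theorem~\ref{thm:symplectic-cascade} gives $\Phi^*\omega=\omega$, so
\[
\omega_z(u,v)=\omega_{\Phi^k(z)}\bigl(D\Phi^k(z)u,D\Phi^k(z)v\bigr).
\]
The tangent vectors contract to zero as $k\to\infty$, uniformly on a small neighborhood, so the right-hand side tends to zero. Hence $\omega|_{W^-}=0$, and since $\dim W^-=n$, the germ is Lagrangian.

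For horizontality I would use the second part of Theorem~\ref{thm:discrete-hyperbolicity}: the form $Q(\xi,\eta)=\<\eta,\xi\>$ is negative definite on $E^-=T_{z_*}W^-$. A vertical vector $(0,\eta)$ would give $Q=0$, so $E^-$ meets the vertical fiber $\{\xi=0\}$ only at $0$. Thus $\pi_x\colon W^-\to X$ has bijective differential at $z_*$, and by the inverse function theorem the germ is a graph $p=P(x)$ over a neighborhood $U$ of $x_*$. A Lagrangian graph is the graph of a closed $1$-form, so on a simply connected $U$ we get $P=\d S$ with $S$ smooth.

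The concavity of $S$ is the step I expect to need the most care. At $x_*$, writing $E^-=\{(\xi,\d^2S(x_*)\xi)\}$ gives $Q=\<\d^2S(x_*)\xi,\xi\><0$, so the Hessian is negative definite and $S$ is strictly concave near $x_*$ after shrinking $U$.

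To avoid depending only on continuity at the point, I would also prove the estimate directly at every point of the germ. The tangent space $T_zW^-$ is invariant under the derivative cocycle along the orbit of $z$, and tangent trajectories starting there converge to zero. Lemma~\ref{lem:asymptotic-Lyapunov}(2) then gives $\<\eta_0,\xi_0\><0$ for every nonzero tangent vector $(\xi_0,\d^2S(x)\xi_0)$, provided $D^2b<0$ near $(x_*,x_*)$, which holds by continuity. Hence $\d^2S(x)<0$ throughout $U$.

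The remaining technical point is to make sure that the orbits starting on $W^-$ stay within the region where the local branch $\Phi$ and the hypothesis $D^2b<0$ are valid. Shrinking the neighborhood and using the local stable manifold characterization handles this.
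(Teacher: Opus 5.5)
Your proposal is correct and follows essentially the same route as the paper. Both arguments use the Stable Manifold Theorem for the branch $\Phi$, then show that $\omega$ vanishes on $W^-$ by symplectic invariance plus contraction, derive horizontality and negative definiteness of $D^2S(x_*)$ from the negative definiteness of $Q$ on $E^-$, and obtain strict concavity near $x_*$ by continuity. Your extra pointwise argument via Lemma~\ref{lem:asymptotic-Lyapunov}(2) is valid but not needed, since the paper simply shrinks the neighborhood.
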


\begin{proof}
By Theorem~\ref{thm:discrete-hyperbolicity},
$z_*$ is a hyperbolic fixed point of the local
symplectomorphism $\Phi$, with an $n$-dimensional
stable subspace $E^-$. The classical Stable Manifold
Theorem \cite{inv-manif} yields the unique germ
of its smooth stable manifold $W^-$.

Since $\Phi$ is symplectic and tangent vectors
to $W^-$ contract under forward iteration,
the symplectic form vanishes on $W^-$. As
$\dim W^-=n$, the manifold is Lagrangian.

By Theorem~\ref{thm:discrete-hyperbolicity},
the quadratic form $Q(\xi,\eta)=\<\eta,\xi\>$
is negative definite on $E^-$. Hence $E^-$
contains no nonzero vertical vectors and,
being Lagrangian, is the graph of a symmetric
negative definite linear operator.

Consequently, $W^-$ is horizontal near $z_*$
and admits the representation
\eqref{local-stable-graph}. The Hessian
$D^2S(x_*)$ is negative definite, so $S$
is strictly concave in a sufficiently small
neighborhood of $x_*$.
\end{proof}

\begin{Small}
\begin{Rem}
The inverse local branch $\Phi^{-1}$ similarly
determines a germ of an unstable invariant
Lagrangian manifold $W^+$ through $z_*$.
Its tangent space has dimension $n$.
\end{Rem}
\par\end{Small}

\begin{Thm}[Local Bellman--Hamiltonian duality]
\label{thm:local-Bellman}
Let $b:X\times X\to\R$ be a smooth jointly concave discrete Lagrangian, and let $U\subseteq\operatorname{int}X$ be a convex open set. Suppose that $S:U\to\R$ is smooth and concave, and that $\f:U\to U$ is a smooth map.

Then the following three conditions concerning $U$, $S$ and $\f$ are equivalent:

\begin{enumerate}
\item \emph{Bellman equation.}
For some constant $\lambda$, the function $S$ satisfies
\begin{equation}\label{local-Bellman}
S(x)+\lambda
=\max_{y\in U}\{b(x,y)+S(y)\},
\qquad x\in U,
\end{equation}
and $\f(x)$ is the maximizing point.

\item \emph{Legendre relations.}
For every $x\in U$,
\begin{equation}\label{local-Bellman-Legendre}
\partial_xb(x,\f(x))=\d S(x),
\qquad
\partial_yb(x,\f(x))=-\d S(\f(x)).
\end{equation}

\item \emph{Hamilton--Jacobi representation.}
The graph $\operatorname{graph}\d S$ is invariant under the transition $\f:U\to U$ associated with the Hamiltonian correspondence $\Theta$, and
\begin{equation}\label{local-Bellman-HJ}
b(x,\f(x))+S(\f(x))-S(x)=\lambda
\end{equation}
for some constant $\lambda$.
\end{enumerate}

Moreover, assume that $x\mapsto b(x,x)$ attains its maximum at $x_*\in\operatorname{int}X$, that $D^2b(x_*,x_*)$ is negative definite, and that $\partial^2_{xy}b(x_*,x_*)$ is nondegenerate. Then the equivalent conditions above are satisfied, as germs near $x_*$, by a smooth strictly concave function $S$, unique up to an additive constant among concave solutions satisfying
\begin{equation}
\d S(x_*)=\partial_xb(x_*,x_*),
\qquad
\f(x_*)=x_*.
\end{equation}
Its differential generates the stable invariant Lagrangian germ $W^-$ of $\Theta$ at $z_*=(x_*,\partial_xb(x_*,x_*))$, and the corresponding spectral constant is $\lambda=b(x_*,x_*)$.
\end{Thm}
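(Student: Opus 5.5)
The plan is to prove the equivalences in the cyclic order (1)$\Rightarrow$(2)$\Rightarrow$(3)$\Rightarrow$(1), and then to obtain the existence part from Theorem~\ref{thm:local-stable-Lagrangian}.

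\emph{(1)$\Rightarrow$(2).} Since $\f(x)\in U$ is an interior maximizer of $y\mapsto b(x,y)+S(y)$ on the open set $U$, the Fermat condition gives $\partial_yb(x,\f(x))+\d S(\f(x))=0$. For the other relation we use the envelope argument. The function $g(x')=b(x',\f(x))+S(\f(x))-S(x')-\lambda$ is $\le0$ on $U$ by \eqref{local-Bellman} and vanishes at $x'=x$. Hence $x$ is an interior maximum of $g$, and $\partial_xb(x,\f(x))=\d S(x)$.

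\emph{(2)$\Rightarrow$(3).} The relations \eqref{local-Bellman-Legendre} say precisely that $\mathscr L^-(x,\f(x))=(x,\d S(x))$ and $\mathscr L^+(x,\f(x))=(\f(x),\d S(\f(x)))$. So the transition $x\mapsto\f(x)$ carries the point $(x,\d S(x))$ of $\operatorname{graph}\d S$ to another point of the same graph, which is the claimed invariance. For the constant, differentiate $F(x)=b(x,\f(x))+S(\f(x))-S(x)$ by the chain rule:
\[
\d F=\bigl(\partial_xb-\d S(x)\bigr)+\bigl(\partial_yb+\d S(\f(x))\bigr)\circ D\f .
\]
Both brackets vanish by (2), and $U$ is convex, hence connected, so $F\equiv\lambda$. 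This is Proposition~\ref{prop:discrete-HJ}.

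\emph{(3)$\Rightarrow$(1).} Invariance of $\operatorname{graph}\d S$ under the transition returns the Legendre relations (2). Fix $x$ and set $G(y)=b(x,y)+S(y)$. Joint concavity of $b$ and concavity of $S$ make $G$ concave on the convex set $U$. By (2), $\d G(\f(x))=0$, so $\f(x)$ is a global maximizer of $G$ on $U$. Its value is $S(x)+\lambda$ by \eqref{local-Bellman-HJ}, which gives \eqref{local-Bellman}. This step is where concavity of $S$ and convexity of $U$ are used: they turn a stationary point into a maximum.

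\emph{Existence.} Theorem~\ref{thm:local-stable-Lagrangian} gives the germ $W^-=\operatorname{graph}\d S$, with $S$ strictly concave near $x_*$ and $\d S(x_*)=p_*$. Define $\f(x)$ as the $x$-component of $\Phi(x,\d S(x))$. This is smooth, fixes $x_*$, and by invariance of $W^-$ satisfies (2), as long as $\f$ maps a neighborhood into itself.

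This self-mapping is the main obstacle. The plan is to use that $D\f(x_*)$ is the restriction of $D\Phi(z_*)$ to $E^-$, written in the $x$-coordinate, so all its eigenvalues lie inside the unit disk. Then there is an adapted norm in which $\f$ is a contraction near $x_*$. Take $U$ to be a small ball in that norm: it is convex, $\f(U)\subseteq U$, and $S$ is concave on it. Condition (2) then holds, so (3) and (1) follow. The constant is obtained by evaluating $F$ at $x_*$, giving $\lambda=b(x_*,x_*)$.

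\emph{Uniqueness.} Let $\tilde S$ be another concave solution with $\d\tilde S(x_*)=p_*$ and $\tilde\f(x_*)=x_*$. By (2), $\operatorname{graph}\d\tilde S$ is a $\Phi$-invariant germ through $z_*$, and $\tilde\f$ fixes $x_*$. We still need to show that this germ is the stable one. Linearizing the relations (2) at $x_*$ shows that the tangent space $\operatorname{graph}D^2\tilde S(x_*)$ is a $D\Phi(z_*)$-invariant Lagrangian subspace. On it, $Q(\xi,\eta)=\<D^2\tilde S\,\xi,\xi\>\le0$. Any invariant subspace splits as a sum of its intersections with $E^-$ and $E^+$. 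Since $Q$ is positive definite on $E^+$ by Theorem~\ref{thm:discrete-hyperbolicity}, the intersection with $E^+$ is trivial, and so the subspace equals $E^-$.

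Hence $\tilde\f$ is locally contracting, and the points of $\operatorname{graph}\d\tilde S$ near $z_*$ converge to $z_*$ under iteration. They therefore lie in $W^-$ by the characterization in the Stable Manifold Theorem. It follows that $\d\tilde S=\d S$, so $\tilde S$ and $S$ differ by a constant.
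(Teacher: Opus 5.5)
Your proposal is correct and follows essentially the same route as the paper. The steps match: the Fermat/envelope derivation of the Legendre relations, constancy of $b(x,\f(x))+S(\f(x))-S(x)$ on the connected set $U$, concavity turning the stationary point into a maximum, existence from the stable Lagrangian germ on a small forward-invariant neighborhood, and uniqueness via positivity of $Q$ on $E^{+}$ forcing the tangent space to equal $E^-$, followed by the stable-manifold characterization. The only variations are minor: your envelope step (maximizing $x'\mapsto b(x',\f(x))-S(x')$) avoids differentiating $\f$, and your adapted-norm ball makes explicit the convexity of the forward-invariant neighborhood, which the paper leaves implicit.
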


\begin{proof}
Assume first that the Bellman equation holds. Since $\f(x)$ is an interior maximizer, the first-order optimality condition gives
\begin{equation}
\partial_yb(x,\f(x))+\d S(\f(x))=0.
\end{equation}
Differentiating \eqref{local-Bellman} with respect to $x$ and using this condition yields
\begin{equation}
\partial_xb(x,\f(x))=\d S(x).
\end{equation}
Thus (1) implies (2).

Conversely, the Legendre relations imply that $\f(x)$ is a stationary point of the function $y\mapsto b(x,y)+S(y)$. Concavity makes it a maximizer. Moreover, differentiating
\begin{equation}
b(x,\f(x))+S(\f(x))-S(x)
\end{equation}
and using both Legendre relations shows that this expression is constant on the connected set $U$. Hence (2) implies (1).

The equivalence of (2) and (3) follows directly from the definition of the Hamiltonian correspondence and the same constancy argument.

Finally, Theorem~\ref{thm:local-stable-Lagrangian} supplies a unique stable invariant Lagrangian germ $W^-=\operatorname{graph}\d S$ through $z_*$, with $S$ smooth and strictly concave. Its local state transition fixes $x_*$, and a sufficiently small forward invariant neighborhood provides the required map $\f:U\to U$. The equivalence just proved yields the localized Bellman equation, with $\lambda=b(x_*,x_*)$.

To establish uniqueness among concave solution germs, observe that their invariant Lagrangian graphs have tangent spaces at $z_*$ on which $Q(\xi,\eta)=\<\eta,\xi\>$ is nonpositive, by concavity of $S$. The tangent space $L$ is also invariant under $A=D\Phi(z_*)$. Since $A$ is hyperbolic, any $A$-invariant subspace decomposes into its stable and unstable spectral parts. A nonzero unstable part of $L$ would contain a nonzero vector of $E^+$, where $Q$ is positive definite by Theorem~\ref{thm:discrete-hyperbolicity}, contradicting $Q|_L\le0$. Hence $L\subseteq E^-$; equality follows from $\dim L=\dim E^-=n$. After shrinking the invariant graph, its induced forward map is contracting in an adapted local norm, since its differential at $x_*$ has only stable eigenvalues. Thus every point of the graph sufficiently close to $z_*$ converges to $z_*$, and uniqueness of the local stable manifold identifies the graph with $W^-$. The generating functions differ only by an additive constant.
\end{proof}

\begin{Small}
\begin{Rem}
The theorem is entirely local: all statements concern germs near $x_*$ and $z_*$. The neighborhoods involved may be reduced as necessary. No global continuation of the pseudocascade or regularity assumptions near $\partial X$ are required.
\end{Rem}
\par\end{Small}

\begin{Small}
\subsection{De-localization and state constraints}

The local Bellman--Hamiltonian duality established above admits a natural continuation problem. In ordinary dynamical systems, local invariant manifolds are extended by iterating the underlying map. For a Hamiltonian pseudocascade, no globally defined evolution map is available in general. Nevertheless, the symplectic correspondence $\Theta$ allows the stable germ to be continued backward along admissible finite trajectories, without requiring a globally defined cascade.

This de-localization problem should be distinguished from the stronger requirement that the continued manifold project onto the entire interior of $X$. The distinction is already familiar from continuous time variational optimization, where state constraints may obstruct the classical Hamiltonian description at the boundary. In discrete time, finite-step transitions introduce additional difficulties of solvability and branch selection. We do not attempt a general theory of these effects here, nor impose assumptions designed to eliminate them throughout $X$.

The distinction is also relevant to the embedding problem discussed in the final section. The symplectic obstruction exhibited there is local: they arise from the linearization at a hyperbolic stationary point and do not depend on any global continuation of the invariant manifold.

We denote by $\Wloc$ a sufficiently small representative of the stable Lagrangian germ $W^-$ constructed in Theorem~\ref{thm:local-stable-Lagrangian}, chosen so that its forward trajectories under the local branch $\Phi$ remain in $\Wloc$ and converge to $z_*$. Using the correspondence $\Theta$, we define
\begin{equation}\label{Wglo}
\Wglo=\bigcup_{k\ge 0}\Theta^{-k}(\Wloc).
\end{equation}
Here $\Theta^{-k}(\Wloc)$ consists of all phase points from which a finite regular trajectory of the Hamiltonian correspondence $\Theta$ reaches $\Wloc$ after $k$ steps, with every underlying transition $(x_t,x_{t+1})$ in $\operatorname{int}X\times\operatorname{int}X$. Thus $\Wglo$ is well defined without assuming that $\Theta$ is a single-valued map or that its iterations exist everywhere. Only regular interior continuations are included; trajectories meeting $\partial X$ are not part of this construction.

The definition of $\Wglo$ is purely set-theoretic. To understand whether it produces a smooth continuation of $\Wloc$, we must distinguish several possible obstructions.

\begin{enumerate}
\item \emph{Missing transitions.}
For a given phase point $z$, there may be no $z'$ such that $(z,z')\in\Theta$, or no $z'$ such that $(z',z)\in\Theta$. Thus a prescribed finite chain may have no continuation in the required direction. In particular, the sets $\Theta^{-k}(\Wloc)$ need not cover any prescribed phase domain.

\item \emph{Multiple continuations.}
Although each local branch of $\Theta$ is a diffeomorphism, several distinct branches may be available at the same phase point. Consequently, a point of $\Wloc$ may have several predecessors, and different finite chains may lead to the same point. Each chain defines a smooth local continuation, but these continuations need not agree.

\item \emph{Loss of horizontality.}
A local inverse symplectomorphism preserves the Lagrangian property but need not preserve transversality to the vertical fibers of $\pi\:X\times\R^{n*}\to X$. Thus a continued Lagrangian manifold may develop vertical tangent vectors and cease to be locally representable as $\operatorname{graph}\d S$.

\item \emph{Multiple momenta over one state.}
Even when every continued piece is horizontal, two distinct phase points $(x,p_1)$ and $(x,p_2)$ may belong to $\Wglo$. Then the horizontal projection is not injective, so the continued set cannot be represented by a single-valued generating function on its entire projection.
\end{enumerate}

The local Legendre condition rules out singularities of individual regular branches, but none of the other obstructions. The Lyapunov monotonicity associated with joint concavity of $b$ provides the additional control needed to address the last two phenomena and to reconcile different continuations.

\begin{Thm}[De-localization of the stable Lagrangian germ]
\label{thm:delocalization}
Let $X\subseteq\R^n$ be a compact convex set with nonempty interior, and let $b:X\times X\to\R$ be a smooth strictly jointly concave discrete Lagrangian satisfying the local Legendre condition. Assume that $x_*\in\operatorname{int}X$ is the unique maximizer of $x\mapsto b(x,x)$ and that
\begin{equation}
D^2b(x_*,x_*)<0.
\end{equation}
Let $\Theta$ be the associated Hamiltonian pseudocascade and $\Wloc$ a sufficiently small representative of its stable Lagrangian germ $W^-$ at $z_*=(x_*,\partial_xb(x_*,x_*))$, as constructed in Theorem~\ref{thm:local-stable-Lagrangian}. Define $\Wglo$ as above in \eqref{Wglo}.

Then the portion of $\Wglo$ over $\operatorname{int}X$ is a smooth horizontal Lagrangian graph over an open set $U\subseteq\operatorname{int}X$ containing $x_*$. In particular, its projection onto $U$ is a diffeomorphism.
\end{Thm}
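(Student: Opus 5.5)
The strategy is to use the Lyapunov monotonicity of Theorem~\ref{thm:discrete-lyapunov} to control every point of $\Wglo$ and every tangent vector to it, and then read off injectivity and horizontality from the sign of $V$.

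\emph{Step 1: structure of each piece.} Each point $z\in\Wglo$ lies on a finite regular chain $z=z_0,z_1,\dots,z_k\in\Wloc$ of $\Theta$. Along this chain, compatible local branches $\Phi$ are symplectic diffeomorphisms by Theorem~\ref{thm:symplectic-cascade}. Hence $\Phi^{-k}$ along the chain carries a neighbourhood of $z_k$ in $\Wloc$ onto a smooth Lagrangian $n$-dimensional piece through $z$. Continuing the chain by the local branch inside $\Wloc$ gives an infinite forward trajectory converging to $z_*$. So every point of $\Wglo$ is the initial point of a forward $\Theta$-trajectory converging to $z_*$. Likewise every tangent vector to such a piece generates a tangent trajectory that converges to $0$, since $\Wloc$ contracts to $z_*$.

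\emph{Step 2: horizontality.} Take a nonzero tangent vector $(\xi_0,\eta_0)$ to a piece at $z$. Lemma~\ref{lem:asymptotic-Lyapunov}(2) gives $\<\eta_0,\xi_0\>\le0$, with strict inequality because $D^2b<0$ near $(x_*,x_*)$. The lemma itself uses only negative definiteness near $(x_*,x_*)$ plus the nonvanishing of tangent trajectories, which holds because of the local Legendre condition. A vertical vector $(0,\eta_0)$ would give $\<\eta_0,0\>=0$, a contradiction. Hence each piece is transversal to the fibres, and its projection to $X$ is a local diffeomorphism. Since each piece is Lagrangian and horizontal, it is locally $\operatorname{graph}\d S$.

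\emph{Step 3: a single momentum over each state.} This step rules out obstructions (2) and (4) in the list preceding the theorem. Suppose $(x,p_1)$ and $(x,p_2)$ in $\Wglo$ have different momenta. Lemma~\ref{lem:asymptotic-Lyapunov}(1), applied to the two forward trajectories converging to $z_*$, gives $V=\<p_1-p_2,x-x\>=0$. Strict concavity forces the inequality to be strict unless the two trajectories have identical transitions for all $t$. Identical first transitions $(x_0,x_1)$ give identical momenta $p=\partial_xb(x_0,x_1)$, contradicting $p_1\ne p_2$. Hence $p_1=p_2$.

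The same argument shows that two different chains through the same state define the same point of $\Wglo$. Their pieces, being Lagrangian graphs through a common point with all nearby states carrying unique momenta, coincide locally. Thus $\Wglo\cap\pi^{-1}(\operatorname{int}X)$ is the graph of a single-valued map $x\mapsto p(x)$ on $U=\pi(\Wglo)\cap\operatorname{int}X$. This map is locally given by smooth horizontal Lagrangian pieces, so $U$ is open (projections are local diffeomorphisms), $U\ni x_*$, and the projection is a bijective local diffeomorphism, i.e.\ a diffeomorphism.

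\emph{Main obstacle.} The delicate point is Step 3, specifically checking that strict monotonicity really applies to the entire infinite trajectory. Lemma~\ref{lem:asymptotic-Lyapunov}(1) requires only that the trajectories be distinct at some step. I would spell this out as follows: $V(z^1_0,z^2_0)=0$, $V$ is nondecreasing, and $V\to0$, so $V\equiv0$. Then every step is an equality, which under strict joint concavity forces equal transitions at every step, and in particular equal momenta at $t=0$.

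A secondary subtlety is openness and smoothness at points where pieces from different chains overlap. This is handled by the uniqueness of momentum: overlapping pieces are graphs of the same function $p(x)$. Finally, closedness of the $1$-form $p(x)\,\d x$ on $U$ gives exactness only locally, and the statement asks for nothing more.
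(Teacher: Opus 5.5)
Your proposal is correct and follows the paper's proof essentially step for step. In both, the pieces of $\Wglo$ are Lagrangian because they are images of $\Wloc$ under local symplectic branches. Lemma~\ref{lem:asymptotic-Lyapunov}(2) then excludes vertical tangent vectors, Lemma~\ref{lem:asymptotic-Lyapunov}(1) excludes two momenta over one state, and the resulting horizontal pieces are patched into a single graph.

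Your added details only make explicit what the paper leaves implicit. These are: that $V\equiv0$ along the pair forces identical transitions and hence equal initial momenta, and that injectivity of the projection makes overlapping pieces coincide, which gives openness of $U$ and smoothness of $p(x)$.
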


\begin{proof}
Each finite regular backward continuation of $\Wloc$ is locally a smooth Lagrangian manifold, since the local branches of $\Theta$ are symplectomorphisms.

Every point of $\Wglo$ admits a forward trajectory
which eventually enters $\Wloc$ and converges to $z_*$. By
Lemma~\ref{lem:asymptotic-Lyapunov}, two distinct points
of $\Wglo$ cannot have the same state coordinates; moreover, no regular backward continuation can have a nonzero vertical tangent vector. Indeed, such a vector is transported forward by the differentials of the finitely many regular symplectic branches into the tangent space of $\Wloc$, where it converges to zero under the contracting local branch. The second part of the lemma then gives strict negativity of its initial $Q$, contradicting verticality.
Their horizontal graph representations therefore agree
on overlapping domains, and together form a smooth
Lagrangian graph over the interior of their projection.
\end{proof}

The inclusion $U\subseteq\operatorname{int}X$ may be strict, but equality is possible, as the following examples illustrate.

\begin{Ex}
For an example with full interior coverage, take $X=[-1,1]$ and
\begin{equation}
b(x,y)=-\frac a2x^2-\frac c2(y-x)^2,
\qquad a,c>0.
\end{equation}
The stationary point is $x_*=0$. If $r\in(0,1)$ is the smaller root of
$cr^2-(a+2c)r+c=0$, then the trajectories on the stable graph satisfy $x_{t+1}=rx_t$ and converge to $x_*=0$. Thus $U=(-1,1)$.
\end{Ex}

The projection of the extended stable manifold need not cover the entire interior of $X$, even for a strictly concave quadratic Lagrangian.

\begin{Ex}
Consider
\begin{equation}
X=[-1,10],\qquad b(x,y)=-(x^2+xy+y^2).
\end{equation}
The full Hessian of $b$ is negative definite, and the
mixed Legendre derivative is nonzero. The unique
stationary point is $x_*=0$, while the discrete
Euler--Lagrange equation reads $x_{t-1}+4x_t+x_{t+1}=0$.

Its solutions converging to $x_*$ have the form $x_t=r^t x_0$, where
$r=-2+\sqrt{3}\in(-1,0)$. Such a stable solution remains in $X$ for every $t\ge0$ precisely when $x_0\in[-1,\,2+\sqrt{3}]$. The endpoint $x_0=2+\sqrt3$ has $x_1=-1\in\partial X$ and is therefore excluded from the regular interior continuation.

Indeed, for $x_0>2+\sqrt{3}$ the first transition
requires $x_1<-1$, violating the state constraint.
Consequently, the interior projection of the extended
stable manifold is
$
U=(-1,\,2+\sqrt{3})
\subsetneq\operatorname{int}X
$. Thus even a globally regular quadratic Lagrangian
need not admit total de-localization of its stable
Hamiltonian graph.
\end{Ex}

\end{Small}

\section{The embedding problem}\label{sec:embedding}

The preceding sections revealed substantial differences between the continuous and discrete time Hamiltonian descriptions of stationary optimization, even under strong joint concavity assumptions. These differences become particularly apparent in the presence of state constraints and make a general global embedding of discrete Hamiltonian dynamics into continuous flows rather implausible. Yet they leave open a more modest possibility: near the interior hyperbolic stationary point, where state constraints are inactive and the Hamiltonian pseudocascade is a genuine local symplectic diffeomorphism, could the discrete dynamics be realized as the time-one map of an autonomous Hamiltonian flow? We shall see that even this local embedding is not always possible: obstructions arise already at the linearization of the pseudocascade, independently of state constraints and global continuation problems.

\subsection{Crash course on different embedding problems}

The embedding problem is a vast subject in its own right; here we shall touch upon it only insofar as needed for our purposes.

The syntax of these embedding problems is essentially the same. We consider a cyclic (semi)group generated by an element $f$, usually a transformation of some set $\mathscr S$, and ask for a one-parameter (semi)group $f^t$, with $t\in\R_+$ or $t\in\R$ as appropriate, such that $f=f^1$. One can impose different requirements on the (semi)group, e.g.:
\begin{enumerate}
 \item Each $f^t$ should be a self-map of the same $\mathscr S$;
 \item The correspondence $t\mapsto f^t$ should be continuous in some natural topology, in particular, at $t=0$ when $f^0=\operatorname{id}$;
 \item If $\mathscr S$ is equipped by a certain structure (topological space, smooth manifold etc.) preserved by $f$, then all maps $f^t$ must also preserve this structure.
\end{enumerate}
Obstructions can arise on any level. 

\begin{Small}
\begin{Ex}
Assume that $\mathscr S$ is finite and $f\:\mathscr S\to\mathscr S$ is an \emph{odd} permutation. Then $f^{1/2}$ cannot exist in the class of permutations, because its square must be even.
\end{Ex}

\begin{Ex}
If $\mathscr S$ is a finite-dimensional linear space and $f=A$ is a linear operator. If $A$ is non-invertible, then its embedding into a continuous \emph{semi}group is impossible: because of continuity at $t=0$, the operator $A^{1/n}$ must be invertible for $n$ large enough. This means that $A=\bigl(A^{1/n}\bigr)^n$ also must be invertible. 

In a similar way, an orientation-reversing \emph{real} operator cannot be embedded in a flow: for the same reasons $A^{1/n}$ for large $n$ must be orientation-preserving by continuity, hence $A$ must also be.
\end{Ex}  

\begin{Rem}
One should distinguish embedding from \emph{suspension}. Any diffeomorphism $f:\mathscr S\to\mathscr S$ can be realized as the return map of an autonomous flow on its mapping torus, obtained by identifying $(x,1)$ with $(f(x),0)$ in $\mathscr S\times[0,1]$. This construction enlarges the phase space and does not imply that $f$ can be embedded into a one-parameter flow acting on $\mathscr S$ itself.
\end{Rem}

\par\end{Small}   

\subsection{Embedding of linear operators: the Culver test}
If $A$ is a finite-dimensional linear operator, then its embeddability is equivalent to existence of its matrix logarithm $B=\log A$, the matrix satisfying the equation $\exp B=A$. Here $\exp$ of a matrix is understood in the classical sense, as the result of the substitution of $B$ into the formal power series for the exponent. The $A^t=\exp tB$, $\in\R$, will be the embedding group.

\begin{Prop}
For an invertible $A$ the embedding is always possible over $\C$. Indeed,
it is enough to consider Separately each Jordan block
$J_\l=\lambda E+N$, where $\lambda\ne0$ and $N$ is nilpotent. Then
\begin{equation}\label{matlog}
 \log J_\l=(\log\lambda)E+\log(E+\lambda^{-1}N),
\end{equation}
where the second logarithm is a finite polynomial in $N$. The corresponding one parameter group is $A^t=\exp (t\log A)$, where $\log A$ is assembled of the corresponding Jordan blocks $\log J_\l$ for all eigenvalues of $A$. \qed
\end{Prop}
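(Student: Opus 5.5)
The plan is to reduce to Jordan blocks and then verify the two claims: that the series defining the block logarithm is finite, and that the assembled matrix really exponentiates to $A$. First, choose an invertible $P$ over $\C$ with $P^{-1}AP=J=\operatorname{diag}(J_{\l_1},\dots,J_{\l_k})$. Since $\exp(P^{-1}BP)=P^{-1}(\exp B)P$, it suffices to find $\log J$. Since the exponential of a block-diagonal matrix is computed block by block, it further suffices to find a logarithm of a single block $J_\l=\l E+N$ with $\l\ne0$ and $N^m=0$. Invertibility of $A$ is used exactly here: it guarantees that every eigenvalue $\l$ is nonzero, so that $\log\l$ is defined (choose any branch of the complex logarithm).

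For a single block, define
\begin{equation*}
M=\log(E+\l^{-1}N)=\sum_{j=1}^{m-1}\frac{(-1)^{j+1}}{j}\bigl(\l^{-1}N\bigr)^j ,
\end{equation*}
which is a finite sum because $N$ is nilpotent, and set $\log J_\l=(\log\l)E+M$. The factor $(\log\l)E$ is scalar and commutes with $M$, so
\begin{equation*}
\exp\bigl((\log\l)E+M\bigr)=\l\cdot\exp M .
\end{equation*}
It remains to show that $\exp M=E+\l^{-1}N$.

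This is the only step that needs an argument. I would deduce it from the formal power series identity $\exp(\log(1+u))=1+u$ in $\mathbb Q[[u]]$. Substituting the nilpotent matrix $u=\l^{-1}N$ is legitimate: the map $\mathbb Q[[u]]\to\C[N]$ given by $u\mapsto \l^{-1}N$ kills $u^m$, so it factors through the truncation $\mathbb Q[u]/(u^m)$ and is a ring homomorphism. Moreover, $M$ is itself nilpotent, being divisible by $N$, so $\exp M$ is also a finite sum. The composition of the two series is therefore compatible with this homomorphism, and the formal identity yields $\exp M=E+\l^{-1}N$.

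Finally, set $\log A=P\,\operatorname{diag}(\log J_{\l_i})\,P^{-1}$. Then $\exp(\log A)=A$. The family $A^t=\exp(t\log A)$, $t\in\R$, is a continuous one-parameter group with $A^0=E$, $A^1=A$ and $A^{t+s}=A^tA^s$, because $t\log A$ and $s\log A$ commute. This is the required embedding over $\C$. I expect no real obstacle; the only point needing care is justifying the substitution of nilpotent matrices into the composed series.
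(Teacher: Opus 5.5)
Your proposal is correct and follows the paper's route: reduce to Jordan blocks, take $\log J_\lambda=(\log\lambda)E+\log(E+\lambda^{-1}N)$ with a finite nilpotent series, reassemble, and set $A^t=\exp(t\log A)$. The paper simply asserts the block formula, while you also verify $\exp\log(E+\lambda^{-1}N)=E+\lambda^{-1}N$ by substituting the nilpotent matrix into the formal identity $\exp(\log(1+u))=1+u$, which correctly fills in that implicit step.
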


The real case is substantially more delicate, since logarithms of the negative eigenvalues are non-real and formula \eqref{matlog} becomes illegal over $\R$. Simple preservation of orientation is insufficient for existence of the real logarithm.
The complete criterion for the existence of a real matrix logarithm was
given by Culver \cite{Culver1966}. Two identical negative Jordan blocks can be paired into one real logarithmic block, just as the central symmetry of the 2-plane, $v\mapsto -v$, embeds as a rotation by $\pi$; Culver's condition says precisely that all negative Jordan blocks can be paired in this way.

\begin{Thm}[Culver test]\label{thm:culver}
An invertible real matrix $A$ admits
a real logarithm if and only if, for every negative real eigenvalue
$\lambda$ of $A$, the Jordan blocks $J_\lambda$ of each given size
occur an even number of times. Thus positivity of $\det A$, although
necessary, is not sufficient: the obstruction depends on the Jordan
structure associated with each individual negative eigenvalue. \qed
\end{Thm}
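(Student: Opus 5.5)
The plan is to reduce everything to the real Jordan normal form and treat the blocks separately, since $\exp$ commutes with similarity: $\exp(PBP^{-1})=P\exp(B)P^{-1}$. For sufficiency, write $A=P\,\operatorname{diag}(A_1,\dots,A_m)P^{-1}$ with real $P$, where the $A_j$ are of three kinds: real Jordan blocks $J_\lambda$ with $\lambda>0$; real $2k\times2k$ blocks for a pair of nonreal eigenvalues $\mu,\bar\mu$; and, by the hypothesis, pairs $J_\lambda\oplus J_\lambda$ with $\lambda<0$ of equal size. For $\lambda>0$, formula \eqref{matlog} with the real branch of $\log\lambda$ gives a real logarithm, because the series $\log(E+\lambda^{-1}N)$ is a finite polynomial with real coefficients.

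For a complex pair, I will identify the real block with the complexification $J_\mu\oplus J_{\bar\mu}$. I will take $\log J_\mu$ from \eqref{matlog} and $\log J_{\bar\mu}=\overline{\log J_\mu}$. The resulting logarithm is invariant under complex conjugation, and hence it is real in the real basis.

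For a pair $J_\lambda\oplus J_\lambda$ with $\lambda<0$, I will identify $\R^{2k}$ with $\C^k$. There $J_\lambda\oplus J_\lambda$ becomes the complex-linear map $\lambda E+N$ acting on $\C^k$, with $N$ real. Put $B=(\log|\lambda|+i\pi)E+\log(E+\lambda^{-1}N)$. This $B$ is complex-linear on $\C^k$, so it is a real-linear map of $\R^{2k}$. The first term is the real block $\log|\lambda|\,E\oplus\ldots$ plus $\pi$ times the rotation $J=\begin{pmatrix}0&-1\\1&0\end{pmatrix}$, and $\exp(\pi J)=-E$. Since $B$ commutes with the nilpotent correction, $\exp B=-|\lambda|(E+\lambda^{-1}N)=\lambda E+N$. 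This is exactly the pairing mechanism for the central symmetry mentioned before the statement.

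Necessity is the harder direction, and I expect it to be the main obstacle. Suppose $A=\exp B$ with $B$ real. By the spectral mapping theorem, each eigenvalue of $A$ equals $e^\beta$ for some eigenvalue $\beta$ of $B$. If $\lambda<0$, then $\beta$ has imaginary part an odd multiple of $\pi$, so $\beta\notin\R$, and $\bar\beta$ is also an eigenvalue of $B$ with $e^{\bar\beta}=\lambda$.

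The key lemma is that $\exp$ preserves the Jordan structure on each generalized eigenspace. On the generalized eigenspace $V_\beta$ of $B$, write $B=\beta E+M$ with $M$ nilpotent. Then $\exp B=e^\beta(E+M+M^2/2+\cdots)$, and $E+\cdots$ differs from $E+M$ only by the factor $M\cdot(\text{invertible polynomial in }M)$. Therefore $\exp B-e^\beta E$ and $M$ have the same kernels of all powers, and so the same Jordan block sizes.

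The generalized eigenspace of $A$ for $\lambda$ is the direct sum of the $V_\beta$ over all $\beta$ with $e^\beta=\lambda$, taken over $\C$. The non-real $\beta$ come in conjugate pairs, and complex conjugation carries $V_\beta$ to $V_{\bar\beta}$ with identical Jordan structure. Summing over these pairs, each block size for $\lambda$ occurs an even number of times. The point needing care is that block counts over $\C$ and over $\R$ agree for real eigenvalues. I will handle this by noting that the counts are determined by $\dim\ker(A-\lambda E)^j$, which is the same over either field.

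Finally, the remark about $\det A$ follows at once, since $\det\exp B=e^{\operatorname{tr}B}>0$. The example $\operatorname{diag}(-1,-2)$ has positive determinant but fails the test, which shows that positivity of $\det A$ is not sufficient.
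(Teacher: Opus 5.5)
Your proof is correct, and it goes further than the paper: the paper gives no proof of this theorem. It defers to Culver's article and only indicates the sufficiency mechanism, namely that two equal negative Jordan blocks are paired, just as $-E$ on a plane is the rotation by $\pi$.

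\emph{Sufficiency.} Your argument is exactly that mechanism made rigorous. You view $J_\lambda\oplus J_\lambda$ as the complex-linear map $\lambda E+N$ on $\C^k$ and take $(\log|\lambda|+i\pi)E+\log(E+\lambda^{-1}N)$ as its logarithm.

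\emph{Necessity.} This is the half the paper does not sketch at all, and your argument is the standard one. On each generalized eigenspace $V_\beta$ of a real logarithm $B$, the operators $\exp B-e^{\beta}E$ and $B-\beta E$ have the same Jordan block sizes. For $\lambda<0$, every $\beta$ with $e^\beta=\lambda$ is nonreal and is paired with $\bar\beta$ by complex conjugation. Your rank argument, identifying real and complex block counts for real $\lambda$, closes this direction properly.

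\emph{A small simplification.} The realification trick you use for negative pairs also handles nonreal eigenvalues directly. The real Jordan block of $\mu,\bar\mu$ is the realification of $J_\mu$ acting on $\C^k$, so realifying $\log J_\mu$ from \eqref{matlog} already gives a real logarithm. This removes the conjugation-invariance bookkeeping in your complex-pair case. As written, that bookkeeping tacitly requires choosing the $\bar\mu$-Jordan chain to be the complex conjugate of the $\mu$-chain.
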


Thus a diagonal $2\times2$ matrix with eigenvalues $-2$ and $-1/2$ admits no real logarithm, as it fails the Culver test, unlike the scalar $2\times2$-matrix $-E$.

\begin{Small}
\subsection{The linear symplectic case}

The embedding problem for symplectic matrices is, in principle, more
restrictive than the general linear one. Recall that the negative of the canonical
symplectic form $\omega=\d p\land\d x$ on $\R^{2n}$ is represented by
$
 J=
 \begin{pmatrix}
  0&E\\
  -E&0
 \end{pmatrix}.
$
The symplectic group and its Lie algebra are
\begin{gather*}
 \Sp(2n,\R)
 =
 \{A\in\GL(2n,\R):A^{\mathsf T}JA=J\},
\\
 \mathfrak{sp}(2n,\R)
 =
 \{K\in\mathfrak{gl}(2n,\R):
 K^{\mathsf T}J+JK=0\}.
\end{gather*}
Thus a symplectic matrix $A$ can be embedded into a linear Hamiltonian
flow precisely when it admits a Hamiltonian logarithm,
$ A=\exp K$ with $K\in\mathfrak{sp}(2n,\R)$.

The existence criterion for such logarithms, originally obtained by
Williamson \cite{Williamson1939}, is slightly more restrictive than the
Culver test for arbitrary real logarithms. In the constructive
formulation of Dieci \cite{Dieci1998}*{Theorem~2.7}, the additional
restrictions concern the symplectic canonical blocks associated with
negative real eigenvalues; the eigenvalue $-1$ requires a separate
condition which has no counterpart in the ordinary Culver criterion
(see also \cite{Dieci1998}*{Remark~2.9}).

These additional restrictions disappear, however, in the hyperbolic
case (no modulus one eigenvalues) which is relevant to us. Indeed, $-1$ cannot then be an eigenvalue, while
the remaining negative symplectic blocks are of type~(a) in the
notation of \cite{Dieci1998}. Such a block contains ordinary Jordan
blocks of the same size corresponding to the reciprocal eigenvalues
$\lambda$ and $\lambda^{-1}$. Hence the pairing condition in
\cite{Dieci1998}*{Theorem~2.7(1)} is exactly the Culver condition.
Consequently, for a hyperbolic matrix $A\in\Sp(2n,\R)$,
\begin{equation*}
 A\text{ admits a real logarithm}
 \quad\Longleftrightarrow\quad
 A\text{ admits a real Hamiltonian logarithm}.
\end{equation*}
Thus in the hyperbolic symplectic case no additional linear
obstruction appears: once the Culver criterion is satisfied, $A$ is
the time-one map of a linear Hamiltonian flow.
\par\end{Small}

\subsection{Non-embeddable linear counterexample}

Consider the quadratic discrete Lagrangian
\begin{equation}\label{eq:culver-lagrangian}
 b(x,y)=-x^2-xy-y^2,\qquad x,y\in\R.
\end{equation}
It is strictly jointly concave (in the Hessian sense) and satisfies the local Legendre
condition at every point. The corresponding Hamiltonian pseudocascade is a genuine
linear cascade, since
\begin{equation*}
 p=-2x-y,\qquad q=x+2y,
 \qquad
 \binom yq=A\binom xp,\quad
 A=\begin{pmatrix}-2&-1\\-3&-2\end{pmatrix}.
\end{equation*}
The matrix $A$ is symplectic and hyperbolic, with two distinct
negative eigenvalues $\lambda_\pm=-2\pm\sqrt3$.
Thus it fails the Culver test and cannot be embedded into a
real linear flow. In fact, it cannot be the time-one map of any smooth
real autonomous local flow near the origin. Indeed, the origin is an
isolated fixed point of $A$, since $1$ is not an eigenvalue. If it
were not an equilibrium of such a flow, its local orbit would give
a nonconstant curve of fixed points of the time-one map, a
contradiction. At an equilibrium, however, the differential of the
time-one map is the exponential of the linearized vector field,
so $A$ would admit a real logarithm.

\begin{Rem}
The Lagrangian \eqref{eq:culver-lagrangian} can be restricted
to any compact symmetric interval $X=[-R,R]$, $R>0$.
The stable and unstable Lagrangian manifolds of the linear
extension are explicitly given by $p=\mp\sqrt3\,x$.
In particular, the stable graph extends over the entire $X$,
and its forward trajectories remain in $X$.
Thus the example satisfies the hypotheses of the
de-localization Theorem \ref{thm:delocalization} and even admits a global horizontal 
stable graph, yet its Hamiltonian pseudocascade is not
locally embeddable into an autonomous smooth flow.
\end{Rem}

\subsection{Embedding of endomorphisms of idempotent semimodules}

We have just seen that in the finite-dimensional \emph{linear} case,
non-existence of logarithms is a well-understood obstruction to
embedding: over $\C$ every invertible matrix admits a logarithm,
while over $\R$ the obstruction is characterized by the Culver test.
One might hope that semimodules over the idempotent semiring
$\Rbar=\R\cup\{-\infty\}$, equipped with
$\oplus=\max_\R$ and $\otimes=+_\R$
(see Definition~\ref{def:semiring}), would be more amenable to embedding
Bellman operators.

\begin{Small}
\begin{Ex}
Consider the ``diagonal'' Lagrangian
\begin{equation}\label{idem-diagonal}
b(x,y)=
\begin{cases}
\l(x), &x=y,\\
-\infty, &x\ne y,
\end{cases}
\qquad \l(x)\in\R.
\end{equation}
The corresponding Bellman operator $\B$ embeds into the
one-parameter group $\B^t$, $t\in\R$, whose kernel is
$b^t(x,y)=t\l(x)$ for $x=y$ and $-\infty$ otherwise.
\end{Ex}
\par\end{Small}

However, even for a two-point state space $|X|=2$,
there exist Bellman operators without a max-plus square root,
and hence without an embedding into a one-parameter semigroup.

\begin{Lem}\label{lem:idem-square}
If $|X|=2$ and $U$ is a max-plus matrix, then
\begin{equation}\label{idem-square}
U^{\otimes2}=
\begin{pmatrix}p&q\\r&s\end{pmatrix}
\quad\Longrightarrow\quad q+r\le p+s.
\end{equation}
\end{Lem}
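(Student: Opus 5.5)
Write $U=\begin{pmatrix}a&b\\c&d\end{pmatrix}$ with entries in $\Rbar$. The plan is to compute the max-plus square entrywise and compare $q\otimes r$ with $p\otimes s$ directly, with the usual convention $a+(-\infty)=-\infty$. The product rule $(U\otimes U)_{ij}=\bigoplus_k u_{ik}\otimes u_{kj}$ gives
\begin{equation*}
p=\max\{2a,\,b+c\},\quad q=\max\{a+b,\,b+d\},\quad
r=\max\{c+a,\,d+c\},\quad s=\max\{b+c,\,2d\}.
\end{equation*}
Factoring out the common terms, $q=b+\max\{a,d\}$ and $r=c+\max\{a,d\}$. Hence
\begin{equation*}
q+r=(b+c)+2\max\{a,d\}.
\end{equation*}
This identity holds in $\Rbar$ even when some entries equal $-\infty$, since distributivity of $\otimes$ over $\oplus$ holds in the semiring.

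It then remains to bound $p+s$ from below by the same quantity. I split into two cases according to which of $a,d$ is larger. If $a\ge d$, use $p\ge 2a$ and $s\ge b+c$ to get $p+s\ge 2a+b+c=q+r$. If $d\ge a$, use $p\ge b+c$ and $s\ge 2d$ symmetrically. In either case the inequality $q+r\le p+s$ of \eqref{idem-square} follows.

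The only point needing care, and the closest thing to an obstacle, is the bookkeeping with $-\infty$. The sum $p+s$ never involves $+\infty$, so no indeterminate forms arise. Whenever $q+r=-\infty$ the inequality is trivial, and otherwise all the terms used are finite. So the argument is essentially a one-line tropical computation: the key observation is that the off-diagonal entries of the square share the factor $\max\{a,d\}$, while the diagonal entries contain both $b+c$ and the squares $2a$, $2d$.
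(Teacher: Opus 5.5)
Your proof is correct and is essentially the paper's argument. The paper computes the same square, writes $q+r=b+c+2m$ with $m=\max(a,d)$, and concludes from $b+c\le\min(p,s)$ and $2m\le\max(p,s)$; this is just a compact form of your two-case split.
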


\begin{Small}
\begin{proof}
Write
\begin{equation*}
U=\begin{pmatrix}a&b\\c&d\end{pmatrix},
\qquad m=\max(a,d).
\end{equation*}
Then
\begin{equation*}
U^{\otimes2}=
\begin{pmatrix}
\max(2a,b+c)&b+m\\
c+m&\max(b+c,2d)
\end{pmatrix}.
\end{equation*}
Thus $q+r=b+c+2m$, while
$b+c\le\min(p,s)$ and
$2m=\max(2a,2d)\le\max(p,s)$.
Consequently $q+r\le p+s$.
\end{proof}

\begin{Cor}
The Bellman operator with the max-plus matrix
\begin{equation*}
B=\begin{pmatrix}-1&-1\\0&-1\end{pmatrix}
\end{equation*}
has no max-plus square root.
\end{Cor}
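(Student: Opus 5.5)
The plan is to apply Lemma~\ref{lem:idem-square} directly. Suppose, for contradiction, that some max-plus matrix $U$ satisfies $U^{\otimes2}=B$. Match the entries of $B$ with the notation of \eqref{idem-square}: $p=-1$, $q=-1$, $r=0$, $s=-1$. The lemma then forces $q+r\le p+s$, which here reads $-1\le-2$. This is false, so no such $U$ exists.

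Two small points need checking before this works.

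First, the lemma must apply to matrices $U$ with entries in $\Rbar$, possibly equal to $-\infty$. Its proof uses only max-plus arithmetic, so this is fine. The inequality is also not vacuous: every entry of $B$ is finite, so $q+r=-1$ is finite. The inequality $b+c+2m\le p+s$ from the lemma's proof therefore still holds, with finite values on both sides.

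Second, I would note why the corollary also rules out embedding. If $\B$ embedded into a one-parameter semigroup $\B^t$ of Bellman operators on the two-point space, then $\B^{1/2}$ would be a Bellman operator with a max-plus matrix $U$ satisfying $U^{\otimes2}=B$. This is impossible by the argument above.

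No step is a genuine obstacle; the only care needed is matching entries to positions in the lemma's notation.
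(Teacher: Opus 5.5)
Your proof is correct and is the paper's own argument: you substitute the entries of $B$ into Lemma~\ref{lem:idem-square} and observe that $q+r=-1>-2=p+s$. Your extra checks, that the lemma allows entries equal to $-\infty$ and that the conclusion rules out embedding, are valid but not needed for the corollary itself.
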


\begin{proof}
Indeed, $q+r=-1>-2=p+s$.
\end{proof}
\par\end{Small}

\subsection{Maslov dequantization at work again}

The preceding example may appear artificial, but the inequality
\eqref{idem-square} has a natural interpretation as the idempotent
counterpart of the nonnegativity of the determinant of a matrix
square. Although subtraction is unavailable in the max-plus
semiring, the comparison between the two determinant monomials
survives:
$
q\otimes r\le p\otimes s,
$
or equivalently $q+r\le p+s$.

This observation extends to arbitrary finite dimensions by Maslov
dequantization (see \secref{sec:dequantization}).

For an ordinary nonnegative $n\times n$ matrix $V=(v_{ij})$,
the Cauchy expansion of its determinant can be separated into
the contributions of even and odd permutations:
\begin{equation}\label{tropical-square}
\det V=D_+(V)-D_-(V),
\end{equation}
where the sign $_+$ corresponds to even permutations $\sigma$ and the sign $_-$ to the odd ones:
\begin{equation*}
D_\pm(V)=
\sum_{\substack{\sigma\in S_n\\
                 \sigma\text{ even/odd}}}
\prod_{i=1}^n v_{i\sigma(i)}.
\end{equation*}
In particular, if $V$ is the square of a real matrix, then
$D_+(V)\ge D_-(V)$.

For a max-plus matrix $V=(v_{ij})$, define the corresponding
tropical expressions by
\begin{equation*}
D_\pm(V)=
\max_{\substack{\sigma\in S_n\\
                 \sigma\text{ even/odd}}}
\sum_{i=1}^n v_{i\sigma(i)}
=
\bigoplus_{\substack{\sigma\in S_n\\
                     \sigma\text{ even/odd}}}
\bigotimes_{i=1}^n v_{i\sigma(i)}.
\end{equation*}
Here, as usual, sums and maxima are understood in the extended
max-plus arithmetic.

\begin{Thm}\label{thm:tropical-square}
If $V=U^{\otimes2}$ for a finite-dimensional max-plus matrix $U$,
then
\begin{equation*}
D_-(V)\le D_+(V).
\end{equation*}
\end{Thm}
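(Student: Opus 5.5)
We prove the statement by Maslov dequantization (\secref{sec:dequantization}): replace $U$ by a family of ordinary nonnegative matrices, apply the classical inequality $D_+\ge D_-$ for squares of real matrices, and pass to the limit $h\to0+$. This is the same mechanism as in the Perron--Frobenius remark. The point is that both $D_+(V)$ and $D_-(V)$ are max-plus polynomials in the entries of $U$ with no cancellation. Their dequantized counterparts are therefore sums of pure exponentials, and the logarithmic limit of such a sum is exact.

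\emph{Step 1 (quantized matrices).} Write $U=(u_{ij})$ with $u_{ij}\in\Rbar$. For $h>0$ put $U_h=(\exp(u_{ij}/h))$, with the convention $\exp(-\infty/h)=0$, so $U_h$ is a real nonnegative matrix. Set $V_h=U_h^2$. Then $\det V_h=(\det U_h)^2\ge0$, and by \eqref{tropical-square} this gives $D_+(V_h)\ge D_-(V_h)$ for every $h>0$, where $D_\pm$ now denote the classical even/odd parts of the determinant.

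\emph{Step 2 (expansion into exponentials).} I will expand $D_\pm(V_h)$ completely. Since $(V_h)_{ij}=\sum_k\exp\bigl((u_{ik}+u_{kj})/h\bigr)$, we get
\begin{equation*}
D_\pm(V_h)=\sum_{\sigma\ \mathrm{even/odd}}\ \sum_{k_1,\dots,k_n}\exp\Bigl(\tfrac1h\sum_{i=1}^n\bigl(u_{ik_i}+u_{k_i\sigma(i)}\bigr)\Bigr).
\end{equation*}
This is a sum of $N\le n!\,n^n/2$ terms of the form $\exp(c/h)$. Here $c$ ranges over a finite multiset $C_\pm\subset\Rbar$, and terms with $c=-\infty$ vanish. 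By distributivity of $\otimes$ over $\oplus$, the maximum of $C_\pm$ is exactly
\begin{equation*}
\max_{\sigma\ \mathrm{even/odd}}\sum_i\max_k\bigl(u_{ik}+u_{k\sigma(i)}\bigr)=\max_{\sigma}\sum_i v_{i\sigma(i)},
\end{equation*}
which is the tropical $D_\pm(V)$ for $V=U^{\otimes2}$.

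\emph{Step 3 (the limit).} For a finite sum of exponentials we have the elementary bounds
\begin{equation*}
\max C\le h\log\sum_{c\in C}e^{c/h}\le\max C+h\log N.
\end{equation*}
Hence $h\log D_\pm(V_h)\to D_\pm(V)$ as $h\to0+$. If $D_-(V)=-\infty$ there is nothing to prove. Otherwise $D_-(V_h)>0$, so $D_+(V_h)>0$ as well. Taking $h\log$, which is monotone, gives $h\log D_-(V_h)\le h\log D_+(V_h)$, and in the limit $D_-(V)\le D_+(V)$.

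The main point requiring care is Step 2. One must check that the tropical quantities are the exact $h\to0$ limits, with no cancellation. This holds precisely because $D_\pm(V_h)$ are written as sums with nonnegative coefficients before any subtraction is performed, and because entries equal to $-\infty$ contribute zero terms rather than indeterminate ones. A purely combinatorial alternative would pair each odd $\sigma$ with a maximizing choice of intermediate indices $k_i$ and construct an even permutation of no smaller weight, generalizing the argument of Lemma~\ref{lem:idem-square}. However, the parity bookkeeping there is more delicate, so I would use the dequantization argument.
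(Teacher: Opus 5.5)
Your proof is correct and is essentially the paper's argument: you dequantize via $U_h=(e^{u_{ij}/h})$, use $\det U_h^2=(\det U_h)^2\ge0$ to get $D_+(V_h)\ge D_-(V_h)$, and pass to the limit $h\to0^+$. Your full expansion of $D_\pm(V_h)$ into pure exponentials and your separate handling of the case $D_-(V)=-\infty$ spell out what the paper compresses into ``applying the same limit to the two nonnegative sums.'' The count $N\le n!\,n^n/2$ fails for $n=1$, but this is harmless because only the finiteness of $N$ is used.
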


\begin{proof}
For $h>0$, associate with $U=(u_{ij})$ the nonnegative real
matrix
\begin{equation*}
A_h=(e^{u_{ij}/h}),
\qquad e^{-\infty}=0,
\end{equation*}
and put $V_h=A_h^2$. Since
$\det V_h=(\det A_h)^2\ge0$, the ordinary determinant
expansion gives
\begin{equation*}
D_+(V_h)\ge D_-(V_h),
\end{equation*}
where $D_\pm$ denote the ordinary sums of permutation monomials.

By the elementary log-sum-exp limit,
\begin{equation*}
h\log(V_h)_{ij}\longrightarrow
\max_k(u_{ik}+u_{kj})=v_{ij}
\end{equation*}
as $h\to0^+$, with $h\log0=-\infty$.
Applying the same limit to the two nonnegative sums
$D_\pm(V_h)$ yields
\begin{equation*}
h\log D_\pm(V_h)\longrightarrow D_\pm(V).
\end{equation*}
Passing to the limit in the preceding inequality proves
the claim.
\end{proof}

\begin{Cor}
A finite-dimensional max-plus matrix $V$ with
$D_-(V)>D_+(V)$ cannot be embedded into a one-parameter
semigroup of max-plus matrices.
\end{Cor}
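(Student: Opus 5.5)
The corollary follows from Theorem~\ref{thm:tropical-square} by contraposition. The plan is to show that embeddability forces $V$ to be a max-plus square, and then to invoke the theorem.

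Suppose $V$ embeds into a one-parameter semigroup $\{V^t\}_{t\ge0}$ of max-plus matrices of the same size, with $V^1=V$ and $V^{t+s}=V^t\otimes V^s$ for all $t,s\ge0$. Setting $U=V^{1/2}$, the semigroup property gives
\[
U^{\otimes2}=V^{1/2}\otimes V^{1/2}=V^{1}=V .
\]
So $V$ is the max-plus square of a finite-dimensional max-plus matrix $U$. Theorem~\ref{thm:tropical-square} then yields $D_-(V)\le D_+(V)$, which contradicts the hypothesis $D_-(V)>D_+(V)$.

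No continuity of the semigroup at $t=0$ is needed here. This differs from the linear examples, where continuity was essential to rule out non-invertible or orientation-reversing matrices. In the max-plus setting only the algebraic existence of a square root is used, and that follows from the semigroup law alone. In the same way, $V$ has no max-plus square root at all, which generalizes the two-point Corollary after Lemma~\ref{lem:idem-square}.

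There is no real obstacle. The one point to check is the meaning of ``embedding'': the time-$\tfrac12$ element must itself be a max-plus matrix, that is, a Bellman operator on the same finite state space. This holds by the definition of a semigroup of max-plus matrices acting on the same $X$.
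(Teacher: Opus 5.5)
Your proposal is correct and matches the paper's proof: an embedding supplies the max-plus matrix $U=V^{1/2}$ with $U^{\otimes2}=V$, which contradicts Theorem~\ref{thm:tropical-square}. Your remark that only the semigroup law is used, and not continuity at $t=0$, is also accurate.
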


\begin{proof}
Such an embedding would provide a max-plus square root
$U=V^{1/2}$, contradicting Theorem~\ref{thm:tropical-square}.
\end{proof}

\section*{In summa}
\begin{Small}

The first two sections revisit the theory of stationary
infinite-horizon optimization as it stood, for the most part,
some forty years ago. Section~\secref{sec:infinite-extremals} recalls how the problem of
defining infinite extremals in discrete time leads to the
spectral equation for Bellman operators. Section~\secref{sec:variations} turns to
continuous time, where the classical Euler--Lagrange and
Hamiltonian equations, together with the Bellman semigroup,
provide a geometric interpretation of the missing boundary
conditions at infinity in terms of invariant Lagrangian
manifolds.

Section~\secref{sec:hamiltonian-cascades} surveys the symplectic interpretation of discrete
Euler--Lagrange equations developed in the intervening decades,
notably through the discrete Legendre transformations of
Marsden and West and their successors. We pay particular
attention to the domains and ranges of these transformations:
the resulting Hamiltonian dynamics is generally a
pseudocascade, rather than a globally defined map which can
be iterated indefinitely.

Section~\secref{sec:hyperbolic} establishes a local counterpart of the continuous-time
hyperbolic picture. For strictly jointly concave discrete
Lagrangians, a hyperbolic stationary point possesses a stable
invariant Lagrangian germ whose generating function solves a
localized Bellman spectral equation. The germ admits a
continuation by the Hamiltonian correspondence, but this
does not in general guarantee a global graph over the entire
state space.

Finally, Section~\secref{sec:embedding} shows that even this close local parallel
between discrete and continuous Hamiltonian dynamics does not
ensure embeddability. A strictly jointly concave quadratic
Lagrangian already produces a hyperbolic symplectic cascade
whose linearization fails the Culver test. Independently, in
the max-plus setting, a tropical analogue of the determinant
obstruction shows that certain Bellman operators do not even
admit square roots. Thus neither the symplectic nor the
idempotent algebraic structure provides an automatic passage
from discrete to continuous time.

\subsection*{Some open problems}

The results obtained above, both positive and negative, are far from exhausting the embedding problem for stationary dynamic optimization. We conclude with two examples of questions left open, without attempting \emph{to embrace the unembraceable} (Kozma Prutkov, cited after Kantorovich \cite{Kantorovich1976}).

\subsubsection*{Different flavors of the embedding}

In fact, the embedding problem for stationary dynamic optimization can be formulated in at least three different ways:
\begin{enumerate}
 \item[(B)] Given a Bellman operator $\B\:\Rbar(X)\to\Rbar(X)$, find a one-parameter \emph{semi}group $\{\B^t\}_{t\in\R_+}$, continuous in $t$ with respect to a suitable operator topology, such that $\B=\B^1$ and $\B^0=\operatorname{id}$. 

 \item[(H)] Given a Hamiltonian \emph{invertible} cascade $\Phi\:M\to M$ on a smooth symplectic manifold with boundary, construct a \emph{continuous} one-parameter \emph{group} $\{\Phi^t\}_{t\in\R}$ on this manifold such that $\Phi^1=\Phi$.

 \item[(L)] Starting from $\Phi$ as above, construct a \emph{differentiable} one-para\-meter group $\{\Phi^t\}_{t\in\R}$ as above, generated by a smooth Hamiltonian vector field $V=V_H$ on $M$, and construct the respective con\-tinuous-time Lagrangian $L(x,v)$ so that
     $$
     b(x_0,y_1)=\max \int_0^1 L(u(t),\dot u(t))\,\d t,
     \qquad u(0)=x_0,\quad u(1)=y_1,
     $$
     where the maximum is taken over all absolutely continuous trajectories connecting the endpoints.
\end{enumerate}
As before, positive answers can realistically be expected only for sufficiently special Bellman operators and  Hamiltonian cascades. The distinction between (H) and (L) is essential: the former requires only continuity in the time parameter, whereas the latter requires differentiability and a Hamiltonian generator, together with a compatible variational representation.

\begin{Rem}
Problem (B) is formulated for semigroups rather than groups
for a good reason: already for finite $X$, every invertible
max-plus matrix is a generalized permutation matrix
\cites{butkovic,vasya}. Thus invertibility leaves only
deterministic transitions, with no genuine optimization.
\end{Rem}

\subsubsection*{Nonlinear obstructions to embeddability}

Construct a smooth, strictly jointly concave discrete Lagrangian
whose Hamiltonian pseudocascade has a hyperbolic stationary point
and whose linearization admits a \emph{real} Hamiltonian logarithm,
but which cannot be embedded, even locally near that point,
into an autonomous Hamiltonian flow.

\par\end{Small}

\end{document}